\documentclass{amsart}

\usepackage{comment}
\usepackage{xcolor}
\usepackage{amssymb}
\usepackage{graphicx}

\newcommand{\ri}{\mathrm{i}}

\newcommand{\diag}{\mathrm{diag}}

\theoremstyle{plain}
\newtheorem{theorem}{Theorem}[section]
\newtheorem{proposition}[theorem]{Proposition}
\newtheorem{lemma}[theorem]{Lemma}
\newtheorem{corollary}[theorem]{Corollary}

\theoremstyle{definition}
\newtheorem{definition}[theorem]{Definition} 
\newtheorem{example}[theorem]{Example}

\newtheorem{remark}[theorem]{Remark}

\begin{document}

\title[Resonant Spectral Submanifolds]{Existence and Regularity of Stable Resonant Spectral Submanifolds and Linearization Maps}
\author{Florian Kogelbauer}
\thanks{Corresponding Author: Florian Kogelbauer}
\address{Department of Mathematics, ETH Z\"{u}rich, R\"{a}mistrasse 101, 8092 Z\"{u}rich, Switzerland}
\email{floriank@ethz.ch}
\author{Rafael de la Llave}
\address{School of Mathematics, Georgia Institute of Technology, 30332-0160 Atlanta, Georgia}
\email{rafael.delallave@math.gatech.edu}

\keywords{Resonances, Spectral Submanifold, Linear Conjugacy, Logarithmic Polynomial, Polyhomogeneous Functions, Hartman Conjecture}

\subjclass[2020]{37D10, 37C05, 37C30}

\begin{abstract}
We prove the existence of $C^{r,1-}$-regular stable invariant manifolds and linear conjugacies for analytic maps near a hyperbolic fixed point in the presence of resonances. The regularity exponent $r$ depends on the minimal resonant index, while the H\"older exponent can be chosen arbitrarily close to $1$, i.e., $1-\varepsilon$ for any $\varepsilon>0$. As a consequence, we obtain the stable version of the Hartman conjecture for analytic systems with semisimple linearization: in the fully stable case the local conjugacy can be chosen $C^{1,1-\varepsilon}$ for every $\varepsilon>0$.\\
Our approach introduces a new class of functional expansions based on logarithmic polynomials, which enables the invariance equation to be solved explicitly and algorithmically to arbitrary order. The existence results are obtained via a fixed-point argument in a suitable Banach space. We further present several analytic examples that both illustrate the applicability of the theorem while demonstrating the necessity of its assumptions.
\end{abstract}

\maketitle

\section{Introduction}

Understanding the dynamics near an equilibrium is a classical problem in dynamical systems. Given a map or vector field with a fixed point, two closely related questions arise. The first is the regularity of a local change of variables that conjugates the nonlinear dynamics to its linear part. In the hyperbolic case, the Hartman--Grobman theorem gives a topological conjugacy \cite{grobman1959homeomorphism,hartman1960lemma,hartman1960local}, which prompts the more delicate question of how regular such a conjugacy can be. The second question is the existence and regularity of invariant manifolds tangent to invariant spectral subspaces of the linearization. In the parameterization method, these two questions can be formulated through closely related invariance equations.\\

More precisely, let $f:\mathbb{C}^s\to\mathbb{C}^s$ be a sufficiently regular map with $f(0)=0$, $A=Df(0)$ and let $X$ be an $A$-invariant subspace. In the parameterization method one seeks a parametrization $K$ and a reduced dynamics $R$ satisfying $f\circ K=K\circ R$. In this paper we study the more restrictive invariance-linearization equation obtained by prescribing the reduced dynamics to be the linear one, $R=A|_X$, so that
\begin{equation}\label{invariance_eq_intro}
        f\circ K=K\circ A|_X.
\end{equation}
We require $K$ to be tangent at the origin to the inclusion of $X$ into the ambient space. Thus, a solution simultaneously parametrizes an invariant manifold tangent to $X$ and linearizes the dynamics restricted to that manifold. Its regularity immediately implies the same regularity for the corresponding invariant manifold, but the converse need not hold: the image manifold may be smoother than a parametrization solving the invariance-linearization equation, because a loss of regularity may come from the additional requirement that the dynamics on the manifold be conjugated to its linearization rather than from the geometry of the manifold itself. This distinction is important below and is also the reason that it is useful to discuss invariant manifolds and the Hartman linearization problem together.\\

The purpose of the present paper is to understand the invariance--linearization problem \eqref{invariance_eq_intro} in the presence of resonances, in the sense of Definition \ref{def_resonances}. Rather than excluding the resonant orders at which the usual cohomological equations become singular, we enlarge the Taylor ansatz by logarithmic-polynomial terms and resolve the resulting equations recursively. The logarithmic approximation is then corrected to an exact solution by a fixed-point argument in the weighted logarithmic-series Banach space $\Gamma_N^d$, introduced in Section \ref{notation}. A realization estimate, separated from the coefficient-space fixed-point argument, gives convergence in $C^{r_*,1-}$. The exponent $r_*$ is a spectrum-dependent lower guarantee for the invariance-linearization parametrization, not in general the optimal regularity of its image. If the first relevant resonant forcing coefficient vanishes, the associated logarithmic term disappears and higher regularity may occur. In the full stable case the construction yields a $C^{1,1-\varepsilon}$ linearizing conjugacy for every $\varepsilon>0$. Classical results of Hartman and subsequent refinements guarantee differentiable or $C^{1,\beta}$ linearization under contraction or spectral hypotheses, with $\beta$ tied to the regularity and spectral condition, see, in particular, \cite{hartman1960local,newhouse2017hartman,zhang2017differentiability}. Our conclusion is a near-Lipschitz H\"older estimate for the derivative under the stronger analytic and semisimplicity hypotheses used here.\\

\subsection{Some Remarks on Previous Literature}\label{intro_literature}

The role of resonances is already visible in the classical linearization theorems. Sternberg and Chen showed that, under suitable non-resonance assumptions, a sufficiently smooth hyperbolic system is smoothly locally conjugate to its linear part \cite{chen1963equivalence,chen1965local,sternberg1957local}, see also the geometric formulation in \cite{banyaga1996cohomology}. In the analytic category, small divisors lead to arithmetic conditions of Siegel--Bruno type \cite{siegel1942iteration,bruno1971analytic}. With resonances, one generally expects a normal form rather than complete analytic linearization, since resonant terms cannot in general be removed by a near-identity change of variables \cite{arnold2012geometrical,carr2012applications}. In the full-dimensional hyperbolic problem, Hartman's differentiability results led to what is often called the \emph{Hartman conjecture} \cite{hartman1960lemma,hartman1960local,pugh1969theorem}. Related developments include bunching and invariant-foliation regularity \cite{hasselblatt1994periodic,pugh1997holder} and differentiable linearization results of Belitskii, Samovol, Stowe, Van Strien, Newhouse, and others \cite{belitskii1978equivalence,samovol2010polynomial,samovol2010new,stowe1988linearization,vanstrien1990smooth,newhouse2017hartman,zhang2017differentiability}. Resonant hyperbolic linearization has also been studied by Rayskin and by Bonckaert and Naudot \cite{rayskin1998alpha,bonckaert2003linearization}.\\

For invariant manifolds, the same cohomological mechanism appears, but the regularity of the invariant image must be distinguished from the regularity of a parametrization that also linearizes its internal dynamics. For non-resonant spectral subspaces, the parameterization method gives strong existence, regularity, and computational results \cite{Cab2003pam,Cab2003,CABRE2005444}, with related developments for difference equations and numerical implementations \cite{delaLlaveLomeli2012,gonzalez2022finite}. In the stable case, the spectral subspace associated with the least contracting eigenvalues gives the slow directions governing asymptotic approach to the fixed point \cite{Wayne97}. Several notions of slow invariant manifold coexist \cite{lorenz1986existence,lorenz1987nonexistence,lorenz1992slow}, and a precise theory under suitable non-resonance assumptions was developed in \cite{Llave97,Cab2003,CABRE2005444}. Internal resonances may be absorbed into a nonlinear reduced dynamics when only the invariant manifold is sought, whereas external resonances couple selected and transverse directions and can impose genuine regularity obstructions \cite{belitskii1978equivalence,samovol2010polynomial,samovol2010new,homburg2006invariant}. Our results treat the corresponding invariance-linearization problem without excluding these external resonances.\\

Logarithmic expansions themselves, sometimes called $\Psi$-series, have a long history and appear in the work of Horn and in the classical theory of ordinary differential equations \cite{horn1896ueber,hille1997ordinary,gavrilov1992non}. Related concepts occur in invariant-manifold and normal-form problems, for instance in work of Chaperon \cite{chaperon1986c,CHAPERON_2004}, and products of polynomials and logarithms appear in \cite{Mourtada91,bonckaert2003linearization}. In geometric singular analysis, mixed power-logarithmic expansions are known as polyhomogeneous expansions \cite{grieser2001basics}. The present paper brings this viewpoint to resonant invariance equations near hyperbolic fixed points and, crucially, combines it with a Banach-space convergence argument and a recursive computational scheme.\\

There is also a direct connection with resonant forcing in mechanical systems \cite{nayfeh2024nonlinear}. In Lyapunov's expansions near an equilibrium \cite{liapounoff1907probleme}, substitution of lower-order terms produces forcing whose exponential time dependence is determined by products of eigenmodes. When such a product has the same exponential rate as another eigenmode, the inhomogeneous linear equation is resonantly forced and acquires a polynomial factor in time. In spatial coordinates this mechanism produces the logarithmic terms used in the subsequent analysis and can be regarded as the flow analogue of the singular cohomological equations at resonant multi-indices.\\

The contribution of the present paper can therefore be summarized as follows. We allow external resonances in the invariance-linearization equation and resolve each singular cohomological equation by explicit logarithmic monomials. We prove convergence to an exact solution in the Banach space $\Gamma_N^d$, whose precise definition is given in Section \ref{notation}, and we derive the resonance-dependent regularity $C^{r_*,1-}$, including finer directional regularity. The recursive construction is algorithmic to arbitrary finite order and is directly amenable to symbolic power-series manipulation. We have carried it out by hand in several examples, while a systematic software implementation is left for future work. The logarithmic class is substantially more structured than a generic $C^{r,\alpha}$ class and is smooth away from the coordinate singularities responsible for the logarithms.\\
The restriction to linear reduced dynamics is a deliberate choice of our approach. Internal resonances can often be absorbed into a nonlinear reduced map if one seeks only an invariant manifold, but composition with a prescribed nonlinear reduced dynamics is not naturally compatible with the logarithmic Banach spaces used here. A finite-order logarithmic ansatz followed by a $C^r$ argument is possible in principle, but then one must balance smallness of the remainder against the regularity of the approximate solution. We leave this extension for future work. Likewise, the logarithmic class should not be identified with a generic $C^{r,\alpha}$ class: it carries additional algebraic information about the resonant variables and admits regularity properties not encoded by the single exponent $r$.\\

We use throughout the resonance terminology and the set $I_{\rm res}(\Lambda)$ from Definition \ref{def_resonances} in Section \ref{notation}. The main result is the following.

\begin{theorem}\label{mainthm}
Let $f:\mathbb{C}^s \to\mathbb{C}^s$ be an analytic map with $f(0)=0$, understood, when $f$ represents a real system, in complex eigencoordinates compatible with the underlying real structure. All differentiability assertions below are in the real sense. Let
\begin{equation}
A=Df(0),
\end{equation}
be semisimple and invertible.
Let $X_1$ be an $A$-invariant, $d$-dimensional subspace spanned by eigenvectors of $A$ associated to eigenvalues $\Lambda = \{\lambda_1,...,\lambda_d\}$ such that
\begin{equation}\label{asslambda}
|\lambda_j|< 1,\quad 1\leq j\leq d.
\end{equation}
Assume first that $I_{\rm res}(\Lambda)\neq\varnothing$. For each resonant multi-index $n\in I_{\rm res}(\Lambda)$, define
\begin{equation}\label{def_r_star}
    r(n):=
\min\left\{
|n|-1,\,
\min_{j\in\operatorname{supp}(n)} n_j
\right\} \geq 1,\quad r_*:=
\min_{n\in I_{\rm res}(\Lambda)} r(n).
\end{equation}
Then, on a sufficiently small neighborhood of the origin in $X_1$, there exists a parametrization $h$ of class $C^{r_*,1-\varepsilon}$ for every $\varepsilon\in(0,1)$ satisfying
\begin{equation}
        h(0)=0,\qquad Dh(0)=\iota_{X_1},\qquad f\circ h=h\circ A|_{X_1},
\end{equation}
where $\iota_{X_1}$ denotes the canonical inclusion embedding of $X_1$ into the ambient space. After restricting the domain, $h$ is an embedding and its image is an $f$-invariant manifold tangent to $X_1$. The parametrization admits logarithmic-polynomial approximations to arbitrary finite order. The stated regularity is a uniform lower bound on the regularity of this invariance-linearization parametrization. If $I_{\mathrm{res}}(\Lambda)=\varnothing$, the parametrization can be chosen analytic.
\end{theorem}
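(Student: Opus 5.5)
Work in eigencoordinates: $A=\diag(\lambda_1,\dots,\lambda_s)$, $X_1$ spanned by the first $d$ coordinate vectors, and $x\in X_1$ close to $0$. Write $h=\iota_{X_1}+\tilde h$ with $\tilde h=O(|x|^2)$. The invariance equation then becomes the fixed-point problem
\begin{equation*}
\tilde h(Ax)-A\tilde h(x)=N(x+\tilde h(x)),
\end{equation*}
where $N=f-A$ is analytic and $N=O(2)$. Note that $A$ acts on $X_1$ by $x\mapsto \Lambda x$.

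\textbf{Formal logarithmic expansion.} Expand $\tilde h$ in monomials $x^n$ with coefficients that are polynomials in the variables $\log x_j$ (understood as real logarithms of $|x_j|$ together with arguments in polar coordinates, compatibly with the real structure). The cohomological equation for the $k$-th component at multi-index $n$ has the form
\begin{equation*}
\Lambda^n c(\ell+\log\Lambda)-\lambda_k c(\ell)=\text{(forcing)},\qquad \ell=(\log x_j)_j .
\end{equation*}
If $\Lambda^n\ne\lambda_k$, it is solved uniquely among polynomials in $\ell$ of the same degree. This can be done by inverting $\Lambda^n T-\lambda_k$, where $T$ is the shift; on polynomials, $T=I+(\text{nilpotent})$. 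If $n\in I_{\rm res}(\Lambda)$ is resonant, one instead takes $c$ of one degree higher in $\ell$; the leading term is linear in the $\log x_j$ with $j\in\operatorname{supp}(n)$, and the difference operator hits it. This gives recursively a logarithmic polynomial $h_{\le M}$ solving the equation up to an error of order $O(|x|^{M+1}|\log|x||^{p_M})$.

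Because $|\lambda_j|<1$, only finitely many resonances occur below any order. Log degrees therefore grow at most linearly in the order, which is what the weighted space $\Gamma_N^d$ accommodates.

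\textbf{Correction step.} Set $h=h_{\le M}+g$ with $g=O(|x|^{M+1-\delta})$. Then $g$ solves
\begin{equation*}
g=A^{-1}\bigl[g\circ A|_{X_1}-(\text{error}) - (N(h_{\le M}+g)-N(h_{\le M}))\bigr],
\end{equation*}
or equivalently, by iterating forward,
\begin{equation*}
g=\sum_{m\ge0}A^{-m-1}E(\Lambda^m x)\quad\text{for } E=O(|x|^{M+1-\delta}).
\end{equation*}
The sum converges once $|\Lambda|_{\max}^{M+1-\delta}\|A^{-1}\|<1$, which holds for large $M$ since $A$ is invertible. In the space $\Gamma_N^d$ of weighted logarithmic series (or, more simply, functions with $\sup|x|^{-(M+1-\delta)}|D^j g|$ finite for $j\le$ some order), the operator is a contraction on a small ball of a small neighborhood. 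The nonlinearity is Lipschitz with a small constant because $N=O(2)$ and the domain is small. The same estimate, differentiated, bounds derivatives up to order $M$, since $\Lambda^m$ is linear and derivatives gain factors $\Lambda^{m}$ that help. So $g\in C^{M'}$ with $M'$ large. When $I_{\rm res}=\varnothing$, no logarithms arise and the classical majorant argument yields analyticity.

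\textbf{Regularity, the main obstacle.} The regularity of $h$ is therefore that of the finitely many logarithmic monomials $x^n P(\log x)$ with $n$ at or beyond resonant orders. For $n\in I_{\rm res}(\Lambda)$, a term $x^n\log x_j$ is $C^{r,1-\varepsilon}$ with:
\begin{itemize}
\item $r=\min_j n_j$, coming from derivatives concentrated in one variable $x_j$, since $x_j^{n_j}\log x_j$ is $C^{n_j-1,1-}$ in that variable when the other factors are bounded;
\item $r\le|n|-1$, coming from the total degree, because $x^n\log|x|$ is $C^{|n|-1,1-}$.
\end{itemize}
Higher powers of logarithms at higher orders only appear with larger $|n|$, so they do not lower the regularity. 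Hence $h\in C^{r_*,1-\varepsilon}$.

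I expect the careful verification of these Hölder bounds to be the hardest step, in particular handling mixed derivatives near the coordinate hyperplanes $x_j=0$ uniformly, and getting the complex-log/real structure right. I would treat it by homogeneity: scale to the unit sphere and estimate $D^{r}$ of $x^nP(\log x)$ by $|x|^{|n|-r}|\log|x||^p$ away from the axes, together with a one-variable analysis near the axes.

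Finally, $Dh(0)=\iota_{X_1}$ gives, after shrinking the domain, that $h$ is an embedding. The invariance equation then shows that the image is $f$-invariant.
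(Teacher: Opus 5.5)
There is a genuine gap, and it is in your choice of logarithmic variables.

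\textbf{The coordinate logarithms lose a derivative.} The variables $\log|x_j|$ do satisfy an exact shift relation, but they are singular along the whole hyperplane $\{x_j=0\}$. A resonant term $x^n\log|x_j|$ is therefore only $C^{n_j-1,1-}$. That is exactly what your first bullet says, and it does not give the claimed $r=\min_j n_j$; compare the remark around \eqref{eta_mod_xj}. For resonances with $|\operatorname{supp}(n)|\ge2$ and equal nonzero entries, this costs a derivative the theorem does not allow.

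Concretely, take $\lambda_1\lambda_2=\lambda_3$ and $n=(1,1)$, as in the three-dimensional example of Section \ref{examples}, so that $r_*=1$. Your recipe produces a third component $x_1x_2\bigl(c_0+c_1\log|x_1|+c_2\log|x_2|\bigr)$ with $c_1\log|\lambda_1|+c_2\log|\lambda_2|\neq0$. Its $x_1$-derivative contains $c_1x_2\log|x_1|$, which is unbounded near $\{x_1=0,\ x_2\neq0\}$. Its $x_2$-derivative contains $c_2x_1\log|x_2|$, unbounded near $\{x_2=0,\ x_1\neq0\}$. Avoiding both requires $c_1=c_2=0$, which contradicts the resonant equation. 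Hence $h\notin C^1$, and the $C^{1,1-}$ Hartman corollary is lost.

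Your second bullet uses the Euclidean $\log|x|$ instead. That function satisfies $\eta(Ax)=\eta(x)+b$ only if all $|\lambda_j|$ coincide, so it cannot solve the resonant cohomological equation. Adding $\arg x_j$ does not help either: the shift relation then holds only modulo $2\pi$.

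\textbf{The missing idea.} The paper uses an anisotropic power function: $\eta=\log\mathfrak p$ with $\mathfrak p(x)=\sum_{j\in\operatorname{supp}(n)}p_j|x_j|^{\nu_j}$ and $\nu_j=\log\gamma/\log|\lambda_j|$. Then $\mathfrak p(Ax)=\gamma\,\mathfrak p(x)$, which gives the exact shift by $\log\gamma$. At the same time the singular set shrinks to the common zero set of the coordinates in $\operatorname{supp}(n)$. Taking $\gamma$ small so that $\nu_j>r+1$, Lemma \ref{lemma_regularity} gives $x^nQ(\eta)\in C^{r(n),1-}$ for every polynomial $Q$. For $d=1$ the two choices of logarithm coincide, which is why one-dimensional examples hide the problem.

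\textbf{The correction step overclaims.} The forcing $E$ contains the truncation residual and $N(h_{\le M}+g)-N(h_{\le M})$, and both carry the logarithmic factors of $h_{\le M}$. The weighted bounds $|D^jE(y)|\lesssim|y|^{M+1-\delta-j}$ therefore hold at best for $j\le r_*$. With coordinate logarithms they can fail already for $j=1$, for instance through terms $x_1x_2^k\log|x_1|$, which are not $C^1$ for any $k$. So ``larger $|n|$'' alone does not restore regularity, and $g\in C^{M'}$ for large $M'$ is not available. The contraction has to be run in a weighted $C^{r_*,\alpha}$ space, and checking the bounds on $E$ there uses exactly the estimates in the proof of Lemma \ref{lemma_regularity}.

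With the power-function logarithms, this weighted $C^{r_*,\alpha}$ contraction via $g=\sum_{m\ge0}A^{-m-1}E\bigl((A|_{X_1})^mx\bigr)$ would be a legitimate and fairly direct alternative to the paper's route. The paper instead contracts in the coefficient space $\Gamma_N^d$ (Propositions \ref{Lemma_SM} and \ref{propfixedpoint}). It only afterwards realizes the tail in $C^{r,\alpha}$, through the factorization $x^n\eta^q=x^{p_{n,q}}\prod_\ell\bigl(x^{m^{(\ell)}}\eta_\ell\bigr)^{q_\ell}$, the multiplicity condition \eqref{log-support-multiplicity}, and Lemma \ref{lemma_coefficient_estimate}.
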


\begin{remark}
Theorem \ref{mainthm} is formulated for the strictly stable part of the spectrum as expressed in assumption \eqref{asslambda}. Of course, the same conclusions hold true if we consider the strictly unstable part of the spectrum and  $f^{-1}$ instead of $f$, i.e., assumption \eqref{asslambda} can be reversed.\\
\end{remark}

\begin{remark}
The semisimplicity assumption is used in this paper to keep the spectral decomposition and the logarithmic cohomology equations transparent. The same strategy is expected to extend to general invertible $A$ after replacing eigendirections by generalized eigenspaces and keeping track of the additional nilpotent terms. We do not use that extension in the present proof and therefore state the theorem only in the semisimple setting.
\end{remark}

As explained above, the full-dimensional choice $X_1=\mathbb{C}^s$ turns the invariance-linearization equation into the usual local linearization problem. Theorem \ref{mainthm} therefore yields the following stable Hartman statement under the hypotheses used here.

\begin{corollary}\label{cor_Hartman}
    Under the assumptions of Theorem \ref{mainthm}, if $X_1$ is the full strictly stable space, then $f$ is locally $C^{1,1-\varepsilon}$-conjugate to its linearization for every $\varepsilon\in(0,1)$.
\end{corollary}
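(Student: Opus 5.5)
The plan is to apply Theorem \ref{mainthm} with $X_1$ equal to the full strictly stable space. The result is then upgraded from an invariance--linearization parametrization to a genuine local conjugacy, and the regularity bound $r_*\ge 1$ is read off.

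\emph{Step 1: setup in the fully stable case.} Since the Hartman statement concerns the fully stable setting, I take all eigenvalues of $A$ inside the unit disk, so $X_1=\mathbb{C}^s$ and $d=s$. (If $A$ also had unstable eigenvalues, the conjugacy statement would be for $f$ restricted to its stable manifold. The argument below is then carried out in the parametrization domain, with the stable manifold playing the role of the ambient space.) Theorem \ref{mainthm} yields $h$ on a neighborhood $U$ of $0\in X_1$ with $h(0)=0$, $Dh(0)=\iota_{X_1}$, and $f\circ h=h\circ A|_{X_1}$. Moreover $h\in C^{r_*,1-\varepsilon}$ for every $\varepsilon\in(0,1)$ when $I_{\rm res}(\Lambda)\neq\varnothing$, and $h$ is analytic otherwise.

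\emph{Step 2: regularity index.} By \eqref{def_r_star}, $r(n)\ge 1$ for every resonant $n$, so $r_*\ge1$. Hence $h\in C^{1,1-\varepsilon}$ in either case; an analytic $h$ is trivially $C^{1,1-\varepsilon}$ on a smaller compact neighborhood. If $r_*>1$, I would note that $C^{r_*,1-\varepsilon}\subset C^{1,1-\varepsilon}$ locally, so the corollary's regularity claim holds in all cases.

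\emph{Step 3: conjugacy.} Since $Dh(0)$ is the identity when $X_1=\mathbb{C}^s$, the inverse function theorem in the $C^1$ category makes $h$ a $C^1$ diffeomorphism from a small ball $B\subset U$ onto a neighborhood $V$ of $0$. The invariance equation then reads $h^{-1}\circ f\circ h=A$ on $B$. This makes sense because $A$ is a contraction in a suitable adapted norm: $A(B)\subset B$ for a ball in that norm, hence $f(h(B))=h(AB)\subset V$. The remaining point, and the only one needing care, is that $h^{-1}$ is again $C^{1,1-\varepsilon}$. I would write $D(h^{-1})=(Dh)^{-1}\circ h^{-1}$. Matrix inversion is smooth near the identity, and $Dh$ is $(1-\varepsilon)$-Hölder. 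Composing with the Lipschitz map $h^{-1}$ preserves Hölder continuity with the same exponent. So $D(h^{-1})$ is $(1-\varepsilon)$-Hölder on $V$, after shrinking $V$ so that $\|Dh-I\|<1/2$.

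For every $\varepsilon$ the same $h$ works, since Theorem \ref{mainthm} provides a single $h$ that lies in all these classes. This gives the local $C^{1,1-\varepsilon}$ conjugacy $f=h\circ A\circ h^{-1}$ for every $\varepsilon\in(0,1)$. In the real case, the real structure of the eigencoordinates makes $h$ real, so the conjugacy is real as well.
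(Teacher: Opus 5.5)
Your proposal is correct and follows the paper's own route: take $d=s$ in Theorem~\ref{mainthm}, use $r_*\geq 1$ to get $h\in C^{1,1-\varepsilon}$ for every $\varepsilon$, and invert $h$ near the origin by the inverse function theorem since $Dh(0)=I$. You also spell out two points the paper leaves implicit: $h^{-1}$ inherits a $(1-\varepsilon)$-H\"{o}lder derivative via $D(h^{-1})=(Dh)^{-1}\circ h^{-1}$, and the contraction $A$ keeps a small ball invariant, so $f=h\circ A\circ h^{-1}$ is well defined on a neighborhood of the origin.
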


Corollary \ref{cor_Hartman} is a stable, analytic, semisimple differentiable-linearization statement. It is useful to distinguish it from the various formulations historically grouped under the Hartman conjecture. Hartman's contraction theorem gives differentiable linearization and, in finite dimensions, $C^{1,\beta}$ regularity for some $\beta>0$, while later work treats weaker smoothness and more general hyperbolic spectral configurations \cite{hartman1960local,newhouse2017hartman,zhang2017differentiability}. Under our stronger analytic and semisimplicity assumptions, the logarithmic construction gives the same conjugacy in $C^{1,\alpha}$ for every $\alpha<1$. We do not claim here a corresponding result for mixed stable--unstable spectra.\\

Our proof has two steps. First, we solve the cohomological equations recursively and construct logarithmic-polynomial approximate solutions to arbitrarily high order. Second, after all resonant orders have been incorporated, we correct the approximation by a contraction on a closed ball of the tail space $\Gamma_N^d$. The coefficient-space topology supplies the algebra and composition estimates needed for the functional equation, while the explicit power-logarithmic form yields the $C^{r_*,1-}$ realization and the finer directional regularity.\\

The paper is organized as follows. Section \ref{notation} collects notation and introduces logarithmic polynomials and the Banach spaces used in the proof. Section \ref{prelim} formulates the invariance and linearization problem and discusses the role of outer resonances. Section \ref{approxsol} constructs approximate logarithmic-polynomial solutions recursively, while Section \ref{existence} proves the existence and regularity theorem by the fixed-point argument. Section \ref{examples} gives examples illustrating the hypotheses and sharpness, while Section \ref{conclusion} contains concluding remarks and extensions.

\section{Notation and Basic Definitions}\label{notation}

Throughout the paper, $s$ will be the dimension of the ambient space, while $d\leq s$ will be the dimension of an invariant subspace or manifold. We denote the $n^{th}$ unit vector in the vector space $\mathbb{C}^s$, e.g., the vector with all zero entries and one at position $n$, as $1@n$. For a multi-index $n\in\mathbb{N}^d$, $n=(n_1,..,n_d)$, let
%we define the\emph{depth of $n$} as\begin{equation}\depth(n)=\max_{1\leq i\leq d} n_i,\end{equation} and let
\begin{equation}
    |n| = \sum_{j=1}^d n_j,
\end{equation}
denote its order and let
\begin{equation}
    \operatorname{supp}(n) = \{j: n_j\neq 0\},
\end{equation}
denote its support.\\
For a vector $x=(x_1,...,x_d)$, we denote the $(d-1)$-dimensional vector whose $x_k$-component has been removed as $\underline{x}_k$. We denote the $d\times d$ diagonal matrix with elements $a_1,...,a_d$ as $\diag(a_1,...,a_d)$. The ball of radius $r$ in $\mathbb{R}^d$ is denoted as $B^r({\mathbb{R}^d})$, while the unit ball in $\mathbb{C}^d$ is denoted as $B^r({\mathbb{C}^d})$. When the distinction between real and complex vector spaces are clear from the context, we simply write $B^r_d$.\\
We denote the $M$-dimensional complex strip of width $\sigma$ as 
\begin{equation}\label{def_strip}
    \mathfrak{T}_\sigma = \{z\in \mathbb{C}^M: |\Im(z_j)|\leq \sigma,\quad 1\leq j \leq M \}. 
 \end{equation}
%For a complex function $H:\mathbb{C}^d\to\mathbb{C}^s$, we denote the real derivative, regarding $H$ as a function from $\mathbb{R}^{2d}$ to $\mathbb{R}^{2s}$, as $D_\mathbb{R}H$.
Let $\alpha\in (0,1]$. A function $f:U\to\mathbb{R}^{d_2}$ defined on an open subset $U\subseteq\mathbb{R}^{d_1}$ is $\alpha$-H\"{o}lder continuous if
\begin{equation}
    |f(x)-f(y)|\leq C_U |x-y|^\alpha,\quad x,y\in U,
\end{equation}
for some constant $C_U$ depending on the domain. If $\alpha=1$, the function is Lipschitz continuous. A function $f\in C^{k,\alpha}(U,\mathbb{R}^{d_2})$ is $k$-times differentiable with $k^{th}$ derivative $\alpha$-H\"{o}lder continuous. We write $C^{r,1-}$ for the class of functions $C^{r,1-\varepsilon}$ for all $\varepsilon\in (0,1)$, i.e.,
\begin{equation}\label{def_C_1_minus}
    C^{r,1-} = \bigcap_{\varepsilon\in (0,1)} C^{r,1-\varepsilon}.
\end{equation}
While each $C^{r,\alpha}$ is a Banach space under the H\"{o}lder norm 
\begin{equation}\label{def_C_r_norm}
\|f\|_{C^{r,\alpha}(\Omega)}
=
\sum_{|\beta|\leq r}\|D^\beta f\|_{L^\infty(\Omega)}
+
\sum_{|\beta|=r}
\sup_{\substack{x,y\in\Omega\\ x\neq y}}
\frac{|D^\beta f(x)-D^\beta f(y)|}{|x-y|^\alpha},
\end{equation}
where $f:\Omega\to\mathbb{R}^d$ for $\Omega\subseteq\mathbb{R}^d$ open and connected, the space \eqref{def_C_1_minus} is only a Fr\'{e}chet space.\\
Let $\mathcal{P}_{d,K}$ denote the space of polynomials in $d$ variables of degree at most $K$ for which  
\begin{equation}
\dim \mathcal{P}_{d,K} = \binom{K+d}{d}.
\end{equation}
We write
\begin{equation}
\mathcal{P}_d = \bigcup_{K = 0}^\infty  \mathcal{P}_{d,K}
\end{equation}
for the vector space of polynomials in $d$ variables.  \\
A \emph{power function} is a generalized polynomial expression $\mathfrak{p}:\mathbb{C}^d\to \mathbb{R}^+$ of the form\footnote{We choose to denote power functions with Gothic fonts to cleanly distinguish them from polynomial expressions and avoid potential confusions.}
\begin{equation}\label{defpower}
    \mathfrak{p}(x) = \sum_{j=1}^d p_j  |x_j|^{\nu_j},
\end{equation}
where $\nu_j>0$ and $p_j>0$. If all the $\nu_j$ are non-negative even integers, power functions reduce to a particular class of ordinary polynomials.\\
For an operator $T:X\to Y$ defined on Banach spaces $(X,\|.\|_X)$ and $(Y,\|.\|_Y)$, we write
\begin{equation}
    \|T\|_{\rm op} = \sup_{\|x\|_{X}=1} \|Tx\|_{Y},
\end{equation}
for its operator norm and 
\begin{equation}
    \sigma(T) = \{\lambda \in \mathbb{C}: T-\lambda \text{ Id} \text{ is not bijective with a bounded inverse}\},
\end{equation}
for its operator spectrum.

\begin{definition}\label{def_resonances}
Given an $s\times s$ matrix $A$ over the complex numbers, let $\sigma(A)\subset\mathbb{C}$ denote its spectrum and $\lambda=(\lambda_1,\ldots,\lambda_s)$ its eigenvalues, counted with multiplicity. We call a multi-index $n\in\mathbb{N}^s$ with $|n|\geq 2$ \emph{resonant} if there exists an eigenvalue $\widetilde{\lambda}$ of $A$ such that
\begin{equation}\label{lambdares}
    \lambda^n=\widetilde{\lambda}.
\end{equation}
If a multi-index is not resonant, we call it \emph{non-resonant}.\\

Let $\Lambda=\{\lambda_1,\ldots,\lambda_d\}\subseteq\sigma(A)$ be a subset of the spectrum of $A$. For a multi-index $n\in\mathbb{N}^d$ with $|n|\geq 2$, we write
\begin{equation}
    \lambda_\Lambda^n:=\lambda_1^{n_1}\cdots\lambda_d^{n_d}.
\end{equation}
We call $n$ \emph{resonant with respect to $\Lambda$} if there exists $1\leq k\leq s$ such that
\begin{equation}\label{lambdaresLambda}
    \lambda_\Lambda^n=\lambda_k.
\end{equation}
Such a resonance is called an \emph{internal resonance} or \emph{inner resonance} relative to $\Lambda$ if $\lambda_k\in\Lambda$, otherwise it is called an \emph{external resonance} or \emph{outer resonance}. For a given subset $\Lambda=\{\lambda_1,\ldots,\lambda_d\}\subseteq\sigma(A)$, we denote the set of all its resonant indices by
\begin{equation}\label{def_I_res}
    I_{\rm res}(\Lambda)
    =
    \left\{
        n\in\mathbb{N}^d :
        |n|\geq2,\quad
        \lambda_\Lambda^n=\lambda_k
        \text{ for some } 1\leq k\leq s
    \right\}.
\end{equation}
\end{definition}

\begin{lemma}\label{lemma_finite_resonances}
Under \eqref{asslambda} and the invertibility of $A$, the set $I_{\rm res}(\Lambda)$ is finite.
\end{lemma}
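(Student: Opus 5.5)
The plan is to bound the modulus of $\lambda_\Lambda^n$ by a quantity that decays geometrically in $|n|$. Then only finitely many orders $|n|$ can produce a product matching one of the finitely many eigenvalues of $A$.

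Set
\begin{equation*}
\mu:=\max_{1\leq j\leq d}|\lambda_j|<1, \qquad m:=\min_{1\leq k\leq s}|\lambda_k|>0.
\end{equation*}
Here $\mu<1$ by \eqref{asslambda}, since $\Lambda$ is a finite set. Also $m>0$ because $A$ is invertible, so $0\notin\sigma(A)$. For any multi-index $n\in\mathbb{N}^d$ we have
\begin{equation*}
|\lambda_\Lambda^n|=\prod_{j=1}^d|\lambda_j|^{n_j}\leq \mu^{|n|}.
\end{equation*}
If $n\in I_{\rm res}(\Lambda)$, there is some $k$ with $\lambda_\Lambda^n=\lambda_k$, and hence $m\leq|\lambda_k|=|\lambda_\Lambda^n|\leq\mu^{|n|}$.

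When $\mu>0$, taking logarithms turns this into
\begin{equation*}
|n|\leq \frac{\log m}{\log\mu}=:N_0,
\end{equation*}
a finite bound, because $\log\mu<0$ and $\log m\leq 0$. Note that $m\leq\mu<1$, since every $\lambda_j\in\Lambda$ is an eigenvalue of $A$. When $\mu=0$, every $\lambda_j$ vanishes, which invertibility of $A$ rules out; this case therefore cannot occur. (Alternatively, $\lambda_\Lambda^n=0\neq\lambda_k$ would leave no resonances at all.)

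It follows that $I_{\rm res}(\Lambda)\subseteq\{n\in\mathbb{N}^d: 2\leq|n|\leq N_0\}$. This set has at most $\sum_{K\leq N_0}\binom{K+d-1}{d-1}<\infty$ elements, which gives finiteness. There is no genuine obstacle in this argument. The only points needing care are that invertibility of $A$ is exactly what makes $m>0$, and strict contraction is what makes $\mu<1$. If either hypothesis fails, infinitely many resonances can occur; for example, a zero eigenvalue outside $\Lambda$ is matched by no product, but $\lambda_j=0$ inside $\Lambda$ would give $\lambda_\Lambda^n=0$ for all $n$.
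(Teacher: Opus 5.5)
Your argument is correct and is exactly the paper's proof: you bound $|\lambda_\Lambda^n|\le\rho^{|n|}$ with $\rho=\max_j|\lambda_j|<1$, use invertibility of $A$ to get $m=\min_k|\lambda_k|>0$, and conclude $|n|\le\log m/\log\rho$. One harmless slip is that your parenthetical about the case $\mu=0$ is backwards: there $\lambda_\Lambda^n=0=\lambda_1$, so every $n$ would be resonant rather than none, but invertibility excludes that case anyway, as you note.
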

\begin{proof}
Set $\rho=\max_{1\leq j\leq d}|\lambda_j|<1$ and $m=\min_{\lambda\in\sigma(A)}|\lambda|>0$. If $n\in I_{\rm res}(\Lambda)$, then $m\leq|\lambda_\Lambda^n|\leq\rho^{|n|}$. Hence $|n|\leq (\log m)/(\log\rho)$, with the inequality interpreted in the usual way since $\log\rho<0$. Thus only finitely many multi-indices can be resonant.
\end{proof}

\begin{remark}
In the following, the results are written in complex eigencoordinates for notational convenience and are intended for real systems after complexification: the real linear part $A$ has non-real eigenvalues in conjugate pairs, and all differentiability assertions are understood in the real sense. For such systems, one can always re-write the complex Jordan normal form by using two-by-two real block matrices of the form
\begin{equation}
    \begin{pmatrix} \Re(z)& -\Im(z) \\  \Im(z) & \Re(z)  \end{pmatrix} ,
\end{equation}
for the complex variable $z$. This is equivalent to regarding the complex space $\mathbb{C}^d$ as $\mathbb{R}^{2d}$. The use of the complex notation simplifies the typography of 
many algebraic formulas. On the other hand, bearing in mind that complex derivatives are different as compared to real derivatives, we will always understand the regularity of functions in the real sense.
\end{remark}

\subsection{Logarithmic Series}

Before we state the problem of invariance, conjugacy and linearization, we introduce some classes of functions and appropriate Banach spaces associated with them. An expression of the form
\begin{equation}\label{deflogpoly}
\sum_{1\leq |n|\leq N} H_n(\log|\phi_1(x)|,...,\log|\phi_M(x)|)x^n,
\end{equation}
where $\phi_1,...,\phi_M$ are H\"{o}lder continuous functions in $x$ in a neighborhood of the origin and $H_n$ are polynomial functions is called \emph{logarithmic polynomial of degree $N$}. For $N=\infty$, an expression of the form \eqref{deflogpoly} is called \emph{logarithmic series}. The functions $\phi_j$ are called \emph{basis functions}, while $\eta_j:=\log|\phi_j|$ will be called the corresponding \emph{logarithmic variables}.

\begin{remark}
    Typically, it is assumed that the functions $\phi_j$ are at most of power-law growth at infinity,
\begin{equation}\label{polybound}
    \phi(x) = \mathcal{O}(|x|^\nu),\quad x\to\infty,
\end{equation}
for some $\nu>0$. In the following, however, we will be mostly interested in the behavior of functions of the form \eqref{deflogpoly} around the origin, so we do not include the asymptotic bound \eqref{polybound} in the definition of a logarithmic series. We stress, on the other hand, that the functions $\mathfrak{p}_j$ we are using in the explicit construction of approximate solutions to the invariance equation in the presence of resonances will be power functions of the form  \eqref{defpower}, which, of course, satisfy \eqref{polybound}. Let us also remark the assumption of $\phi_j$ being H\"{o}lder continuous is not substantial for the subsequent arguments. Again, using power functions for $\phi_j$ of course implies a certain regularity, which is at least H\"{o}lder continuous.  
 \end{remark}

For the fixed-point argument appearing later in the paper, we will need appropriate function spaces that reflect the regularity of solutions as well as proximity to approximate solutions to the invariance equation, which will be given in terms of logarithmic polynomials.\\
For any $\beta,\sigma >0$, we define the following family of norms on the space of analytic functions\footnote{For the following definitions and arguments, it is actually irrelevant if we define the norm \eqref{defbetanorm} specifically on analytic functions,i.e., the completion of polynomials under a suitably chosen norm, since it will act on polynomials of a maximal degree exclusively.} $f:\mathbb{R}^d\to \mathbb{C}^s$, 
\begin{equation}\label{defbetanorm}
\|f\|_{\beta,\sigma} = \sup_{y\in \mathfrak{T}_\sigma} e^{-\beta |\Re y|}|f(y)|,
\end{equation}
see \eqref{def_strip}. Further, consider the vector space of series of the form 
\begin{equation}\label{def_H}
H = \sum_{|n|=1}^\infty H_n(\log|\phi_1|,...,\log|\phi_M|)x^n,\quad H_n:\mathfrak{T}_\sigma\to\mathbb{C}^s \text{ analytic },
\end{equation}
defined on the strip $\mathfrak{T}_\sigma\subset \mathbb{C}^M$ and introduce the following $l^1$-type norm
\begin{equation}\label{defnorm}
    \|H\|_{\Gamma_N^d,\beta,\sigma} = \sum_{|n| = N}^{\infty} \|H_n\|_{|n|\beta,\sigma}. 
\end{equation}

The space 
\begin{equation}
 C^\omega_{\beta,\sigma}(\mathbb C^M)
    :=
    \left\{
    f\text{ analytic in }  \mathfrak{T}_\sigma:
   \|f\|_{\beta,\sigma}<\infty
    \right\},
\end{equation}
is a Banach space and thus the space 
\begin{equation}\label{defGamma}
    \Gamma^d = \left\{ H \text{ is of the form \eqref{def_H} and }  \|H\|_{\Gamma_N^d,\beta,\sigma} <\infty \right\},
\end{equation}
is a Banach space as well. We denote the linear subspace of logarithmic series starting at order $N$ as 
\begin{equation}
    \Gamma^d_N = \left\{H = \sum_{|n|= N }^\infty H_n(\log|\phi_1|,...,\log|\phi_M|)x^n,\quad H \in \Gamma^d \right\},
\end{equation}
also regarded as functions defined on $\mathfrak{T}_\sigma\subset \mathbb{C}^M$.\\
The Banach space $\Gamma^d_N$ is even a Banach algebra: The norm \eqref{defnorm} is somewhat similar to the $l^1$-norm on the space of analytic functions.\\
Indeed, we first note that for two polynomials $p\in\mathcal{P}_{d,n}$, $q\in\mathcal{P}_{d,m}$ and $\beta = \beta_1+\beta_2$, we have that
\begin{equation}
\begin{split}
       \|qp\|_{\beta,\sigma} & = \sup_{y\in\mathfrak{T}_\sigma} e^{-\beta |\Re y|} |p(y)q(y)|\\
       & = \sup_{y\in\mathfrak{T}_\sigma} e^{-\beta_1 |\Re y|} |p(y)| e^{-\beta_2 |\Re y|}|q(y)|\\
    & \leq \|p\|_{\beta_1,\sigma}\|q\|_{\beta_2,\sigma}.
\end{split}
\end{equation}

%where the weight satisfies the sub-additivity property \begin{equation}    w(n+m)\leq w(n) w(m),\quad n,m\in\mathbb{N}^d,\end{equation}We denote by $\Gamma^d_{N,w}$ all elements in $\Gamma_N^d$ for which \eqref{defnorm} is finite.\\

Expanding two elements $H(x)$ and $\tilde{H}(x)$ in a logarithmic series,
\begin{equation}
    H(x) = \sum_{|n|\geq N} H_{n}(x) x^n,\quad \tilde{H}(x) = \sum_{|n|\geq N} \tilde{H}_{n}(x) x^n
\end{equation}
we have that 
\begin{equation}
\begin{split}
\|H\tilde{H}\|_{\Gamma_N^d,\beta,\sigma} & = \sum_{|k|\geq 2N}\left\|\sum_{n+m=k} H_{n}\tilde{H}_m\right\|_{\beta |k|,\sigma}\\
& \leq \sum_{|k|\geq 2N}\sum_{n+m=k} \|H_{n}\tilde{H}_m\|_{\beta |k|,\sigma} \\
& \leq  \sum_{|k|\geq 2N}\sum_{n+m=k} \|H_n\|_{\beta|n|,\sigma}  \|\tilde{H}_m\|_{\beta|m|,\sigma}  \\
 & \leq  \left(\sum_{|n|\geq N} \|H_n\|_{\beta|n|,\sigma}\right) \left(\sum_{|m|\geq N} \|\tilde{H}_m\|_{\beta|m|,\sigma}\right)\\
& \leq \|H\|_{\Gamma_{N}^d,\beta,\sigma}\|\tilde{H}\|_{\Gamma_{N}^d,\beta,\sigma},
\end{split}
\end{equation}
where we have used that $k=n+m$ allows us to split $\beta|k| = \beta|n|+\beta|m|$.

\begin{remark}
    The norm \eqref{defnorm} only measures the polynomial expressions $H_n$ and does not take into account the regularity of the composed map
\begin{equation}\label{Hncomplog}
        x\mapsto H_n(\log|\phi_1(x)|,...,\log|\phi_M(x)|).
    \end{equation}
Indeed, the regularity of the function \eqref{Hncomplog} and hence of sums and series expressions of the form \eqref{deflogpoly} depends on the specific choice of functions $\phi_j$, as detailed in the following section. For the resolution of the cohomology equation in the presence of resonances, the basis functions will be of genuinely lower order compared to polynomials and thus the separation in a polynomial and a logarithmic component as measured by the norm \eqref{defnorm} is meaningful.  %We further note that a given function $f$ might admit several series expansions as written in full generality in \eqref{deflogpoly}, as shown by $x^2 = \log(e^x)x$. For the resolution of the cohomology equation in the presence of resonances, however, the basis functions will be of genuinely lower order compared to polynomials and thus the separation in a polynomial and a logarithmic component as measured by the norm \eqref{defnorm} is meaningful. 
\end{remark}

%\todo[inline, color = red]{Comment more on strangeness of the norm, choose eg $\mathfrak{p}_j(x) = e^{x_j}$, then the coefficient polynomials are actual polynomials again. We may write the function $f(x)=x^2$ as $f(x)  = \log(e^x)x $ \begin{equation}    \|f\|_{\Gamma^1,\beta} = \|1\|_{2\beta} = 1 \end{equation}$H_1(y) = y$\begin{equation}  \|f\|_{\Gamma^1,\beta} = \|H_1\|_{1\beta} = 1/\beta \end{equation}The idea is of course that the $H_n's$ are of lower order compared to $x^n$ in the following, the $H_n$ will be determined by algebraic constraints coming from the resonances, which have to be genuinely logarithmic terms of lower order. }

%\begin{remark}   Since the space of polynomials in $d$ variables of degree $n$ is finite-dimensional, every two metric on this space are equivalent, but the constants may depend on $d$ and $n$. \end{remark}

\section{Preliminaries and Formulation of the Problem of Linear Conjugacy}\label{prelim}

In this section, we precise the overall set-up for our analysis. We define the invariance-linearization equation and show the equivalence of the flow map formulation with the vector field formulation for differential equations.

\subsection{Formulation of the Problem}

We will be concerned with analytic maps $f:\mathbb{C}^s\to\mathbb{C}^s$ such that $f(0)=0$, write
\begin{equation}\label{defA}
A=Df(0),
\end{equation}
for the linear part at zero, and set
\begin{equation}
f(x)=Ax+N_f(x),
\end{equation}
where $N_f(x)=\mathcal{O}(|x|^2)$. For a given map $f:\mathbb{C}^s\to\mathbb{C}^s$ and an $A$-invariant $d$-dimensional subspace $X_1\subseteq\mathbb{C}^s$, the general parameterization method seeks an embedding $K:B^1_d\to\mathbb{C}^s$ and a reduced map $R:B^1_d\to X_1$ satisfying $f\circ K=K\circ R$. In this paper we first impose the linear choice $R=A|_{X_1}$ and denote the corresponding parametrization by $h:B^1_d\to\mathbb{C}^s$, where $B^1_d\subset X_1$. Thus we consider the \emph{invariance-linearization equation}:
\begin{equation}\label{invariance}
f(h(x))=h(Ax),
\end{equation}
where $A=Df(0)$ and $Ax$ has to be read as $A|_{X_1}x$. Equation \eqref{invariance} states that, on the complex manifold invariant by $f$ and parametrized by $h$, the map can be conjugated to its linearization at zero. Equation \eqref{invariance} simultaneously imposes invariance of the image of $h$ and linearization of the reduced dynamics on that image and is hence called the \emph{invariance-linearization equation}.\\
%The solution $h$ defines a map analogous to a conjugacy in linearization theorems and might thus be called \emph{partial linearization map}.\\
Inner and outer resonances are understood in the sense of Definition \ref{def_resonances}. Inner resonances do not substantially affect the regularity of the invariant manifold and can be resolved with nonlinear terms in the conjugated dynamics \cite{CABRE2005444}. Indeed, allowing for more general reduced dynamics 
\begin{equation}
    f(h(x)) = h(g(x)),
\end{equation}
where 
\begin{equation}\label{nonlinearreduced}
    g(x) = A|_{X_1}x + \mathcal{O}(|x|^2),
\end{equation}
allows for nonlinear contributions and thus resolves inner resonances. Outer resonances, however, do limit the regularity of the invariant manifolds around the origin. This obstruction is crucial in certain normal form calculations \cite{Neild20140404}.\\
In the following, the distinction between inner and outer resonances will be softened. The regularity of the invariant manifold in the presence of outer resonances will be determined by a minimal resonant index and we do not make a distinction between an inner or an outer resonance, since we are interested in conjugation to the linearized dynamics. If the minimal resonant index is, indeed, an inner resonance, the regularity of the invariant manifold could be improved by considering a non-linear reduced dynamics \eqref{nonlinearreduced}.\\
Throughout the paper, as mentioned before, we use complex coordinates, mostly generalized eigendirections, even if the initial dynamical system is real. This will allow us to use Jordan canonical forms in the proofs rather than conjugate to two-by-two real block matrices, which simplifies the formulas for the approximate solutions. We stress already at this point, however, that, even though the flow maps and the approximate invariant manifolds are defined for complex arguments, they will not be complex-differentiable, i.e. analytic,  in the presence of resonances. Indeed, we will see that the complex invariant manifolds obtained in the following are only finitely differentiable in the real sense in the presence of resonances.\\
We formulate our results for maps - the relation to differential equations is well-known and treated in Appendix \ref{sec_appendix}.

\section{Approximate Solution to the Invariance Equation by Logarithmic Series}\label{approxsol}

In this section, we construct approximate solutions to the invariance equations in terms of logarithmic polynomials. These will serve as a basis for the fixed-point argument in a later section. First, we prove some results on the invertibility of certain shift operators on the space of polynomials. Then, we introduce a special choice of basis function in the logarithmic series \eqref{deflogpoly}, namely the class of power functions \eqref{defpower}, and discuss their regularity properties. Finally, we show that the invariance equation \eqref{invariance} can be solved to any order for this choice of logarithmic series.\\

\subsection{Shifted Polynomials}

Before we can construct approximate solutions to the invariance equation in terms of logarithmic series, we need some preparatory results on shifted polynomials. We emphasize already at this point that Lemma \ref{shifted} gives a \emph{unique} solution in the absence of resonances, while it only guarantees the existence of \emph{a} solution in the presence of resonances. Indeed, the lack of uniqueness in the presence of outer resonances is due to this existence of a kernel of a certain operator acting on polynomials, as elaborated in the following. While we only need invertibility properties on shifts of polynomials in one variable, we prove the general case for completeness in the following. \\

For $a\in\mathbb{C}$ and $v\in\mathbb{R}^d$, let $T_v:\mathcal{P}_d\to\mathcal{P}_d$ denote the shift operator
\begin{equation}\label{def_shift_v}
    T_v[p](x)=p(x+v),
\end{equation}
and define the operator $S_{a,v}:\mathcal{P}_d\to\mathcal{P}_d$, $S_{a,v}=I-aT_v$, i.e.,
\begin{equation}\label{defSav}
    S_{a,v}[p](x) = p(x)- a p(x+v). 
\end{equation}

%Before we proceed with a more in-depth analysis of the operator $S_{a,v}$,
We observe that the operator norm of $T_v:\mathcal{P}_{d,n}\to\mathcal{P}_{d,n}$, where $\mathcal{P}_{d,n}$ is endowed with the norm $\|.\|_{\beta,\sigma}$, is bounded as 
\begin{equation}\label{boundnormTv}
    \|T_v\|_{\rm op} \leq e^{\beta |v|},
\end{equation}
independently of the degree of $p$. Indeed, we calculate
\begin{equation}
    |p(y+v)|e^{-\beta |y|} = \Big( |p(y+v)|e^{-\beta |y+v|} \Big)e^{\beta (|y+v|-|y|)},
\end{equation}
and using that $e^{\beta (|y+v|-|y|)}\leq e^{\beta |v|}$ by the triangle inequality, we can estimate
\begin{equation}
    \begin{split}
        \|T_v p\|_{\beta,\sigma} & = \sup_{y\in\mathfrak{T}_\sigma}  |p(y+v)|e^{-\beta |y|}\\
        & \leq \Big(\sup_{y\in\mathfrak{T}_\sigma}  |p(y+v)|e^{-\beta |y+v|}\Big) e^{\beta |v|}\\
        & \leq \|p\|_{\beta,\sigma} e^{\beta|v|}
    \end{split}
\end{equation}
which immediately implies the bound \eqref{boundnormTv}. 

\begin{lemma}\label{shifted}
 Consider the operator $S_{a,v}:\mathcal{P}_{d,K}\to\mathcal{P}_{d,K}$ equipped with the norm $\|.\|_{\beta}$ as defined in \eqref{defSav} and assume that $v\neq 0$. For $a\neq 1$, the spectrum of $S_{a,v}$ and its inverse are given by 
 \begin{equation}\label{sigmaSav}
 \sigma(S_{a,v}) = \{1-a\},\quad  \sigma(S_{a,v}^{-1}) = \left\{\frac{1}{1-a}\right\},
 \end{equation}
 independently of the order of the polynomials $K$ and the shift vector $v$.\\
 In particular, $S_{a,v}:\mathcal{P}_{d,K}\to\mathcal{P}_{d,K}$ is invertible for $a\neq 1$ and we can bound its inverse as 
     \begin{equation}\label{boundSavinvfull}
    \|S_{a,v}^{-1}\|_{\rm op} \leq \frac{1}{|1-a|}\frac{(q\Delta)^{K+1}-1}{q\Delta-1},
\end{equation}
where we have set
\begin{equation}
    q = \frac{|a|}{|1-a|},\quad \Delta =\|T_v-I\|_{\rm op}.
\end{equation}
If in addition
\begin{equation}\label{boundaevb}
   |a| e^{\beta|v|}<1, 
\end{equation}
we obtain the bound
\begin{equation}\label{boundSavinv}
    \|S_{a,v}^{-1}\|_{\rm op} \leq \frac{1}{1- |a| e^{\beta|v|}},
\end{equation}
while for
\begin{equation}
|a|^{-1} e^{\beta |v|}<1,
\end{equation}
we obtain the bound
\begin{equation}\label{boundSavbinv2}
\|S_{a,v}^{-1}\|_{\rm op}\leq \frac{1}{|a|e^{-\beta|v|}-1}.
\end{equation}
For $a=1$ and $v\neq 0,$ the operator $S_{a,v}:\mathcal{P}_{d,K+1}\to\mathcal{P}_{d,K}$ is surjective. 
\end{lemma}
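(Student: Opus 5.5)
The natural starting point is the identity $S_{a,v}=I-aT_v=(1-a)I-a(T_v-I)$. The key structural fact is that $D_v:=T_v-I$ is nilpotent on $\mathcal{P}_{d,K}$. For $v\neq 0$, $D_v$ strictly lowers the degree: $p(x+v)-p(x)$ has degree at most $\deg p-1$, since the top homogeneous parts cancel. Hence $D_v^{K+1}=0$ on $\mathcal{P}_{d,K}$. Consequently $S_{a,v}=(1-a)(I-\tfrac{a}{1-a}D_v)$ is $(1-a)$ times a unipotent operator. Its only eigenvalue is $1-a$, and the only eigenvalue of its inverse is $1/(1-a)$. This gives \eqref{sigmaSav}, independently of $K$ and $v$; on a finite-dimensional space, spectrum equals eigenvalue set.

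For the bound \eqref{boundSavinvfull}, invert via the finite Neumann series
\[
S_{a,v}^{-1}=\frac{1}{1-a}\sum_{j=0}^{K}\Big(\frac{a}{1-a}\Big)^jD_v^j ,
\]
which is exact by nilpotency. Taking operator norms with $\|D_v^j\|_{\rm op}\le\Delta^j$ gives
\[
\|S_{a,v}^{-1}\|_{\rm op}\le\frac{1}{|1-a|}\sum_{j=0}^K (q\Delta)^j ,
\]
which is the stated geometric sum. (If $q\Delta=1$, the right side is read as $K+1$.)

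For \eqref{boundSavinv}, use a different expansion, $S_{a,v}^{-1}=\sum_{j\ge0}a^jT_v^j$. By \eqref{boundnormTv}, $\|a^jT_v^j\|_{\rm op}\le (|a|e^{\beta|v|})^j$, since $T_v^j=T_{jv}$. So under \eqref{boundaevb} the series converges in operator norm. It inverts $I-aT_v$ because $(I-aT_v)\sum_{j=0}^{J}a^jT_v^j=I-a^{J+1}T_v^{J+1}\to I$. Summing the geometric series gives \eqref{boundSavinv}.

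For \eqref{boundSavbinv2}, write
\[
S_{a,v}=-aT_v\big(I-a^{-1}T_{-v}\big)
\quad\Longrightarrow\quad
S_{a,v}^{-1}=-a^{-1}T_{-v}\sum_{j\ge0}a^{-j}T_{-v}^j .
\]
Bounding each $T_{-v}$ by $e^{\beta|v|}$ gives
\[
\|S_{a,v}^{-1}\|_{\rm op}\le |a|^{-1}e^{\beta|v|}\big(1-|a|^{-1}e^{\beta|v|}\big)^{-1}=\big(|a|e^{-\beta|v|}-1\big)^{-1}.
\]

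One subtlety applies to both infinite expansions: $\|\cdot\|_{\beta,\sigma}$ weights with $e^{-\beta|\Re y|}$, so \eqref{boundnormTv} should be checked with $|\Re y|$. Since $v$ is real, the triangle inequality still gives $e^{\beta(|\Re y+v|-|\Re y|)}\le e^{\beta|v|}$, so the argument goes through.

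For $a=1$, $S_{1,v}=-D_v$ maps $\mathcal{P}_{d,K+1}$ into $\mathcal{P}_{d,K}$ by the degree-lowering property. Surjectivity is the main point to verify. I would argue by rotating coordinates so that $v=|v|e_1$. Then $D_v$ acts as a finite difference in $x_1$ alone. It suffices to show that each monomial $x_1^k m(\underline{x}_1)$ of total degree at most $K$ has a preimage of degree at most $K+1$. Take $x_1^{k+1}m/((k+1)|v|)$: its image equals the target plus lower powers of $x_1$ times $m$, so induction on $k$ finishes. An equivalent formulation: $D_v$ maps the graded pieces $x_1^{k+1}\mathcal{P}_{d-1,K-k}\to x_1^{k}\mathcal{P}_{d-1,K-k}$ with triangular, invertible leading coefficient $(k+1)|v|$. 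This counting argument, together with the rotation (harmless since surjectivity is linear-algebraic and norm-free), is the only step requiring care.
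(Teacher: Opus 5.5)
Your proof is correct. For the spectrum, the finite expansion behind \eqref{boundSavinvfull}, and the Neumann-series bounds \eqref{boundSavinv} and \eqref{boundSavbinv2}, you follow the paper essentially line by line. The paper obtains nilpotency of $T_v-I$ from $T_v=\exp(v\cdot\nabla)$ rather than from degree lowering. Its rewriting $\frac1a p(x-v)-p(x)=\frac1a q(x-v)$ is your factorization $S_{a,v}=-aT_v(I-a^{-1}T_{-v})$. Your remark that \eqref{boundnormTv} should be read with $|\Re y|$, and holds because real shifts preserve $\mathfrak T_\sigma$, correctly cleans up the paper's computation.

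The genuine difference is in proving surjectivity of $S_{1,v}:\mathcal P_{d,K+1}\to\mathcal P_{d,K}$. The paper factors $S_{1,v}=(v\cdot\nabla)\Sigma_v$ with $\Sigma_v=-\sum_{n=0}^{K}(v\cdot\nabla)^n/(n+1)!$ invertible, so the range of $S_{1,v}$ equals that of $v\cdot\nabla$. It then counts dimensions. The kernel of $v\cdot\nabla$ on $\mathcal P_{d,K+1}$ consists of polynomials in the $d-1$ transverse coordinates, of dimension $\binom{K+d}{d-1}$. Pascal's identity then gives rank $\binom{K+d}{d}=\dim\mathcal P_{d,K}$. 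You instead straighten $v$ to $|v|e_1$ and construct preimages by a triangular recursion in the power of $x_1$.

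The paper's count is coordinate-free. It also exhibits the kernel (polynomials constant along $v$), which accounts for the non-uniqueness of resonant solutions discussed just before the lemma. Your argument is constructive: it yields an explicit right inverse that can be implemented directly in the order-by-order scheme of Proposition \ref{approx}. That fits the paper's emphasis on an algorithmic construction.
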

\begin{proof}
%Let $T_v$ denote the shift operator such that $S_{a,v}=I-a T_v$ and 
Let $D_v=v\cdot\nabla$ denote the streaming operator with velocity $v$, i.e., the generator of $T_v$. The differential operator $D_v:\mathcal{P}_{d,K}\to\mathcal{P}_{d,K}$ is well-defined and satisfies $D_v^{K+1}=0$ and hence
\begin{equation}
    T_v = \exp(D_v) = \sum_{n=0}^K \frac{D^n_v}{n!}.
\end{equation}
The operator $T_v:\mathcal{P}_{d,K}\to\mathcal{P}_{d,K}$ is thus the sum of the identity plus a nilpotent operator, and hence an unipotent operator for every $K$. This implies that $\sigma(T_v)=\{1\}$ on the whole of $\mathcal{P}_d$ and \eqref{sigmaSav} follows.\\
Writing
\begin{equation}
    S_{a,v} = (1-a)\left(I-\frac{a}{1-a}(T_v-I)\right),
\end{equation}
and noting that the operator $T_v-I$ is nilpotent of order $K+1$, i.e., $(T_v-I)^{K+1}=0$, implies that we can write a closed-form expression for the inverse of $S_{a,v}$:
\begin{equation}\label{inversexplicit}
    S_{a,v}^{-1} = \frac{1}{1-a} \sum_{k=0}^K \left(\frac{a}{1-a}\right)^k(T_v-I)^k.
\end{equation}
This immediately implies the bound 
\begin{equation}
    \|S_{a,v}^{-1}\|_{\rm op} \leq \frac{1}{|1-a|}\sum_{k=0}^K \left(\frac{|a|}{|1-a|}\right)^{k} \|T_v-I\|_{\rm op}^k,
\end{equation}
and, by summing the geometric sum, we obtain \eqref{boundSavinvfull}.\\ 
Assuming now \eqref{boundaevb}, we can invert $S_{a,v}$ by a Neumann series and the estimate \eqref{boundSavinv} follows right away. The other estimate \eqref{boundSavbinv2} follows from rewriting $S_{a,v}p=q$ as 
\begin{equation}
\frac{1}{a}p(x-v) -p(x) = \frac{1}{a}q(x-v). 
\end{equation}
To prove the surjectivity of $S_{1,v}:\mathcal{P}_{d,K+1}\to\mathcal{P}_{d,K}$, let us first note that we can factor
\begin{equation}
\begin{split}
       S_{1,v} &  = 1 - \exp(D_v)\\
       & =  1 - \sum_{n=0}^{K+1}\frac{D^n_v}{n!} \\
       & = -D_v\left(\sum_{n=0}^{K}\frac{D_v^n}{(n+1)!}\right)\\
      & =: D_v \Sigma_v,\qquad \Sigma_v:=-\sum_{n=0}^{K}\frac{D_v^n}{(n+1)!},
       \end{split}
\end{equation} 
since $D_v^{K+2}=0$ on $\mathcal{P}_{d,K+1}$. The operator $-\Sigma_v$ is the identity plus a nilpotent operator and is therefore invertible - hence $\Sigma_v$ is invertible as well. It follows that
\begin{equation}
    \text{range}(S_{1,v}) = \text{range}(D_v\Sigma_v) = \text{range}(D_v). 
\end{equation}
Surjectivity of $S_{1,v}:\mathcal{P}_{d,K+1}\to\mathcal{P}_{d,K}$ is thus equivalent to surjectivity of the streaming operator $D_v:\mathcal{P}_{d,K+1}\to\mathcal{P}_{d,K}$. The kernel of $D_v$ consists exactly of those polynomials which are constant along the $v$-direction, i.e., polynomials in the $d-1$ independent coordinates transverse to $v$. Hence
\begin{equation}
    \dim \ker D_v = \binom{K+d}{d-1}. 
\end{equation}
Therefore, the rank of $D_v$ is given by
\begin{equation}
\begin{split}
    \text{rank } D_v & = \dim \mathcal{P}_{d,K+1} - \dim \ker D_v\\
     & = \binom{K+1+d}{d} - \binom{ K+d}{ d-1}\\
     & = \binom{K+d}{d},
    \end{split}
\end{equation}
where in the last step, we have used Pascal's identity for binomial coefficients and for the dimension of the space of polynomials we refer to \cite{cox2015ideals}. Consequently,  $\text{rank } D_v = \dim \mathcal{P}_{d,K} $ and $D_v$ is surjective. 
\end{proof}

\subsection{Power Functions as Basis Functions}

%\todo[inline, color = green]{There are two classes of polynomials we have to consider: the coefficient polynomials of the logarithmic series $H_n$ which are defined by the dynamical system and the basis polynomials $p_j$ which we can choose. The form of the polynomials $H_n$ is of course relative to the basis polynomials $p_j$. We cannot choose the $p_j's$ completely freely of course, since they have to transform appropriately under the action of A in order to solve the cohomology equation. }

To resolve the invariance equation in the presence of resonances, we introduce series expansions of the form
\begin{equation}\label{expandheta}
    h(x) = \sum_{|n|=1}^\infty h_n(\eta) x^n,
\end{equation}
where each coefficient $h_n$ is a polynomial in the logarithmic variables $\eta(x)=(\eta_1(x),\ldots,\eta_M(x))$. Here $M$ is the number of logarithmic variables introduced by the resonant construction below, and
\begin{equation}\label{def_eta}
    \eta_l(x) = \log |\mathfrak{p}_l(x)|,\quad \mathfrak{p}_l(x) = \sum_{j=1}^d p_{l,j}|x_j|^{\nu_j},
    %,\quad 1 \leq j\leq M,
\end{equation}
for exponents $\nu_j>0$ and weights $p_{l,j}\geq0$ chosen below. For each $l$, at least one $p_{l,j}$ is positive; later the support of the vector $(p_{l,1},\ldots,p_{l,d})$ is chosen to match the support required by the corresponding resonant multi-index.
%We stress that the expressions of the form $|x_j|^{\nu_j}$ are interpreted as $(|x_j|^2)^{\nu_j/2}$, i.e., 
Expansion \eqref{expandheta} is a special instance of the logarithmic series \eqref{deflogpoly} for one particular choice of basis function. In order to resolve the cohomology equation in the presence of resonances, the function $x\mapsto \eta(x)$ has to satisfy the functional equation
\begin{equation}\label{func_eq_eta}
    \eta(Ax) = b + \eta(x),
\end{equation}
for some $b\neq 0$. The parameter $b$ will translate the vector quantity $v$ from the shift operator \eqref{def_shift_v}. Assuming that the coordinates $(x_1,...x_d)$ are chosen along eigendirections of $A$ without loss of generality, \eqref{func_eq_eta} implies that the exponents of the power functions necessarily have to be of the form
\begin{equation}\label{nu_form}
    \nu_j = \frac{\log(\gamma)}{\log|\lambda_j|},\quad 1\leq j\leq d,\quad 0<\gamma<1. 
\end{equation}
Indeed, we have that 
\begin{equation}
\begin{split}
    \eta(Ax) & = \log\left(\sum_{j=1}^d p_j|\lambda_j x_j|^{\nu_j}\right) =  \log\left(\sum_{j=1}^d p_j | x_j|^{\nu_j} \exp\left(\log|\lambda_j|\frac{\log(\gamma)}{\log|\lambda_j|}\right)\right)\\
    & = \log\left(\gamma\sum_{j=1}^d p_j| x_j|^{\nu_j}\right) = \log(\gamma) + \eta(x),
    \end{split}
\end{equation}
and hence, \eqref{def_eta} satisfies \eqref{func_eq_eta} for
\begin{equation}\label{defvlambda}
b = \log(\gamma),
%v_\lambda = (\log|\lambda_1|,...,\log|\lambda_d|). 
\end{equation}
as defined in \eqref{nu_form}. Subsequently, we will choose $\gamma$ adequately to guarantee certain regularity properties of $\eta$, see Lemma \ref{lemma_regularity} for details. The numerical values of the coefficients $p_j$ will be irrelevant for the further analysis, as long as $p_j>0$ for the right support index set, see Lemma \ref{lemma_regularity}.

%\begin{remark}A real polynomial $p$ with $Z(p)=0$ has to be sign definite and we may write\begin{equation}     p(x) = \pm\sum_{j=1}^m |p_j(x)|^{2},\end{equation}for some polynomials $p_j$, see \cite{}. \end{remark}\todo[inline, color = green]{Does the same hold true for power functions?}

\begin{remark}
To solve the cohomology equation with resonances, it is the logarithmic variables $\eta_j=\log|\phi_j|$, rather than the basis functions $\phi_j$ themselves, that must satisfy an additive shift relation
\begin{equation}\label{fcteqeta}
    \eta_j(Ax)=b_j+\eta_j(x),\quad 1\leq j\leq M,
\end{equation}
for some real numbers $b_j\neq0$. Such an $\eta_j$ cannot extend continuously with a finite value to the origin: formally evaluating the relation at $x=0$ would give $\eta_j(0)=b_j+\eta_j(0)$. This is precisely why we write $\eta_j=\log|\phi_j|$ with simpler basis functions $\phi_j$ that vanish at the relevant singular set. In the construction below we take $\phi_j=\mathfrak p_j$, where $\mathfrak p_j$ is a power function of the form \eqref{defpower}. Then $\eta_j=\log\mathfrak p_j$ satisfies \eqref{fcteqeta}. This convention keeps the continuous basis functions $\phi_j$ distinct from their singular logarithmic variables $\eta_j$.

\end{remark}

%We may generalize this type of polynomials \begin{equation}    q(x) = \pm\sum_{j=1}^m |q_j(x)|^{\nu_j}.\end{equation}

%A homogeneous polynomial is a polynomial where every single term (monomial) has the same total degree, meaning the sum of the exponents for the variables in each term is identical.\\

\begin{lemma}\label{lemma_regularity}
Let $\mathfrak p:\mathbb C^d\to\mathbb R$ be a power function of the form \eqref{defpower} and let
\begin{equation}
        S=\{j\in\{1,\ldots,d\}:p_j>0\},
\end{equation}
be the support of the weights. Let $n\in\mathbb N^d$ be a multi-index with
$|n|\geq 2$ and assume the support condition $S\subseteq \operatorname{supp}(n)$, i.e.,
\begin{equation}\label{asslemma}
        p_j>0\implies n_j>0.
\end{equation}
Set
\begin{equation}\label{regularity_index}
        r
        =
        \min\left\{
        \left(\sum_{j\in S} n_j\right)-1,\,
        \min_{j\in S} n_j
        \right\},\quad |n|\geq 2,
\end{equation}
and assume moreover that
\begin{equation}\label{ass_nu_large}
        \nu_j>r+1,
        \qquad j\in S,
\end{equation}
are positive real numbers that define the exponents of the basis functions \eqref{def_eta}.
Then, for every polynomial $Q$, the function
\begin{equation}
        x\mapsto x^n Q(\log\mathfrak p(x))
\end{equation}
extends to a $C^{r,1-\varepsilon}$-function in a neighborhood of the origin, for every
$\varepsilon\in(0,1)$.
%Moreover, the extension is $C^{r+1}$ at the origin in the sense that its Taylor polynomial of order $r+1$ at the origin exists and is identically zero.
\end{lemma}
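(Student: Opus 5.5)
The idea is to treat $F(x):=x^nQ(\log\mathfrak p(x))$ as a real function on $\mathbb R^{2d}$ and control its derivatives near the singular set $Z=\{x: x_j=0 \text{ for all } j\in S\}$ using the quasi-norm $\rho(x):=\mathfrak p(x)$.

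\textbf{Step 1 (smoothness off $Z$ and basic bounds).} Away from $Z$ we have $\mathfrak p>0$. Each $|x_j|^{\nu_j}=(|x_j|^2)^{\nu_j/2}$ is $C^{r+1}$ because $\nu_j>r+1$ by \eqref{ass_nu_large}. The variables $x_j$ with $j\notin S$ enter only through the monomial. Hence $F$ is $C^{r+1}$ on the complement of $Z$. Two elementary bounds will be used throughout:
\begin{equation*}
|x_j|\le C\rho^{1/\nu_j}\quad (j\in S),\qquad |\partial^\alpha_{x_j}\log\mathfrak p|\le C_\alpha\min\bigl\{|x_j|^{-|\alpha|},\,\rho^{-|\alpha|/\nu_j}\bigr\}\quad (|\alpha|\le r+1).
\end{equation*}
The second follows from $p_j|x_j|^{\nu_j}\le\rho$ and from the chain rule applied to $\log$. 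By Leibniz and Fa\`a di Bruno, $D^kF$ is a finite sum of terms
\begin{equation*}
\partial^{m}x^n\cdot(\log\rho)^{a}\cdot\prod_i \partial^{\alpha_i}\log\mathfrak p,\qquad a\le\deg Q,
\end{equation*}
where the total order of $m$ and of the $\alpha_i$ equals $k$, and derivatives in directions $j\notin S$ fall only on $x^n$. Each such term is then bounded by a product of powers of $|x_j|$, $j\in S$, and of $\rho^{-1/\nu_j}$, times $|\log\rho|^{\deg Q}$.

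\textbf{Step 2 (continuity of $D^kF$ for $k\le r$, with extension by zero on $Z$).} I will show that for $k\le r$ every term above is $O\bigl(\rho^{\delta}|\log\rho|^{\deg Q}\bigr)$ for some $\delta>0$, so it tends to $0$ at $Z$.

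The mechanism is a counting argument. Using $\min_{j\in S}n_j\ge r$, no single direction $j\in S$ is differentiated more times than it has powers available, except through $\log$-derivatives. The bound $\partial_{x_j}\log\mathfrak p\lesssim\rho^{-1/\nu_j}$ lets us pair each lost power $|x_j|$ against the gain $|x_j|\lesssim\rho^{1/\nu_j}$ from another factor. The condition $\sum_{j\in S}n_j-1\ge r$ guarantees that after $k\le r$ derivatives at least one full power of some $x_i$, $i\in S$, survives, and this supplies the factor $\rho^{\delta}$ that kills the logarithms.

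A standard lemma then completes this step. Since $Z$ has real codimension at least $2$, a function that is $C^k$ off $Z$, whose derivatives up to order $k$ extend continuously by zero to $Z$, is $C^k$ across $Z$. This follows by integrating along segments, which generically miss $Z$.

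\textbf{Step 3 (the H\"older step).} I will aim for the bound
\begin{equation*}
|D^{r+1}F(x)|\le C\bigl(1+|\log\rho(x)|\bigr)^{m}\quad\text{off }Z,
\end{equation*}
for some integer $m$. Granting this, integrate $D^{r+1}F$ along segments avoiding $Z$ and use continuity of $D^rF$. The logarithmic singularity is integrable along lines, and one gets the log-Lipschitz estimate $|D^rF(x)-D^rF(y)|\le C|x-y|\bigl(1+|\log|x-y||\bigr)^{m}$. This implies $C^{r,1-\varepsilon}$ for every $\varepsilon\in(0,1)$.

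\textbf{Main obstacle.} The critical case is the bound of Step 3 when $r=\min_{j\in S}n_j$ and all $r+1$ derivatives fall in one direction $j\in S$. The naive estimate then produces a factor $|x_j|^{-1}$, as in $\partial_1^2\bigl(x_1x_2\log\mathfrak p\bigr)\sim x_2\,\partial_1\log\mathfrak p$. To handle it I would first try to use the surviving factor $x_i^{n_i}$, $i\in S\setminus\{j\}$, together with the refined bound $\partial^{\alpha}_{x_j}\log\mathfrak p\lesssim |x_j|^{\nu_j-|\alpha|}/\rho$, which should give a term comparable to
\begin{equation*}
|x_i|^{n_i}\,|x_j|^{\nu_j-1}/\rho .
\end{equation*}
I would then balance this quantity by splitting into the regions $|x_i|^{\nu_i}\gtrless|x_j|^{\nu_j}$. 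If the exponents $\nu_i\ne\nu_j$ make this balance fail, the fallback is a direct two-variable H\"older estimate of $D^rF$ that bypasses $D^{r+1}F$ in this term. In that fallback I would exploit that $D^rF$ is of the form (bounded homogeneous-type factor)$\times|\log\rho|^m\times$(vanishing power), and estimate differences separately in the regions $|x-y|\ll\rho(x)^{1/\nu}$ and $|x-y|\gtrsim\rho(x)^{1/\nu}$.
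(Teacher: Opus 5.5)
Your Steps 1 and 2 are sound, and they essentially reproduce the paper's continuity argument: the crude bound $|\partial^m\log\mathfrak p|\lesssim\prod_{j\in S}|x_j|^{-m_j}$ together with $r\le\min_{j\in S}n_j$ and $r\le\sum_{j\in S}n_j-1$. The genuine gap is in Step 3, exactly where you flagged the main obstacle, and it cannot be closed. The statement is false as written once the exponents $\nu_j$, $j\in S$, differ enough. Your refined quantity $|x_i|^{n_i}|x_j|^{\nu_j-1}/\mathfrak p$ has, on the curve $|x_j|^{\nu_j}\asymp|x_i|^{\nu_i}$, size $|x_i|^{n_i}/|x_j|\asymp|x_j|^{n_i\nu_j/\nu_i-1}$. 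This blows up polynomially when $\nu_i>n_i\nu_j$, so the bound $|D^{r+1}F|\lesssim(1+|\log\rho|)^m$ fails. The fallback direct H\"older estimate fails for the same reason.

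Concretely, take $d=2$, $n=(1,1)$, $Q(t)=t$ and $\mathfrak p(x)=|x_1|^3+|x_2|^6$. Then $S=\{1,2\}$, $r=1$ and $\nu_j>2$, so every hypothesis holds. Let $u=x_1x_2\log\mathfrak p$ and restrict to the real slice $x_1=s\geq 0$, $x_2=t>0$:
\[
\partial_s u(s,t)=t\log(s^3+t^6)+\frac{3s^3t}{s^3+t^6},\qquad
\partial_s u(t^2,t)-\partial_s u(0,t)=\Bigl(\log 2+\tfrac32\Bigr)t .
\]
The two points are at distance $t^2$. Hence $\partial_{\Re x_1}u$ is not $\alpha$-H\"older for any $\alpha>\tfrac12$, i.e.\ $u\notin C^{1,1-\varepsilon}$ for $\varepsilon<\tfrac12$.

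The paper's proof has the same defect. After the continuity step it asserts $|\nabla\partial^m u|\le C_\alpha\rho_S^{\alpha-1}$, with $\rho_S=\sum_{j\in S}|x_j|$, ``by the same estimates''. Those estimates only bound $\partial_{x_j}\log\mathfrak p$ by $|x_j|^{-1}$, not by $\rho_S^{-1}$. In the example, $\partial_s^2u(t^2,t)=\tfrac{15}{4}t^{-1}$, which is not $O(\rho_S^{\alpha-1})$ for any $\alpha>0$.

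An extra hypothesis is therefore needed, for instance $\nu_j=\nu$ for all $j\in S$. Then $\mathfrak p\asymp\rho_S^{\nu}$ and $|\partial^\beta\log\mathfrak p|\lesssim\rho_S^{-|\beta|}$ for $|\beta|\le r+1$. Since $r+1\le\sum_{j\in S}n_j$, your Step 3 bound holds. Your argument then goes through, provided you choose integration paths that do not run close to and parallel to $Z$ when integrating $D^{r+1}F$.

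Shrinking $\gamma$ does not rescue the paper's setting, because the ratios $\nu_i/\nu_j=\log|\lambda_j|/\log|\lambda_i|$ are fixed by the spectrum. In the paper's $1\!:\!2\!:\!3$ example one has $\nu_1=2\nu_2$. There $\partial_y\bigl(xy\log(x^2+|y|)\bigr)$ differs by $(\log 2+\tfrac12)t$ between $(t,0)$ and $(t,t^2)$, so it is not $\alpha$-H\"older for $\alpha>\tfrac12$. The same happens with $\nu=(6,3)$, which does satisfy the lemma's hypothesis.
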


\begin{proof}
We identify $\mathbb C^d$ with $\mathbb R^{2d}$ by writing
\begin{equation}
        x_j=a_j+\ri b_j,
        \qquad
        |x_j|=(a_j^2+b_j^2)^{1/2}.
\end{equation}
Set
\begin{equation}
        \eta(x)=\log\mathfrak p(x)
\end{equation}
and $\Sigma=\{x_j=0:\ \text{ for every } j\in S\}$, the set of possible singularities of $\eta$. By \eqref{asslemma}, all
variables causing these singularities occur with positive power in $x^n$.\\
Let $m$ be a real multi-index in the variables
$(a_1,b_1,\ldots,a_d,b_d)$, and let $m_j$ denote the number of derivatives falling on
the pair $(a_j,b_j)$. Since $\nu_j>r+1$, the functions $|x_j|^{\nu_j}$ are
$C^{r+1}$ across $x_j=0$. Moreover, for $j\in S$ and $1\leq q\leq r+1$,
\begin{equation}\label{der_power_modified}
        |\partial^q |x_j|^{\nu_j}|
        \le
        C_q |x_j|^{\nu_j-q}.
\end{equation}
Since $p_j |x_j|^{\nu_j}\leq \mathfrak p(x)$, this gives, whenever $x_j\neq0$,
\begin{equation}\label{der_power_rho_modified}
        |\partial^q |x_j|^{\nu_j}|
        \le
        C_q \mathfrak p(x)|x_j|^{-q},
        \qquad j\in S.
\end{equation}
At points with $x_j=0$ the derivatives up to order $r+1$ extend continuously because $\nu_j>r+1$; the inverse-power estimate is used only away from the corresponding coordinate hyperplane.
Faà di Bruno's formula applied to $\eta=\log\mathfrak p$ yields, away from $\Sigma$,
\begin{equation}\label{eta_derivative_bound_modified}
        |\partial^m\eta(x)|
        \le
        C_m\prod_{j\in S}|x_j|^{-m_j},
        \qquad
        1\leq |m|\leq r+1.
\end{equation}
For $m=0$, we use the elementary logarithmic bound
\begin{equation}\label{eta_log_bound_modified}
        |\eta(x)|
        \le
        C_0\left(
        1+\sum_{j\in S}|\log |x_j||
        \right).
\end{equation}
Let
\begin{equation}
        u(x)=x^n Q(\eta(x)),
        \qquad
        M=\deg Q.
\end{equation}
By \eqref{eta_log_bound_modified},
\begin{equation}\label{Q_log_bound_modified}
        |Q(\eta(x))|
        \le
        C_Q
        \left(
        1+\sum_{j\in S}|\log |x_j||
        \right)^M.
\end{equation}
Since $n_j>0$ for every $j\in S$, powers dominate logarithms, and therefore
$u(x)\to0$ as $x\to\Sigma$. Thus $u$ extends continuously to $\Sigma$ by setting
$u=0$.\\
We now prove $C^{r,1-\varepsilon}$-regularity in a neighborhood of the origin. Let
$|m|\leq r$. By Leibniz' rule, the chain rule, and
\eqref{eta_derivative_bound_modified}, every term in $\partial^m u$ is bounded by a
finite sum of expressions of the form
\begin{equation}\label{term_bound_modified}
        C_m
        \left(
        \prod_{j\in S}|x_j|^{n_j-m_j}
        \right)
        \left(
        1+\sum_{j\in S}|\log |x_j||
        \right)^M
        R_m(x),
\end{equation}
where $R_m$ is bounded near the origin. Since
\begin{equation}
        |m|\leq r\leq \min_{j\in S}n_j,
\end{equation}
all exponents $n_j-m_j$ are nonnegative. Moreover, by \eqref{regularity_index},
\begin{equation}
        |m|\leq r\leq \sum_{j\in S}n_j-1,
\end{equation}
and hence
\begin{equation}\label{positive_remaining_power_modified}
\sum_{j\in S}(n_j-m_j)\geq 1.
\end{equation}
In particular, at least one positive power of a variable from $S$ remains in every
derivative of order at most $r$. Therefore \eqref{term_bound_modified} implies that each
$\partial^m u$, $|m|\leq r$, extends continuously to $\Sigma$.\\
Let $|m|=r$ and set
\begin{equation}
        \rho(x)=\sum_{j\in S}|x_j|.
\end{equation}
Since \eqref{positive_remaining_power_modified} gives one remaining power, the dominance of powers over logarithms at the origin implies that, for every $\alpha\in(0,1)$,
\begin{equation}\label{pointwise_holder_decay_modified}
        |\partial^m u(x)|
        \le
        C_\alpha \rho(x)^\alpha.
\end{equation}
Differentiating once more away from $\Sigma$, the same estimates give
\begin{equation}\label{gradient_holder_bound_modified}
        |\nabla\partial^m u(x)|
        \le
        C_\alpha \rho(x)^{\alpha-1}.
\end{equation}
If $|x-y|\geq \rho(x)/2$ or $|x-y|\geq \rho(y)/2$, then
\eqref{pointwise_holder_decay_modified} gives
\begin{equation}
        |\partial^m u(x)-\partial^m u(y)|
        \le
        C_\alpha |x-y|^\alpha.
\end{equation}
Otherwise, $|x-y|<\rho(x)/2$ and $|x-y|<\rho(y)/2$. Since $\rho$ is $\sqrt{|S|}$-Lipschitz, after subdividing the segment into a fixed number (depending only on $|S|$) of pieces if necessary, $\rho$ stays comparable to $\rho(x)$ on each piece
and \eqref{gradient_holder_bound_modified} gives
\begin{equation}
        |\partial^m u(x)-\partial^m u(y)|
        \le
        C_\alpha |x-y|\rho(x)^{\alpha-1}
        \le
        C_\alpha |x-y|^\alpha.
\end{equation}
Thus $u\in C^{r,\alpha}$ near the origin for every $\alpha\in(0,1)$. Taking
$\alpha=1-\varepsilon$ gives $u\in C^{r,1-\varepsilon}$.\\
\end{proof}

In the following, we collect some counterexamples as remarks that show that the assumptions of Lemma \ref{lemma_regularity} cannot be relaxed in general. 

\begin{remark}
Functions of the form $x\mapsto x^n Q(\log\mathfrak{p}(x))$ are generally not $C^{N,\alpha}$ for $N\geq |n|$. Indeed, along the real line $x(t)=(t,\dots,t)$, $t>0$, we have that 
\begin{equation}
    \mathfrak{p}(x(t))=t^{\nu_{\min}}(c+o(1)),\qquad
x(t)^n=t^{|n|}, 
\end{equation}
for some constant $c>0$ and thus
\begin{equation}
x^n\log\mathfrak{p}(x(t))=\nu_{\min} t^{|n|}\log t+O(t^{|n|}).
\end{equation}
The function $t^{|n|}\log t$ is $C^{|n|-1}$ but not $C^{|n|}$ at $0$.  The regularity guaranteed by Lemma \ref{lemma_regularity} is thus sharp for non-constant Q, or, more precisely, whenever the logarithmic polynomial has a nonzero logarithmic part.
%We conclude that if $N\ge1$ and $f(0)=0$, then $x^n\log p(x)\in C^{N-1}$but generally not in $C^{N}$ at the origin.
\end{remark}

\begin{remark}
The lower bound on the exponents $\nu_j$ in \eqref{ass_nu_large} cannot be removed in general if
one wants H\"older exponents arbitrarily close to one. Consider the real two-dimensional example
\begin{equation}
        n=(1,1),
        \qquad
        \mathfrak p(x,y)=|x|^\nu+|y|,
        \qquad
        u(x,y)=xy\log\mathfrak p(x,y),
\end{equation}
for $0<\nu<1$. The support condition \eqref{asslemma} is satisfied and the
regularity index predicted by the preceding lemma is $r=1$. For $x>0$, we have
\begin{equation}
        \partial_x u(x,y)
        =
        y\log(x^\nu+|y|)
        +
        \nu y\frac{x^\nu}{x^\nu+|y|}.
\end{equation}
At $x=0$, the continuous extension of this derivative, if it exists, must be
\begin{equation}
        \partial_x u(0,y)
        =
        y\log |y|.
\end{equation}
Fixing $y=y_0>0$, we obtain as $x\to0^+$
\begin{equation}
\begin{split}
        \partial_x u(x,y_0)-\partial_x u(0,y_0)
        &=
        y_0\log\left(1+\frac{x^\nu}{y_0}\right)
        +
        \nu y_0\frac{x^\nu}{x^\nu+y_0}  \\
        &=
        (1+\nu)x^\nu+o(x^\nu).
\end{split}
\end{equation}
Consequently, $\partial_xu$ is not $\alpha$-H\"older continuous in the $x$-direction
for any $\alpha>\nu$. In particular, choosing $\varepsilon>0$ so small that $1-\varepsilon>\nu$, we see that $u\notin C^{1,1-\varepsilon}$.  Thus some lower bound assumption such as \eqref{ass_nu_large} on the exponent $\nu_j$ is necessary for the conclusion $C^{r,1-}$, see also Figure \ref{small_nu_plot}. \\
\begin{figure}
    \centering
\includegraphics[width=0.45\linewidth]{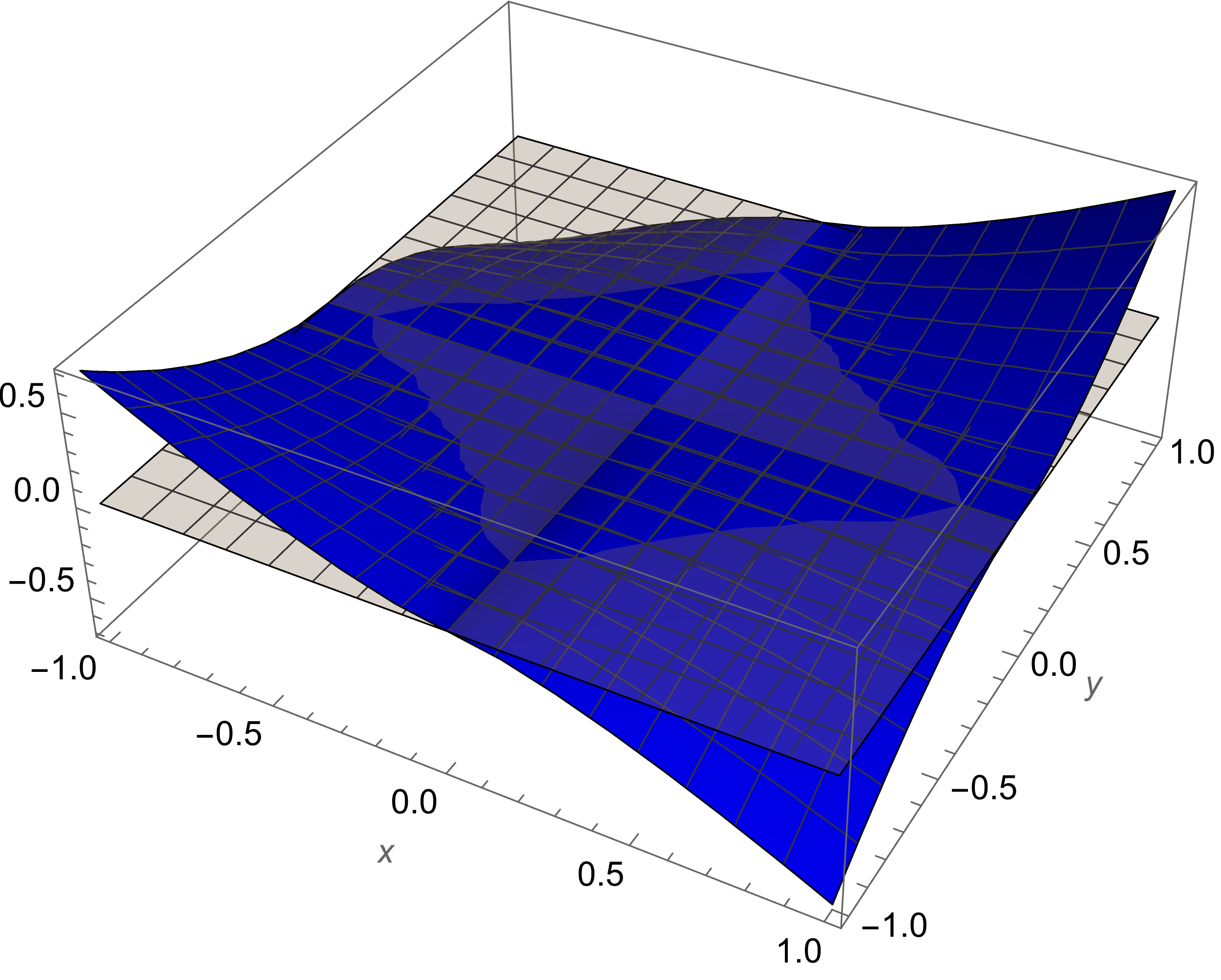}
\includegraphics[width=0.45\linewidth]{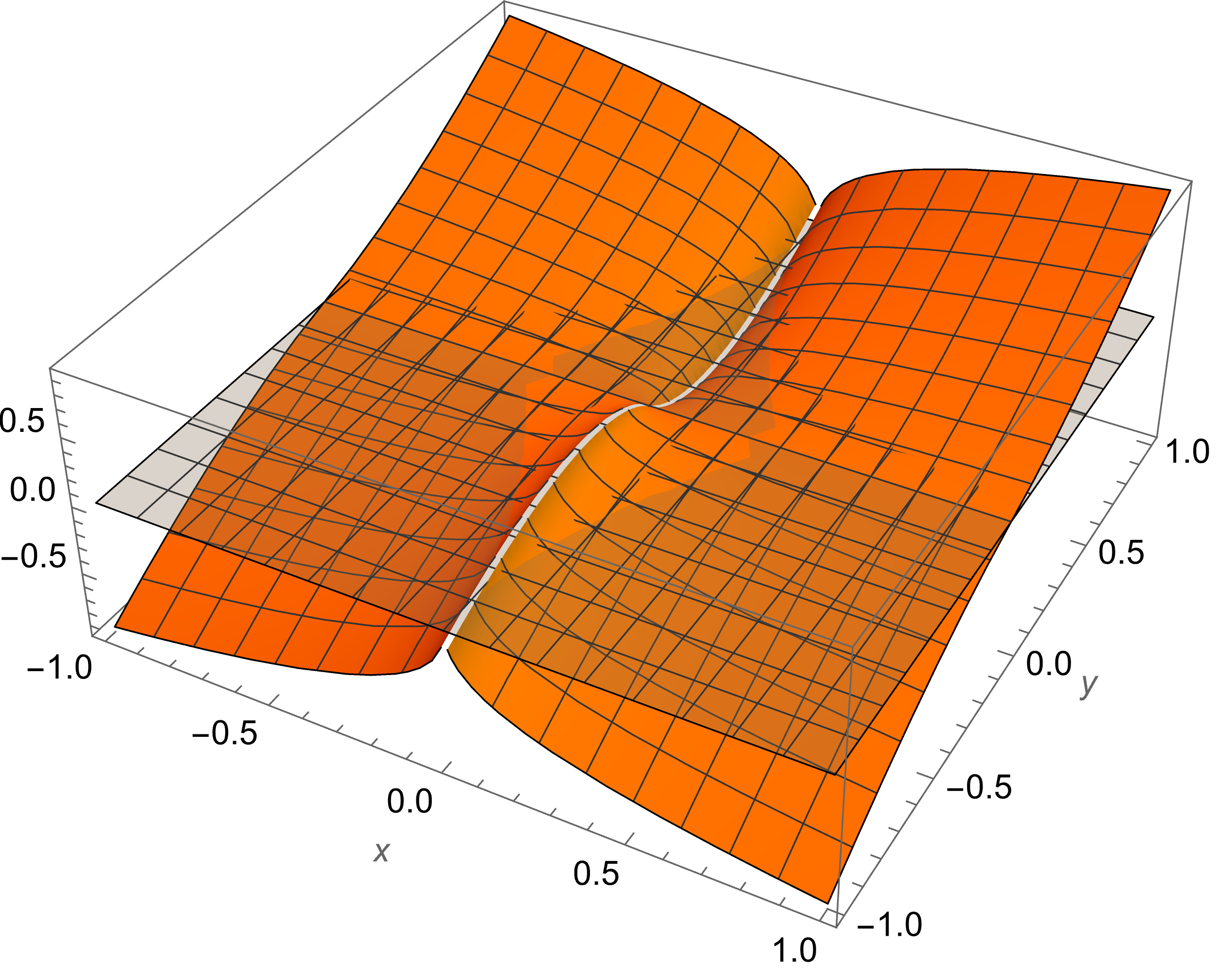}
    \caption{The function $u(x,y) = xy \log(|x|^{\frac{1}{2}}+|y|)$ (blue) compared to its $x$-derivative (orange). The $x$-derivative forms a cusp at $x=0$ which prohibits H\"{o}lder continuity beyond $\alpha = 1/2$.}
    \label{small_nu_plot}
\end{figure}
\end{remark}

\begin{remark}
The support condition in Lemma~\ref{lemma_regularity} is also necessary. Indeed, take
\begin{equation}
        n=(1,0),
        \qquad
        \mathfrak p(x,y)=|y|,
        \qquad
     u(x,y)=x\log|y|.
\end{equation}
Here, the logarithmic power function contains the variable $y$ but the monomial multiplying the logarithmic expression
contains no compensating power of $y$. Along the curve
\begin{equation}
        x=t,
        \qquad
        y=e^{-1/t},
        \qquad
        t>0,
\end{equation}
we have
\begin{equation}
        (x,y)\to(0,0),
        \qquad
        u(t,e^{-1/t})
        =
        t\log(e^{-1/t})
        =
        -1.
\end{equation}
Hence $u$ does not even extend continuously to the origin. This shows that variables
appearing in the logarithmic basis must also appear with positive exponent in the monomial
prefactor.
\end{remark}

\begin{remark}
The regularity in Lemma~\ref{lemma_regularity} cannot, in general, be improved from the
minimal coordinate regularity to the total-degree regularity $|n|-1$. Consider
\begin{equation}
        n=(1,1,1),\quad \mathfrak p(x,y,z)
        =
        x^{100}+y^{10000}+z^{10000},
        \qquad
        u(x,y,z)
        =
        xyz\log\mathfrak p(x,y,z).
\end{equation}
for $x,y,z>0$. The support condition \eqref{asslemma} is satisfied, since every variable appearing in
$\mathfrak p$ also appears in the monomial $xyz$. Moreover, the exponents are
large enough. However,
\begin{equation}
        \min_{j:p_j>0} n_j=1, \text{ while }
        \qquad
        |n|-1=2.
\end{equation}
Thus Lemma~\ref{lemma_regularity} guarantees at most
\begin{equation}
        u\in C^{1,1-\varepsilon},
        \qquad
        \varepsilon\in(0,1),
\end{equation}
whereas a statement with regularity $|n|-1$ would incorrectly predict $C^2$-regularity.\\
Indeed,
\begin{equation}
        \partial_x^2 u(x,y,z)
        =
        yz\left(
        \frac{10100x^{99}}{\mathfrak p(x,y,z)}
        -
        \frac{10000x^{199}}{\mathfrak p(x,y,z)^2}
        \right).
\end{equation}
Along the curve
\begin{equation}
        x=t,
        \qquad
        y=z=t^{1/10},
        \qquad
        t>0,
\end{equation}
we have
\begin{equation}
        \mathfrak p(t,t^{1/10},t^{1/10})
        =
        t^{100}+2t^{1000}
        \sim
        t^{100}.
\end{equation}
Consequently,
\begin{equation}
        \partial_x^2 u(t,t^{1/10},t^{1/10})
        \sim
        C t^{1/5}t^{-2}
        =
        C t^{-9/5},
\end{equation}
which blows up as $t\to0$. Hence $u\notin C^2$. 
This shows that, for mixed monomials, the total degree $|n|$ does not determine the
overall regularity. The limiting quantity is indeed the smallest power with which the
logarithmically active variables occur in the monomial.
\end{remark}

\begin{remark}
    %If we drop assumption \eqref{asslemma}, the Lemma obviously fails, as illustrated by the function \begin{equation}       x \log (x^2+y^2),   \end{equation}which is not continuously differentiable at the origin. Let us stress that the above Lemma also applies for $|n|=1$, since, the function $x\mapsto  x\log|x|$ is $C^{0,\alpha}$ for every $0<\alpha<1$.\\
    From a purely algebraic point of view, the subsequent analysis can be carried out equally with the basis functions
\begin{equation}\label{eta_mod_xj}
    \eta_j = \log|x|_j,\quad 1\leq j\leq d,
\end{equation}
which, in contrast to the logarithms of power functions discussed in Lemma \ref{lemma_regularity}, only guarantee $C^{r-1,1-}$-regularity of functions of the form $x^r_j\log|x_j|$, see Figure \ref{complog}.  Indeed, the basis functions $\log|x_j|$ are unbounded on a whole hyperplane of co-dimension one, where $|x_j|$ is zero. Basis functions given by power functions of the form \eqref{defpower} thus guarantee improved regularity as compared to basis functions of the form \eqref{eta_mod_xj}.

\begin{figure}[ht]
    \centering
\includegraphics[width=0.45\linewidth]{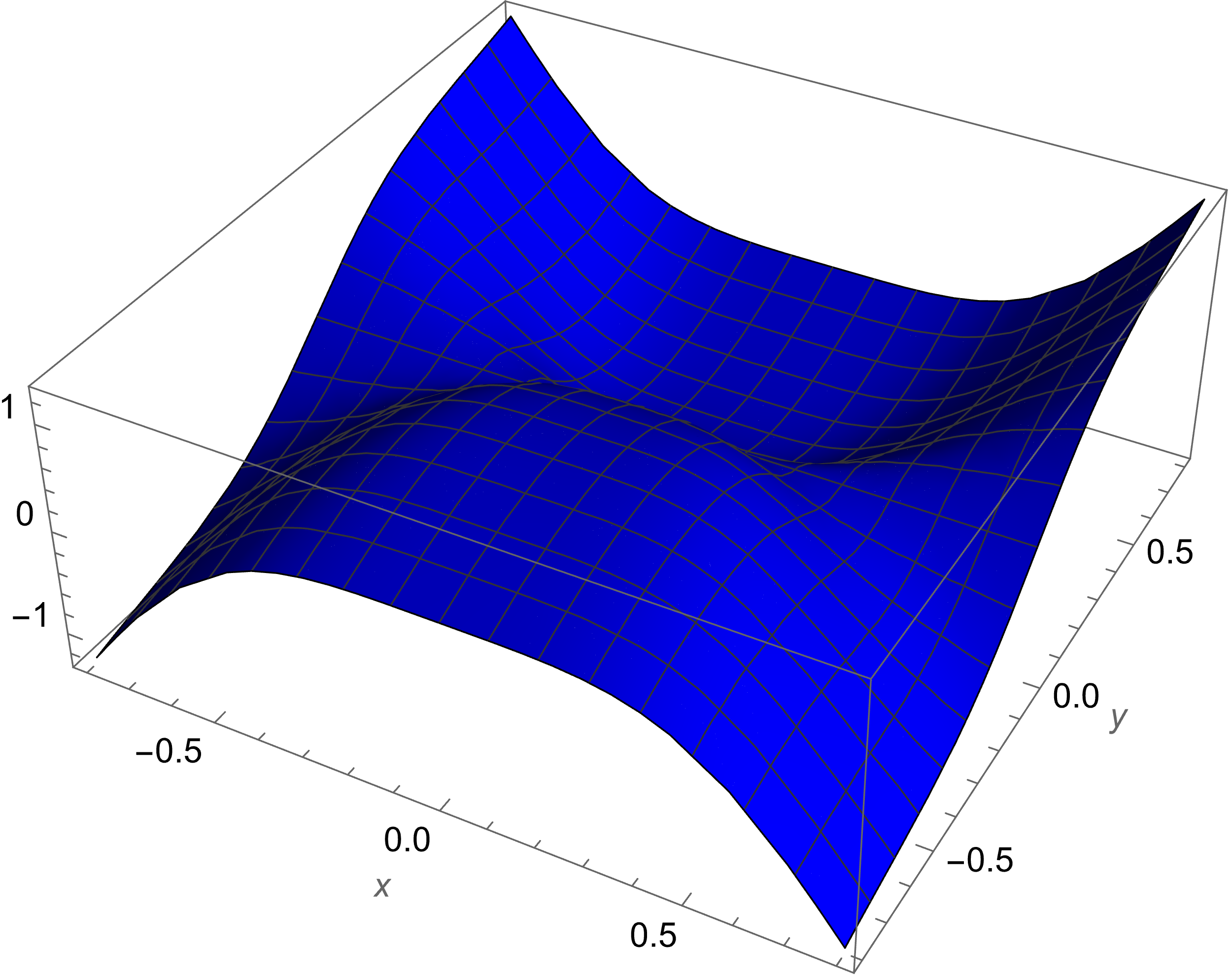}
\includegraphics[width=0.45\linewidth]{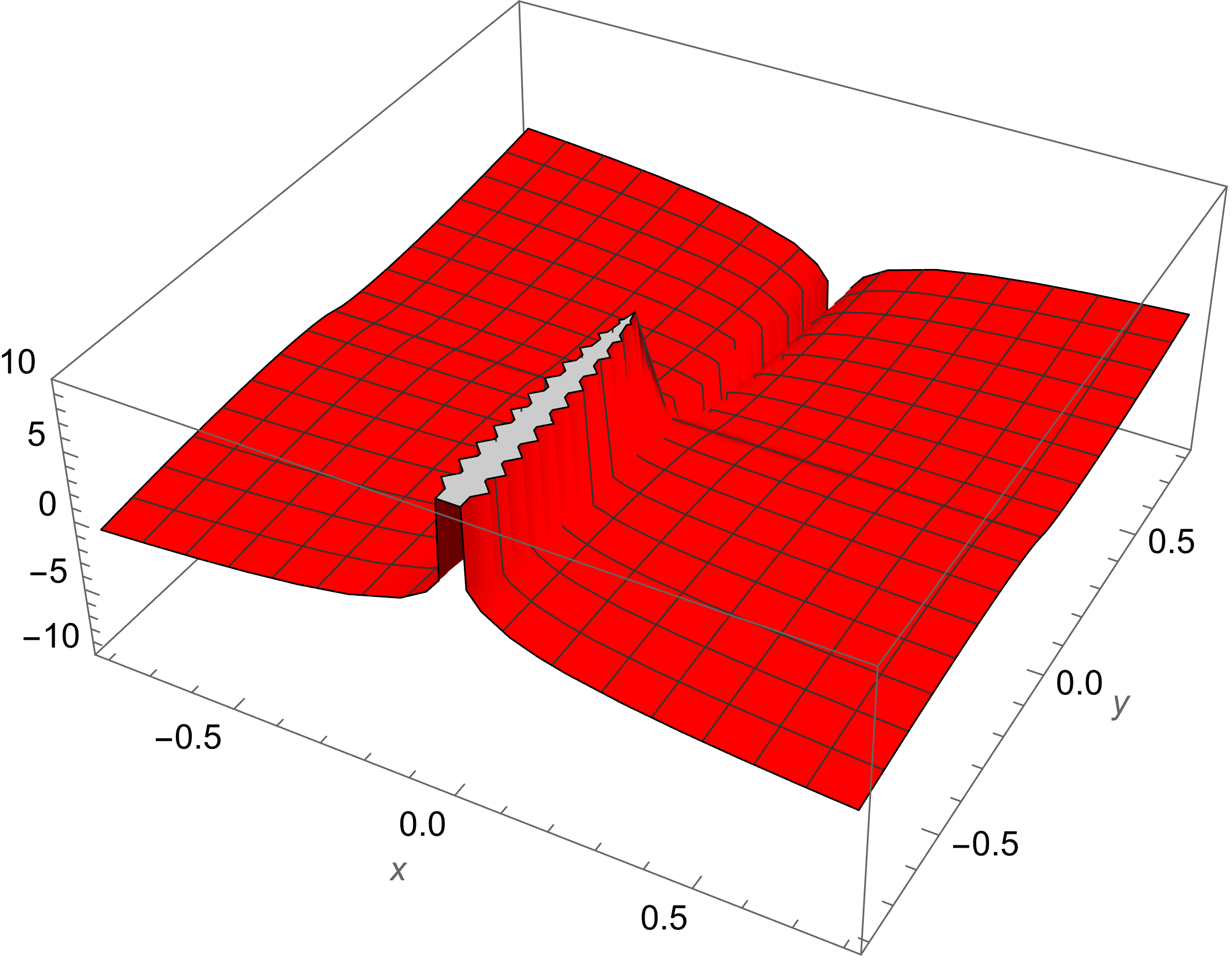}
    \caption{The $x$-derivative of the function $f(x,y) = \log(x^4+y^2)xy$ (blue) compared to the $x$-derivative of the function $g(x,y) = \log(|xy|)xy$ (red). }
    \label{complog}
\end{figure}

\end{remark}

%\todo[inline, color = green]{The above considerations may be extended from power functions to generalized polynomial expression of the form\begin{equation}    \sum_{|m|=1} ^ M p_m(x^\nu_{m})\end{equation}\begin{equation}    \sum_{|m|=1} ^ M p_m x^\nu_{m}\end{equation}}

%\begin{remark}    One might consider adapted norms , then the operator norm estimate \eqref{boundSavinvfull} might imply stronger bounds on as compared to , one might want to consider different topologies, this will be addressed in future works. \end{remark}

%\begin{lemma}Let $x=(x_{i,j})_{1\leq i\leq s, 1\leq j\leq l_i}$ and assume that the matrix $A$ acts on $x$ as $Ax=(\lambda_ix_{i,j}+)$\end{lemma}

\subsection{Approximate Solutions to the Invariance Equation}\label{approx_solutions}

%In the expansion, we either have to use eigen-directions or all directions associated to an eigenvalue, i.e., the whole Jordan block as coordinates to guarantee the invariant action of the linearization. 

We will now construct approximate solutions to the invariance equation \eqref{invariance} by means of logarithmic polynomials. Lemma \ref{shifted} will be used to guarantee that we can solve the cohomology equation order-by-order for specific logarithmic polynomials.
%In order to guarantee regularity of the approximate solution to the invariance equation as a logarithmic polynomial, however, we have to make sure that no mixing of logarithmic terms with monomials of different variables occurs. 

\begin{proposition}\label{approx}
\quad 
Consider an analytic map $f:\mathbb{C}^s\to\mathbb{C}^s$ with $f(0)=0$ and assume that
 \begin{equation}
A=Df(0),
\end{equation}
is semisimple and invertible, i.e.,
\begin{equation}\label{zeronotinspectrum}
0\notin \sigma(A).
\end{equation}
Let $\{\lambda_1,...,\lambda_s\}$ be the set of eigenvalues of $A$ and let $\{e_1,..,e_s\}$ be a basis of eigenvectors of $A$. We assume that the coordinates in $\mathbb{C}^s$ are with respect to the chosen eigenbasis and no generalized eigenvectors are
invoked under the semisimplicity hypothesis.\\
Let $X_1$ be an invariant linear subspace of $A$, i.e., $AX_1\subseteq X_1$, and let $\{e_1,..,e_d\}$ be a basis of $X_1$ consisting of eigenvectors associated to the eigenvalues $\{\lambda_1,...,\lambda_d\}$, counting the eigenvalues up to renumbering, such that
\begin{equation}\label{lambdanotone}
|\lambda_j|< 1,\quad 1\leq j\leq d.
\end{equation} 
For any $N\in\mathbb{N}$, we can find a logarithmic polynomial of degree not larger than $N$,
\begin{equation}\label{hN}
h^{< N}(x)=\sum_{1\leq |n|< N} h_n(\eta)x^n,
%h^{< N}=\sum_{1\leq |n|< N} h_n(\eta_1,...,\eta_M)x^n,
\end{equation}
where $\eta = (\eta_1,...,\eta_M)$ is a vector of specific logarithm power functions of the form \eqref{def_eta} such that the support restrictions of Lemma \ref{lemma_regularity} are satisfied. The polynomials $h_n:\mathbb{C}^M\to\mathbb{C}^s$ are
normalized to
\begin{equation}\label{pO1}
h_{1@l}=v_l, \quad 1\leq l\leq d,
\end{equation}
and satisfy 
\begin{equation}\label{bounddegree}
    \deg h_n \leq |n|-1,
\end{equation}
such that the invariance equation \eqref{invariance} can be solved up to order $N$:
\begin{equation}
f(h^{< N}(x))=h^{< N}(Ax)+R_N(x),
\end{equation}
where the remainder is given as a logarithmic polynomial of the form
\begin{equation}
R_N(x)=\sum_{|n|\geq N}r_n(\eta)x^n, %_N(x)=\sum_{|n|\geq N}r_n(\eta_1,...,\eta_M)x^n, 
\end{equation}
for polynomials $r_n:\mathbb{C}^M\to\mathbb{C}^s$, satisfying the support conditions of Lemma \ref{lemma_regularity} as well. 
\end{proposition}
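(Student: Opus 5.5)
The plan is to construct $h^{<N}$ order by order in $|n|$, inserting the ansatz \eqref{hN} into \eqref{invariance} and reducing each order to a shifted-polynomial equation handled by Lemma \ref{shifted}. Fix $\gamma\in(0,1)$ small enough that the exponents $\nu_j=\log\gamma/\log|\lambda_j|$ from \eqref{nu_form} satisfy $\nu_j>r+1$ for every regularity index $r$ that can arise. By Lemma \ref{lemma_finite_resonances} there are only finitely many resonant $n$, and each has $r(n)\le |n|$ bounded, so one choice of $\gamma$ works for all of them. For every resonant multi-index $n\in I_{\rm res}(\Lambda)$, introduce one logarithmic variable
\begin{equation*}
\eta_n(x)=\log\mathfrak p_n(x),\qquad \mathfrak p_n(x)=\sum_{j\in\operatorname{supp}(n)}|x_j|^{\nu_j}.
\end{equation*}
The computation following \eqref{nu_form} gives $\eta_n(Ax)=\eta_n(x)+\log\gamma$. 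Hence every $\eta_l$ shifts by the same $b=\log\gamma\neq0$, and the shift vector is $v=(b,\dots,b)\neq0$.

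At order one, set $h_{1@l}=v_l=e_l$, which gives \eqref{pO1}. Now suppose all $h_m$ with $|m|<|n|$ are known. Expand $f(h)=Ah+N_f(h)$ and collect the coefficient of $x^n$. Since $N_f$ is analytic and quadratic, $N_f(h^{<|n|})$ contributes a polynomial $F_n(\eta)$ built from products of the lower coefficients. Using the product bound $\deg(h_{m_1}\cdots h_{m_k})\le\sum(|m_i|-1)\le|n|-2$ whenever $k\ge2$ and $\sum m_i=n$, we get $\deg F_n\le|n|-2$. Because $(Ax)^n=\lambda_\Lambda^n x^n$ and $\eta(Ax)=\eta+v$, the $k$-th component of the order-$n$ equation reads
\begin{equation*}
\lambda_k h_{n,k}(\eta)-\lambda_\Lambda^n h_{n,k}(\eta+v)=-F_{n,k}(\eta),
\end{equation*}
that is, $\lambda_k S_{a,v}[h_{n,k}]=-F_{n,k}$ with $a=\lambda_\Lambda^n/\lambda_k$. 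Here $\lambda_k\neq0$ by invertibility.

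If $a\neq1$, Lemma \ref{shifted} inverts $S_{a,v}$ on $\mathcal P_{M,|n|-2}$, so $\deg h_{n,k}\le|n|-2$. If $a=1$ (a resonance, necessarily with $|n|\ge2$), the surjectivity statement of Lemma \ref{shifted} gives a preimage in $\mathcal P_{M,|n|-1}$, so \eqref{bounddegree} still holds. Iterating up to $|n|=N-1$ removes all terms of order $<N$. The remainder $R_N$ then collects the terms $x^n$ with $|n|\ge N$ produced by the composition; it is a convergent logarithmic series with polynomial coefficients, because $f$ is analytic and $h^{<N}$ is a finite sum.

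The main obstacle is the support condition of Lemma \ref{lemma_regularity}: each coefficient $h_n$ may only depend on those $\eta_l$ whose support lies in $\operatorname{supp}(n)$. I would prove this by induction together with the construction. First restrict the resonant solve so that $h_{n,k}$ uses only $\eta_n$ and variables already present in $F_n$. Solving $S_{1,v}$ along the single direction $\eta_n$ suffices, because $D_v$ acts as $\partial_{\eta_n}$ on polynomials in $\eta_n$ alone; more generally, the explicit inverses in the proof of Lemma \ref{shifted} preserve the subalgebra of polynomials in a chosen subset of variables. Non-resonant inversions likewise do not introduce new variables. For a product term $h_{m_1}\cdots h_{m_k}$ with $\sum m_i=n$, each factor depends only on $\eta_l$ with support in $\operatorname{supp}(m_i)\subseteq\operatorname{supp}(n)$, so the support condition propagates to $F_n$ and to $R_N$.

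A subtle point to check is that the polynomial identity in $\eta$ is legitimate, since the $\eta_l$ are functionally related. I would sidestep this by treating the $\eta_l$ formally: equality of formal coefficients implies equality of the functions.
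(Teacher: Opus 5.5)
Your proposal is correct and takes essentially the same route as the paper. You reduce the invariance equation order by order to the shifted-polynomial equations $\lambda_k h_{n,k}(\eta)-\lambda_\Lambda^n h_{n,k}(\eta+v)=-F_{n,k}(\eta)$ and solve them with Lemma~\ref{shifted}, using invertibility for $a\neq1$ and surjectivity with the degree raised by one for $a=1$. You get \eqref{bounddegree} by induction, since every nonlinear contribution is a product of at least two lower-order coefficients.

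You go beyond the paper's sketch in three places. You fix one logarithmic variable per resonant index. You show that solving the shifted equations stays within the polynomials in any chosen subset of the $\eta_l$, which makes the support propagation precise. And you treat the $\eta_l$ formally, which justifies reading the identities coefficient by coefficient.
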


\begin{proof}
%To ease the traceability of the proof, we treat the \rev{semisimple} and the non-\rev{semisimple} case separately. This also provides a more explicit description of the \rev{semisimple} case, which often appears in applications. In this case, we have that $s=d$ and that $\eta_j=x_j$ for $1\leq j\leq s$. 

We write $\lambda=(\lambda_1,...,\lambda_s)$ for the vector of eigenvalues. Since the coordinates are with respect to eigenvectors, we have that
\begin{equation}\label{Ax}
A|_{X_1}x=(\lambda_1x_1,..,\lambda_dx_d).
\end{equation}
The map $f$ being analytic, it admits a Taylor series expansion up to any order and we can write 
\begin{equation}
f(x)=Ax+\sum_{|n|= 2}^\infty f_n x^n,
\end{equation}
and we can expand $f(h(x))$ in powers of $x$ with coefficients depending on $\eta_j$, $1\leq j\leq M$: 
\begin{equation}\label{expand_f}
\begin{split}
f(h(x))&=\sum_{1\leq |n|\leq N} Ah_n(\eta)x^n+\sum_{|n|= 2}^r f_n \left(\sum_{1\leq |m|\leq N} h_m(\eta)x^m\right)^n+\mathcal{O}(|x|^{r+1}).
\end{split}
\end{equation}
Comparing powers of $x$ in $f(h(x))-h(Ax)$ and using \eqref{Ax}, we find that the invariance equation at $x^n$ becomes
\begin{equation}\label{coholn}
Ah_n(\eta(x))-\lambda^nh_n(\eta(Ax))=Q_n,
\end{equation}
where $Q_n$ is a logarithmic polynomial depending on the $h_m$'s for $|m|<|n|$. The support condition \eqref{asslemma} is preserved under the operations performed in \eqref{expand_f}: multiplication adds the monomial multi-indices and the logarithmic multiplicities, addition preserves the condition term by term, and the shift $\eta\mapsto\eta+v$ can only lower logarithmic powers. Thus, inductively, each logarithmic factor introduced at a resonant multi-index remains accompanied by the corresponding monomial support, and $Q_n$ satisfies \eqref{asslemma} as well. 
In particular, for $n=1@j$, $1\leq j\leq d$, we have that
\begin{equation}
Ah_{1@j}-\lambda_jh_{1@j}=0,
\end{equation}
which holds by assumption \eqref{pO1}.\\
We proceed inductively in solving equation \eqref{coholn}. To this end, we expand the polynomial $h_n$ in the eigenbasis $\{e_1,..e_s\}$ and write
\begin{equation}
    h_{n,j}(x)=\langle h_n(x),e_j\rangle,\quad 1\leq j\leq s. 
\end{equation}
Similarly, we write
\begin{equation}
    Q_{n,j}(x)=\langle Q_n(x),e_j\rangle,\quad 1\leq j\leq s.
\end{equation}
Since the components of $\eta$ are power functions of the form \eqref{def_eta}, they satisfy
\begin{equation}
    \eta_l(Ax) = \eta_l(x) + v_l,\quad 1\leq l\leq M,
\end{equation}
%\begin{equation}\Lambda=(\log|\lambda_1|,...,\log|\lambda_d|),\quad \eta=(\eta_1,...,\eta_M),\end{equation}
and equation \eqref{coholn} simply becomes 
\begin{equation}\label{eqpnj}
\lambda_jh_{n,j}(\eta)-\lambda^nh_{n,j}(\eta+v)=Q_{n,j}(\eta),\quad 1\leq j\leq s,
\end{equation}
for some shift vector $v=(v_1,...,v_M)$. 
Up to the first resonant index, the right-hand side of \eqref{eqpnj} are constants, i.e., logarithmic polynomials of degree zero and we can subsequently solve the shifted polynomial equation using Lemma \ref{shifted} for logarithmic polynomials $h_n$ of degree zero as well.\\
Let us take a closer look at what happens at one of the first resonant indices $n \in I_{\rm min}(\Lambda)$. In fact, there might be several resonant indices of minimal order and we may pick one of them for illustration:
\begin{equation}\label{eqfirstresonance}
    h_{n,j}(\eta) - h_{n,j}(\eta+v) = -(\lambda_j)^{-1} Q_{n,j},
\end{equation}
where we have used that $\lambda^n=\lambda_j$ for some j and $Q_{n,j}$ is still a constant. By Lemma \ref{shifted}, we know that any first-order polynomial $h_{n,j}(\eta) = h_{n,j,0}+h_{n,j,1}\eta$ with $ v\cdot h_{n,j,1} = (\lambda_j)^{-1} Q_{n,j} $ defines a solution to \eqref{eqfirstresonance}.
%Only those polynomials, however, which satisfy Condition \eqref{condition} guarantee that $p_{n_{\rm min}}x^{n_{\rm min}}$ is $C^\alpha$ for any $\alpha\in (0,1)$. The non-zero entries of $\pi_1$ thus correspond to those entries of $n_{\rm min}$ which are non-zero. Condition \eqref{condition} is preserved under taking powers and thus any higher-order terms satisfy the condition as well.
If the index $n$ is non-resonant, we can apply Lemma \ref{shifted} and find a polynomial solution to equation \eqref{eqpnj} of the same degree as $\deg Q_{n,j}$. If, on the other hand, the index $n$ is resonant, we can apply Lemma \ref{shifted} and find a polynomial solution to equation \eqref{eqpnj} of degree $\deg Q_{n,j}+1$.\\
%in the same variables as $Q_{n,j}+1$. This proves the claim. Condition \eqref{condition} is thus propagated to any order of the approximate solution to the invariance condition.\\
Finally, to prove the bound \eqref{bounddegree}, we proceed by induction over the order of the multi-index $n$. At order one, the bound is trivially satisfied by the normalization \eqref{pO1}. Assume now that the claim has been proven for any multi-index up to order $|n|$.\\
At order $k=|n|+1$, every nonlinear contribution to the right-hand side of \eqref{eqpnj} is a product of at least two previously constructed terms of orders $k_1,\ldots,k_j$ with $j\geq2$ and $k_1+\cdots+k_j=k$. By the induction hypothesis its logarithmic degree is therefore at most
\begin{equation}
(k_1-1)+\cdots+(k_j-1)=k-j\leq k-2=|n|-1.
\end{equation}
If the index is non-resonant, Lemma \ref{shifted} gives a solution of the same degree, hence degree at most $k-2$. If the index is resonant, the degree is raised by at most one, hence is at most $k-1$, which is exactly \eqref{bounddegree}.
\end{proof}

\begin{corollary}
Let $h^{<N}$ be a logarithmic polynomial of the form \eqref{hN}, solving the invariance equation up to order $N$ and let $r_*\geq 1$ be defined as in \eqref{def_r_star}. Then the approximate solution $h^{<N}$ is $C^{r_*,1-}$.
\end{corollary}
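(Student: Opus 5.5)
The plan is to reduce the corollary to Lemma \ref{lemma_regularity}, applied term by term to the finitely many summands of $h^{<N}$. By Proposition \ref{approx}, $h^{<N}=\sum_{1\le|n|<N}h_n(\eta)x^n$ is a finite sum. Each $h_n$ is a polynomial in $\eta=(\eta_1,\dots,\eta_M)$, and each $\eta_l=\log\mathfrak p_l$ is the logarithm of a power function. Since $C^{r_*,1-\varepsilon}$ is a vector space, it suffices to show that every summand $x^n h_n(\eta(x))$ belongs to $C^{r_*,1-\varepsilon}$ near the origin.

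The terms fall into two cases.

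\emph{Terms with constant coefficient.} If $h_n$ is constant, then $x^n h_n$ is a monomial, hence analytic.

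\emph{Terms with nonconstant coefficient.} If $h_n$ is nonconstant, I expand it into monomials $\eta_{l_1}\cdots\eta_{l_k}$. A logarithmic variable is introduced only at a resonant multi-index $m\in I_{\rm res}(\Lambda)$. In the construction, the support of the weights of the corresponding $\mathfrak p_l$ is chosen inside $\operatorname{supp}(m)$. I will choose it equal to $\operatorname{supp}(m)$, so that $\sum_{j\in S_l}m_j=|m|$.

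From the propagation argument in the proof of Proposition \ref{approx}, every term containing $\eta_l$ carries a monomial $x^n$ with $n\ge m$ componentwise. Hence, on $S_l$, we have $n_j\ge m_j$. This gives
\[
r_l(n):=\min\Big\{\sum_{j\in S_l}n_j-1,\;\min_{j\in S_l}n_j\Big\}\ \ge\ r(m)\ \ge\ r_*.
\]
I also need the exponents to satisfy $\nu_j>r_*+1$. Since all $\nu_j=\log\gamma/\log|\lambda_j|$ share one $\gamma$, taking $\gamma$ small makes every $\nu_j$ as large as needed. The exponents must in fact exceed each relevant $r+1$, but there are finitely many such indices below $N$, so a single small $\gamma$ works.

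\emph{Handling several logarithmic variables.} Lemma \ref{lemma_regularity} is stated for a single power function $\mathfrak p$, while a product $Q(\eta_{l_1},\dots,\eta_{l_k})$ may involve several. I would handle this by rerunning the proof of the lemma with $\Sigma=\bigcup_l\{x_j=0,\ j\in S_l\}$ and $S=\bigcup_l S_l$. The needed inputs are still available:
\begin{itemize}
\item the Faà di Bruno bounds $|\partial^m\eta_l|\le C\prod_{j\in S_l}|x_j|^{-m_j}$ hold for each $l$;
\item the logarithmic bounds multiply;
\item $S\subseteq\operatorname{supp}(n)$ holds because each $S_l\subseteq\operatorname{supp}(m_l)\subseteq\operatorname{supp}(n)$.
\end{itemize}
The only care point is the remaining-power condition $\sum_{j\in S}(n_j-m_j)\ge1$ for $|m|\le r_*$. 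This follows because $\sum_{j\in S}n_j\ge|m_l|\ge r_*+1$ for any single active $l$. The nonnegativity $n_j-m_j\ge0$ requires $\min_{j\in S}n_j\ge r_*$, which holds since $n_j\ge (m_l)_j\ge r(m_l)\ge r_*$ for $j\in S_l$.

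The main obstacle is verifying rigorously that the inheritance $n\ge m_l$ on $S_l$ really holds for every term produced by the recursion. I would establish this by induction alongside the proof of Proposition \ref{approx}. The base case: $\eta_l$ first appears at order $m_l$, multiplying $x^{m_l}$. The inductive step: products add multi-indices, so they preserve the inequality; the shift $\eta\mapsto\eta+v$ only lowers logarithmic degree; and solving via Lemma \ref{shifted} at a resonant index introduces a new variable exactly at that index.

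With this in hand, the multi-variable version of Lemma \ref{lemma_regularity} yields $C^{r_*,1-\varepsilon}$ for every summand, and hence for $h^{<N}$, for all $\varepsilon\in(0,1)$.
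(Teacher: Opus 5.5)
\textbf{There is a genuine gap, and it is shared with the paper.} You take the same route as the paper: apply Lemma~\ref{lemma_regularity} term by term, with $S_l=\operatorname{supp}(m_l)$ and $\gamma$ small. Your bookkeeping for several logarithmic variables and for the inheritance $n\ge m_l$ on $S_l$ is sound and more careful than the paper's one-line proof. (At a resonant index you must take the preimage under $S_{1,v}$ inside the ring of admissible variables, since Lemma~\ref{shifted} gives no uniqueness.)

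The gap is the step you treat as routine, namely rerunning the H\"older part of the lemma's proof. That part asserts $|\nabla\partial^m u|\le C_\alpha\rho^{\alpha-1}$ for $|m|=r$ ``by the same estimates''. But at order $r+1$ the bounds $|\partial^m\eta|\lesssim\prod_j|x_j|^{-m_j}$ only give total remaining power $\sum_{j\in S}(n_j-m_j)\ge 0$, with individual exponents possibly equal to $-1$. When the $\nu_j$ differ, these derivatives really are of size $\rho^{-1}$ or worse. Taking $\gamma$ small cannot help, because \eqref{nu_form} fixes $\nu_j/\nu_k=\log|\lambda_k|/\log|\lambda_j|$ independently of $\gamma$. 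So the failure is already present for a single power function. Your inputs (Fa\`a di Bruno bounds and the remaining-power condition, with the logarithmic bound taken as $|\eta|\lesssim 1+|\log\rho|$) control derivatives of order $\le r_*$, giving $C^{r_*}$. They do not control the H\"older seminorm of order $1-\varepsilon$.

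\textbf{Counterexample from the paper's own third example.} Take the time-one map $f(x,y,z)=(e^{-1}x,e^{-2}y,e^{-3}(z+xy))$ with $X_1$ the $(x,y)$-plane, so $r_*=1$. Choose $h_1=x$ and $h_2=y$, which the construction permits. The cohomological equation at $n=(1,1)$ then forces the third component of $h^{<N}$, for $N\ge 3$, to contain $xy\,\eta/\log\gamma$, where $\eta=\log(p_1|x|^{2c}+p_2|y|^{c})$ and $c=-\tfrac12\log\gamma>2$. All other terms are smooth. For $u=xy\eta$ with $p_1=p_2=1$ and $x=t>0$ fixed, one computes
\[
\partial_yu(t,t^2)-\partial_yu(t,0)=\bigl(\log 2+\tfrac{c}{2}\bigr)\,t .
\]
The two points are only $t^2$ apart, so the $C^{0,\alpha}$ quotient of $\partial_yu$ is $\asymp t^{1-2\alpha}$, which is unbounded for $\alpha>1/2$. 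Equivalently, $\partial_y^2u(t,t^2)=(c/2+c^2/4)\,t^{-1}\asymp\rho^{-1}$.

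Hence $h^{<N}\notin C^{1,\alpha}$ for $\alpha>1/2$, even though $u$ satisfies every hypothesis of Lemma~\ref{lemma_regularity} with $r=1$. The lemma is false, and with it the corollary as stated. No rerun of its proof can close your argument.

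\textbf{The obstruction is intrinsic, not an artifact of the choice of $\mathfrak p$.} Any $g$ with $g(e^{-1}x,e^{-2}y)=e^{-3}(g+xy)$ satisfies
\[
g_y(e^{-k}x,e^{-2k}y)-g_y(e^{-k}x,0)=e^{-k}\bigl(g_y(x,y)-g_y(x,0)\bigr).
\]
A $C^{1,\alpha}$ bound with $\alpha>1/2$ therefore forces $g=a(x)+b(x)y$. Then $b'(e^{-1}x)=b'(x)+1$, which contradicts $g\in C^1$. What survives is $C^{r_*}$ with a H\"older exponent limited by the spectral ratios, here at most $1/2$.
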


\begin{proof}
   This is an immediate consequence from Proposition \ref{approx} and Lemma \ref{lemma_regularity}. The power functions defining $\eta_l$ have to be chosen such that the support of the weights matches the non-zero entries of the resonant indices, see assumption \eqref{asslemma}, while the shifts $v_l$ have to be chosen in accordance with the largeness assumption \eqref{ass_nu_large} on the exponents $\nu_j$. This can always be achieved choosing $\gamma$ in \eqref{nu_form} sufficiently small. 
\end{proof}

%\begin{remark}    The index $\alpha$ in the norm just tells us that we can control the logarithmic polynomial appearing at the lowest order in $x^n$ of the remainder. \end{remark}

%The logarithmic polynomials appearing in the approximate solution to the invariance equation have more structure.

%\begin{remark}    Derivatives are not necessarily zero. The regularity of the manifold is determined by the appearance of the first resonance. \end{remark}

%\begin{remark}Let us remark that, even in the presence of resonances, logarithmic contributions only enter the approximation \eqref{hN}, i.e., $p_n\neq 1$, if the  corresponding resonant Taylor coefficient of the  map $f$ is non-zero.\end{remark}

%\begin{proposition}assume that \textcolor{red}{$f:\mathbb{R}^{2n}\to\mathbb{R}^{2n}$ is $f|_{\mathbb{R}^n}$ is r-times continuously differentiable with}  $r\geq 2$ \todo{r=1?}(i.e. admits admits a Taylor expansion up to order $r$)\end{proposition}

\section{Proof of the Existence of Resonant Invariant Manifolds}\label{existence}

In this section, we pass from the finite-order logarithmic approximation constructed above to an exact invariant manifold. The argument has two logically separate parts. First, after all resonant orders have been incorporated into the approximate solution, the remaining tail starts at an order $N$ beyond every resonance on which the linear conjugacy operator is invertible with a uniform bound. Second, the nonlinear remainder is small on a sufficiently small neighborhood, so the corrected invariance equation is a contraction. This is the analytic step that turns the algebraic approximate solution into a genuine one. \\
Traditionally, the convergence of the Taylor expansion of analytic invariant manifolds about an equilibrium is proved using the method of majorants \cite{moser1956analytic,moser2016stable}, a technique that traces back to the pioneering works of Hadamard \cite{hadamard1901} and Perron \cite{perron1929stabilitat}. A key ingredient of this approach is the simple yet crucial observation that the coefficients appearing in the Faa di Bruno formula are nonnegative \cite{comtet2012advanced,siegel2012lectures}. More recently, modern treatments reformulate the invariance equation as a fixed-point problem and establish existence results in suitably chosen function spaces \cite{figueras2017framework,hirsch1970invariant}.\\

\subsection{Invertibility of the Linear Conjugacy Operator}

For an $s\times s$ semisimple matrix $A$ and an $A$-invariant $d$-dimensional subspace $X_1\subseteq \mathbb{C}^s$, define the operator $\mathcal{S}_A$, acting on functions $H:B^1_{d}\to\mathbb{C}^s$, $B^1_{d}\subset X_1$, as
	\begin{equation}\label{defS}
	\mathcal{S}_AH(x):=AH(x)-H(A|_{X_1}x).
	\end{equation}
The operator $\mathcal{S}_A$ is the linearization, with respect to the correction $H$, of the invariance-linearization equation around the linear dynamics and defines the vector-valued analogue of \eqref{defSav}. Its properties on the Banach space $\Gamma_N^d$ are summarized in the following lemma.

\begin{proposition}\label{Lemma_SM}
Let $A\in\mathbb C^{s\times s}$ be semisimple and invertible and let $\lambda_1,\ldots,\lambda_s$ denote the eigenvalues of $A$, counted with
multiplicity. Let
$X_1\subset\mathbb C^s$ be a $d$-dimensional $A$-invariant subspace associated with
eigenvalues $\lambda_1,\ldots,\lambda_d$, and assume
\begin{equation}
        |\lambda_i|<1,
        \qquad
        1\leq i\leq d.
\end{equation}
Suppose $N$ is chosen so large that
\begin{equation}\label{nonres_tail}
        \lambda^n\ne \lambda_j,
        \qquad
        |n|\geq N,
        \qquad
        1\leq j\leq s.
\end{equation}
Then, for $\beta>0$ sufficiently small, the operator
\begin{equation}
        \mathcal S_A:\Gamma_N^d\to\Gamma_N^d,
        \qquad
        \mathcal S_A H(x)=AH(x)-H(Ax),
\end{equation}
is bounded and invertible. Moreover, there exists $q_*\in(0,1)$ such that
\begin{equation}\label{SA_inverse_bound}
        \|\mathcal S_A^{-1}\|_{\rm op}
        \le
        \frac{1}{1-q_*}
        \frac{1}{\min_{1\leq j\leq s}|\lambda_j|}.
\end{equation}
\end{proposition}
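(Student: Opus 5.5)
The plan is to diagonalize $\mathcal S_A$ completely. Because the norm on $\Gamma_N^d$ is a weighted $\ell^1$ sum over multi-indices, we reduce to single monomial orders $n$ and single eigencomponents $j$.

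\textbf{Reduction to scalar shifted equations.} Write $H=\sum_{|n|\geq N}H_n(\eta)x^n$ and use the eigenbasis coordinates. Since $(Ax)^n=\lambda^n x^n$ and $\eta(Ax)=\eta(x)+v$, the operator acts coefficientwise:
\begin{equation*}
(\mathcal S_AH)_{n,j}(\eta)=\lambda_jH_{n,j}(\eta)-\lambda^nH_{n,j}(\eta+v)=\lambda_j\,S_{a,v}[H_{n,j}](\eta),\qquad a=a_{n,j}:=\lambda^n/\lambda_j .
\end{equation*}
So $\mathcal S_A$ preserves each summand of the norm \eqref{defnorm}. Its boundedness follows from \eqref{boundnormTv}: the $n$-th block has norm at most $\max_j|\lambda_j|+|\lambda^n|e^{\beta|n||v|}$, where the weight $|n|\beta$ is the one used for $H_n$. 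This is uniformly bounded once $\beta|v|<\log(1/\rho)$ with $\rho=\max_{i\leq d}|\lambda_i|$, because then $|\lambda^n|e^{\beta|n||v|}\leq(\rho e^{\beta|v|})^{|n|}\leq 1$. A caveat: the shift estimate \eqref{boundnormTv} was stated for polynomials, but the same computation works verbatim for analytic functions on the strip $\mathfrak T_\sigma$ with a real shift $v$. Invertibility therefore reduces to inverting each $S_{a_{n,j},v}$ with a bound uniform in $(n,j)$. The inverse is then obtained blockwise, and the $\ell^1$ structure gives $\|\mathcal S_A^{-1}\|_{\rm op}\leq\sup_{n,j}|\lambda_j|^{-1}\|S_{a_{n,j},v}^{-1}\|$.

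\textbf{Uniform inversion.} By \eqref{nonres_tail}, $a_{n,j}\neq1$ for $|n|\geq N$. Since $|\lambda^n|\leq\rho^{|n|}\to0$, only finitely many $n$ fail $|a_{n,j}|e^{\beta|n||v|}\leq q_*<1$ for any fixed $q_*$. I would absorb these by enlarging $N$. The statement only says $N$ is "large enough", and Lemma \ref{lemma_finite_resonances} shows the resonances are finite, so we may assume
\begin{equation*}
N\geq\frac{\log\bigl(q_*\min_j|\lambda_j|\bigr)}{\log\rho}+1
\end{equation*}
with $\beta$ chosen so that $\rho e^{\beta|v|}$ is still less than one. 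Then for every $|n|\geq N$ and every $j$ we have $|a_{n,j}|e^{\beta|n||v|}\leq(\rho e^{\beta|v|})^{|n|}/\min_j|\lambda_j|\leq q_*$. The Neumann series bound \eqref{boundSavinv}, with weight $|n|\beta$ and shift $v$ (equivalently $\beta$ replaced by $|n|\beta$), then gives $\|S_{a_{n,j},v}^{-1}\|\leq 1/(1-q_*)$. This yields \eqref{SA_inverse_bound}.

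\textbf{Main obstacle.} The delicate point is that the weight grows with $|n|$, so $e^{\beta|n||v|}$ competes with $|\lambda^n|\leq\rho^{|n|}$. This is why $\beta$ must be small relative to $\log(1/\rho)/|v|$; the condition is independent of $n$, which makes the bound uniform. A second concern arises if one insists on the given $N$ rather than enlarging it. Then the finitely many non-resonant blocks with $|a_{n,j}|$ close to $1$ must instead be handled by the nilpotent formula \eqref{inversexplicit}. That formula requires finite logarithmic degree, which Proposition \ref{approx} supplies through \eqref{bounddegree}. In that case $q_*$ would be defined as the maximum over these finitely many blocks. I would present the enlargement argument as the main route and mention this alternative.
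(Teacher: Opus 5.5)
Your main route is the paper's proof: you reduce coefficientwise to $\lambda_j S_{a_{n,j},v}$, enlarge $N$ and shrink $\beta$ so that $|a_{n,j}|e^{\beta|n||v|}\le q_*<1$, apply the Neumann bound \eqref{boundSavinv} with weight $|n|\beta$, and sum in the $\ell^1$-type norm; the paper does exactly this in \eqref{qstar_condition}, tacitly going beyond \eqref{nonres_tail}. Two small remarks: fix $\beta$ first and use $\rho e^{\beta|v|}$ in place of $\rho$ in your displayed choice of $N$, and note that your fixed-$N$ fallback via \eqref{inversexplicit} fails on $\Gamma_N^d$ as defined, because its coefficients are arbitrary analytic functions on $\mathfrak{T}_\sigma$, and when $|a_{n,j}|=1\neq a_{n,j}$ any bounded $e^{\mathrm{i}\omega\cdot\eta}$ with $\omega\in\mathbb{R}^M$ and $e^{\mathrm{i}\omega\cdot v}=a_{n,j}^{-1}$ lies in the kernel of $S_{a_{n,j},v}$, so enlarging $N$ is genuinely necessary.
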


\begin{proof}
Choose coordinates on $X_1$ along eigenvectors of $A|_{X_1}$, and coordinates in
$\mathbb C^s$ along eigenvectors of $A$. Thus
\begin{equation}
        Ax=(\lambda_1x_1,\ldots,\lambda_dx_d),
        \qquad
        AH_n=(\lambda_1H_{n,1},\ldots,\lambda_sH_{n,s}).
\end{equation}
Let
\begin{equation}
        H(x)=\sum_{|n|\geq N}H_n(\eta)x^n,
        \qquad
        \widetilde H(x)=\sum_{|n|\geq N}\widetilde H_n(\eta)x^n.
\end{equation}
We consider the equation
\begin{equation}
        \mathcal S_AH=\widetilde H
\end{equation}
and assume again that the basis functions have been chosen such that $\eta(Ax)=\eta(x)+v$. The coefficient equation then becomes
\begin{equation}\label{coeff_SA}
        \lambda_j H_{n,j}(\eta)
        -
        \lambda^n H_{n,j}(\eta+v)
        =
        \widetilde H_{n,j}(\eta),
        \qquad
        |n|\geq N,
        \qquad
        1\leq j\leq s,
\end{equation}
or, equivalently,
\begin{equation}\label{coeff_shift}
        H_{n,j}(\eta)
        -
        a_{n,j}H_{n,j}(\eta+v)
        =
        \lambda_j^{-1}\widetilde H_{n,j}(\eta),
        \qquad
        a_{n,j}=\frac{\lambda^n}{\lambda_j}.
\end{equation}
We now choose $N$ and $\beta$ to guarantee invertibility of $\mathcal{S}_A$. Since
\begin{equation}
        \max_{1\leq i\leq d}|\lambda_i|<1,
\end{equation}
there exist $N$ sufficiently large and $\beta>0$ sufficiently small such that
\begin{equation}\label{qstar_condition}
        q_*
        :=
        \sup_{\substack{|n|\geq N\\1\leq j\leq s}}
        \left|
        \frac{\lambda^n}{\lambda_j}
        \right|
        e^{\beta |v||n|}
        <1.
\end{equation}
Indeed,
\begin{equation}
        \left|
        \frac{\lambda^n}{\lambda_j}
        \right|
        e^{\beta |v||n|}
        \le
        \frac{
        \left(\max_{1\leq i\leq d}|\lambda_i|\right)^{|n|}
        e^{\beta |v||n|}
        }{
        \min_{1\leq j\leq s}|\lambda_j|
        },
\end{equation}
and the right-hand side tends to zero as $|n|\to\infty$ if $\beta>0$ is chosen so small that
\begin{equation}
        e^{\beta |v|}\max_{1\leq i\leq d}|\lambda_i|<1.
\end{equation}
By \eqref{nonres_tail}, $a_{n,j}\ne1$ for all $|n|\geq N$ and $1\leq j\leq s$.
The shifted-polynomial estimate applied to \eqref{coeff_shift}, with shift vector $v$, gives
\begin{equation}\label{coeff_inverse_bound}
        \|H_{n,j}\|_{\beta |n|}
        \le
        \frac{1}{1-|a_{n,j}|e^{\beta |v||n|}}
        \frac{1}{|\lambda_j|}
        \|\widetilde H_{n,j}\|_{\beta |n|}.
\end{equation}
Using \eqref{qstar_condition}, we obtain
\begin{equation}
        \|H_{n,j}\|_{\beta |n|}
        \le
        \frac{1}{1-q_*}
        \frac{1}{\min_{1\leq j\leq s}|\lambda_j|}
        \|\widetilde H_{n,j}\|_{\beta |n|}.
\end{equation}
Summing over $|n|\geq N$ and over $1\leq j\leq s$ yields
\begin{equation}
        \|H\|_{\Gamma_N^d,\beta,\sigma}
        \le
        \frac{1}{1-q_*}
        \frac{1}{\min_{1\leq j\leq s}|\lambda_j|}
        \|\widetilde H\|_{\Gamma_N^d,\beta,\sigma}.
\end{equation}
Therefore $\mathcal S_A^{-1}$ exists and satisfies \eqref{SA_inverse_bound}. Boundedness
of $\mathcal S_A$ follows directly from the definition of the norm, the finite-dimensional
boundedness of $A$, and the shift estimate for $\eta\mapsto\eta+v$. This proves the claim.
\end{proof}

\begin{remark}
    The invertibility of the operator $\mathcal{S}_A$, where $A$ is defined in \eqref{defA}, will be crucial in the reformulation of the invariance equation as a fixed-point problem in the next section. To guarantee invertibility, we have to assume that the approximation constructed in Proposition \ref{approx} is such that it includes all resonant indices.
    %This is necessary to guarantee the third assumption in Lemma \ref{LemmaSM}. 
    In particular, once an approximation is fixed, the remainder constructed by the fixed point argument will be unique. We further note that all the bounds used in the proof of Proposition \ref{Lemma_SM} can be reformulated in terms of the matrix norms of $A$ and $A|_{X_1}$ instead of their eigenvalues. Indeed, thanks to the Gelfand Theorem, we may always choose an adapted norm which is arbitrarily close to the spectral radius of both matrices. 
    %This rules out the possibility that resonances with larger index could violate the uniqueness of the constructed invariant manifold. 
\end{remark}

\subsection{Reformulation of the Invariance Equation as a Fixed-Point Problem}

We will assume that $x\mapsto h(x)$ is defined on a small ball $B_{\delta,d}^\mathbb{C}$, which, after re-scaling $x\mapsto \delta x$ and setting $h^\delta(x)=h(\delta x)$, amounts to the re-scaled invariance equation:
\begin{equation}\label{coholdelta}
Ah^\delta(x)+N_f(h^\delta(x))=h^\delta(Ax).
\end{equation}
Dividing equation \eqref{coholdelta} by $\delta$ and setting $h(x)=h^\delta(x)/\delta$ by abuse of notation, the invariance equation becomes
\begin{equation}\label{invariance_expand_f}
Ah(x)+\delta N_f^\delta (h(x))=h(Ax),
\end{equation}
where
\begin{equation}\label{N_f_expand}
N_f^\delta(x)=\sum_{|n|=2}^\infty \delta^{|n|-2} f_nx^n,
\end{equation}
is the re-scaled nonlinear part of $f$. In particular, we can assume, by choosing $\delta$ small, that the nonlinear part of the invariance equation is as small as we like in an appropriately chosen norm, while the parametrization $h$ is defined on the unit ball $B_{1,d}^\mathbb{C}$.\\
Assume now that we have found an approximate solution $h^{< N}$ of order $N$, according to Proposition \ref{approx} and Proposition \ref{Lemma_SM}. In the presence of resonances, where there is no longer a unique approximate solution to the invariance equation, any member of the family of approximate solutions will do. We write
\begin{equation}
h(x)=h^{< N}(x)+H(x),
\end{equation}
for an unknown function $H$. Since the approximation is a logarithmic polynomial of degree $N-1$, the remainder $H$ is assumed to be a logarithmic polynomial with leading polynomial contribution of order $N$.
%Since $|x|^{-N-1}H(x)$ contains, in general, logarithmic terms, we can only guarantee that\todo[inline, color = red]{Include leading order behavior of $H$}
We can split the invariance equation \eqref{invariance} as
\begin{equation}
f(h^{< N}(x)+H(x))=h^{< N}(Ax)+H(Ax),
\end{equation}
which, after expanding the map $f$ according to \eqref{invariance_expand_f},
\begin{equation}\label{invariance_rewritten}
    A h^{<N}+ AH + \delta N_f^\delta\circ(h^{< N}+H) = h^{< N}\circ A+H\circ A.
\end{equation}
Regrouping and using the definition of the operator $\mathcal{S}_A$ defined in \eqref{defS}, equation \eqref{invariance_rewritten} reads 
\begin{equation}\label{coholH}
\mathcal{S}_AH = h^{< N}\circ A-Ah^{< N} -\delta N_f^\delta\circ(h^{< N}+H). 
\end{equation}
By Proposition \ref{Lemma_SM}, we can invert the operator $\mathcal{S}_A$ in the space $\Gamma_{N}^d$ and rewrite equation \eqref{coholH} as the fixed point problem
\begin{equation}
H=\mathcal{T}_\delta(H,h^{< N}),
\end{equation}
for the functional
\begin{equation}\label{defT}
\mathcal{T}_\delta(H,h^{< N}) =\mathcal{S}_A^{-1}\Big(h^{< N}\circ A-Ah^{< N} -\delta N_f^\delta(h^{< N}+H)\Big).
\end{equation}
We split the right-hand side into a fixed residual and a nonlinear tail:
\begin{equation}
        \tau_\delta
        =
        h^{<N}\circ A-Ah^{<N}
        -
        \delta N_f^\delta(h^{<N}),
\end{equation}
and
\begin{equation}
        \mathcal N_\delta(H)
        =
        -\delta\left[
        N_f^\delta(h^{<N}+H)-N_f^\delta(h^{<N})
        \right],
\end{equation}
such that the operator \eqref{defT} reads
\begin{equation}
    \mathcal{T}_\delta(H,h^{<N}) =  \mathcal{S}_A^{-1}\left(\tau_\delta+\mathcal N_\delta(H)\right)
\end{equation}

\begin{remark}
    Since the approximate solution $h^{< N}$ is not unique in the presence of resonances, the functional \eqref{defT} will vary for different approximate solutions. Once an approximation is fixed, however, the fixed-point of $\mathcal{T}_\delta$ will be unique. 
\end{remark}

Before we state the main result of this section about fixed-points of the functional \eqref{defT}, corresponding to invariant manifolds conjugated to the linear part, we collect some functional analytic properties of $\mathcal{T}_\delta$. The following lemma is the standard regularity for affine tail composition, see also \cite{hille1996functional} for further details. 

%the \emph{Nemytskii operator} and is included for completeness, see also \cite{hille1996functional} for further details.

\begin{lemma}[Affine tail composition]\label{lemma_affine_tail}
Let $h^{<N}$ be a fixed logarithmic polynomial with
\begin{equation}
        h^{<N}(x)=Lx+O(|x|^2),
\end{equation}
and let $H\in\Gamma_N^d$. Let $N_f^\delta$ be analytic near the origin and satisfy
\begin{equation}
        N_f^\delta(z)=O(|z|^2).
\end{equation}
Then
\begin{equation}\label{N_delta}
        \mathcal N_\delta(H)
        =
        -\delta\left[
        N_f^\delta(h^{<N}+H)-N_f^\delta(h^{<N})
        \right]
\end{equation}
belongs to $\Gamma_N^d$. Moreover, for every $R>0$ sufficiently small, there exists
$C_R>0$ such that, for all $H_1,H_2\in\Gamma_N^d$ with
\begin{equation}
        \|H_i\|_{\Gamma_N^d,\beta,\sigma}\leq R,
        \qquad i=1,2,
\end{equation}
one has
\begin{equation}\label{bound_difference_N_delta}
        \|\mathcal N_\delta(H_1)-\mathcal N_\delta(H_2)\|_{\Gamma_N^d,\beta,\sigma}
        \le
        \delta C_R
        \|H_1-H_2\|_{\Gamma_N^d,\beta,\sigma}.
\end{equation}
\end{lemma}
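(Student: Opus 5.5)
The plan is to expand $N_f^\delta$ in its convergent Taylor series and control every term with the Banach algebra inequality for $\Gamma_N^d$ established in Section \ref{notation}. I will not work in $\Gamma_N^d$ alone but in the full space $\Gamma^d=\Gamma^d_1$, where $h^{<N}$ also has finite norm: it is a finite sum of logarithmic monomials whose coefficients $h_n$ are polynomials, so $\|h_n\|_{|n|\beta,\sigma}<\infty$ for every $\beta>0$. The product estimate proved there only uses the splitting $\beta|k|=\beta|n|+\beta|m|$, so it holds verbatim on $\Gamma^d$ with the norm summed from order $1$. It also respects orders: a product in which at least one factor lies in $\Gamma_N^d$ lies in $\Gamma_N^d$, and its $\Gamma^d_N$-norm is bounded by the $\Gamma^d_N$-norm of that factor times the $\Gamma^d$-norm of the other. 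Set $\rho:=\|h^{<N}\|_{\Gamma^d,\beta,\sigma}$.

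The first step is to write $N_f^\delta(z)=\sum_{|k|\ge2}\delta^{|k|-2}f_k z^k$ with $|f_k|\le C r_0^{-|k|}$ by Cauchy estimates on a polydisc of radius $r_0$. Componentwise, $z^k$ becomes a product of $|k|$ factors in the algebra. For $z=h^{<N}+H_i$, I apply the telescoping identity
\[
a^k-b^k=\sum_{\ell} (\text{products with one factor } (a_\ell-b_\ell) \text{ and the rest from } a \text{ or } b).
\]
Taking $a=h^{<N}+H_1$ and $b=h^{<N}+H_2$, every summand contains the factor $H_1-H_2\in\Gamma_N^d$. By the order-respecting algebra estimate, each summand then lies in $\Gamma_N^d$ and
\[
\|(h^{<N}+H_1)^k-(h^{<N}+H_2)^k\|_{\Gamma_N^d}\le |k|\,(\rho+R)^{|k|-1}\|H_1-H_2\|_{\Gamma_N^d}.
\]
Taking $H_2=0$ gives membership of $\mathcal N_\delta(H)$ in $\Gamma_N^d$. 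Summing over $k$ gives \eqref{bound_difference_N_delta} with
\[
C_R=\sum_{|k|\ge2}C r_0^{-|k|}\delta^{|k|-2}|k|(\rho+R)^{|k|-1}.
\]
This series converges provided $\rho+R<r_0$ (for $\delta\le1$), and the overall prefactor $\delta$ comes from \eqref{N_delta}.

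The main obstacle is this smallness requirement $\rho+R<r_0$. In particular, $\rho$ contains the linear term $\|L\|\approx\sum_l|v_l|$, which need not be small relative to $r_0$. I would resolve this with the rescaling already built into \eqref{coholdelta}. After replacing $h$ by $h(\delta x)/\delta$, the function $N_f^\delta$ is analytic on a polydisc of radius $\sim r_0/\delta$, so its effective Cauchy radius grows like $r_0/\delta$. Meanwhile $\rho$ stays bounded, since the rescaled coefficients $\delta^{|n|-1}h_n$ only shrink. Hence for $\delta$ small, and $R$ below a fixed threshold (this is the ``$R$ sufficiently small'' in the statement), the series defining $C_R$ converges uniformly in $\delta$.

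A secondary point to check is that composing with the logarithmic variables causes no trouble. The norm acts only on the coefficient functions $H_n(\eta)$, and products of coefficients are products of functions of the same variable $\eta$ on $\mathfrak T_\sigma$, so no realization issue arises at this stage.
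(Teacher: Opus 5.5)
Your proposal is correct and follows essentially the same route as the paper. In both, $N_f^\delta$ is expanded in its power series (the paper uses symmetric multilinear forms $B_k^\delta$ and a binomial expansion where you telescope monomials), every term of the difference carries one factor $H$ or $H_1-H_2$ in $\Gamma_N^d$, and the bound follows from the Banach-algebra estimate together with bounded multiplication by $h^{<N}$. Your explicit handling of convergence through the factors $\delta^{|k|-2}$, which is needed because $\|L\|$ need not be small, is exactly what the paper leaves implicit in ``choosing $R$ and the domain sufficiently small'' and later in the uniform-in-$\delta$ bound in the proof of Proposition~\ref{propfixedpoint}.
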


\begin{proof}
Since $N_f^\delta$ is analytic and vanishes to second order, we may write
\begin{equation}
        N_f^\delta(z)
        =
        \sum_{k\ge2} B_k^\delta(z,\ldots,z),
\end{equation}
where $B_k^\delta$ are symmetric $k$-linear maps. Therefore
\begin{equation}
        N_f^\delta(h^{<N}+H)-N_f^\delta(h^{<N})
        =
        \sum_{k\ge2}
        \sum_{\ell=1}^k
        \binom{k}{\ell}
        B_k^\delta(
        \underbrace{H,\ldots,H}_{\ell},
        \underbrace{h^{<N},\ldots,h^{<N}}_{k-\ell}).
\end{equation}
Every term on the right-hand side contains at least one factor $H$. Since
$H\in\Gamma_N^d$ starts at order $N$, and multiplication by the fixed logarithmic
polynomial $h^{<N}$ preserves the tail order $N$, each term belongs to $\Gamma_N^d$.\\
The Banach-algebra property of $\Gamma_N^d$, together with boundedness of multiplication by
the fixed polynomial $h^{<N}$, gives convergence of the above series for
$\|H\|_{\Gamma_N^d,\beta,\sigma}\leq R$, after choosing $R$ and the domain sufficiently small.
Similarly, subtracting the two expansions for $H_1$ and $H_2$, every difference factors
one copy of $H_1-H_2$. Hence
\begin{equation}
        \|\mathcal N_\delta(H_1)-\mathcal N_\delta(H_2)\|_{\Gamma_N^d,\beta,\sigma}
        \le
        \delta C_R
        \|H_1-H_2\|_{\Gamma_N^d,\beta,\sigma}.
\end{equation}
This proves the claim.
\end{proof}

\begin{proposition}\label{propfixedpoint}
Let the assumptions of Theorem \ref{mainthm} and Proposition \ref{Lemma_SM} with
$\beta$ sufficiently small be met. Assume further that the invariance equation has been
scaled according to \eqref{coholdelta} for $\delta>0$ sufficiently small. Then, for any
approximate solution $h^{<N}$, with $N\in\mathbb N$ chosen larger than the order of every resonant index, which is possible by Lemma~\ref{lemma_finite_resonances}) and sufficiently large for Proposition~\ref{Lemma_SM}, the functional $\mathcal T_\delta(\cdot,h^{<N})$ is a contraction from the
closed unit ball $B^1(\Gamma_N^d)$ into itself and thus has a unique fixed point.
\end{proposition}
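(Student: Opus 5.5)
The plan is to run the standard Banach fixed-point argument on $\mathcal T_\delta(H,h^{<N})=\mathcal S_A^{-1}(\tau_\delta+\mathcal N_\delta(H))$ in the closed unit ball $B^1(\Gamma_N^d)$. We need two estimates: a self-mapping bound and a Lipschitz bound. Both reduce to the uniform bound \eqref{SA_inverse_bound} on $\mathcal S_A^{-1}$ from Proposition \ref{Lemma_SM}, combined with smallness in $\delta$. Write $C_S=\frac{1}{1-q_*}\frac{1}{\min_j|\lambda_j|}$. This constant depends only on $A$, $N$ and $\beta$, not on $\delta$, because the rescaling \eqref{coholdelta} leaves the linear part unchanged. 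Throughout, $N$ is fixed first, larger than every resonant order (Lemma \ref{lemma_finite_resonances}) and large enough for \eqref{qstar_condition}. Then $\beta$ is fixed, and only afterwards $\delta$ is taken small.

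\emph{Step 1: the residual lies in $\Gamma_N^d$ and is $O(\delta)$.} By Proposition \ref{approx}, applied to the rescaled map $x\mapsto Ax+\delta N_f^\delta(x)$, we take $h^{<N}$ to be the logarithmic-polynomial approximation of that rescaled problem. Then $\tau_\delta=h^{<N}\circ A-Ah^{<N}-\delta N_f^\delta(h^{<N})$ equals $-R_N$ and has only terms of order $|n|\geq N$, so $\tau_\delta\in\Gamma_N^d$. The linear terms $h^{<N}\circ A-Ah^{<N}$ cancel identically at every order $<N$ and contribute nothing at order $\geq N$, since $h^{<N}$ has degree $<N$ and the shift $\eta\mapsto\eta+v$ preserves the monomial order. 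Hence $\tau_\delta=-\delta N_f^\delta(h^{<N})$ restricted to orders $\geq N$.

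Two facts give $\|\tau_\delta\|_{\Gamma_N^d,\beta,\sigma}\leq C\delta$. First, $h^{<N}$ is a finite sum whose coefficients $h_n$ have bounded $\|\cdot\|_{|n|\beta,\sigma}$-norms: they are polynomials of degree $\leq |n|-1$ by \eqref{bounddegree}, and $e^{-|n|\beta|\Re y|}$ dominates polynomial growth. Second, $N_f^\delta$ has coefficients $\delta^{|n|-2}f_n$, which are summable in the $\ell^1$ sense for $\delta$ below the radius of analyticity. The Banach-algebra estimate then gives convergence of $N_f^\delta(h^{<N})$ in the full space $\Gamma^d$. One subtlety is that the coefficients of $h^{<N}$ must themselves depend on $\delta$. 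I would handle this by noting that the rescaled coefficients are $\delta^{|n|-1}h_n$, which stay bounded as $\delta\to0$.

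\emph{Step 2: contraction and invariance of the ball.} Lemma \ref{lemma_affine_tail} with $R=1$ gives $\|\mathcal N_\delta(H_1)-\mathcal N_\delta(H_2)\|\leq \delta C_1\|H_1-H_2\|$, and $\mathcal N_\delta(0)=0$. Hence
\[
\|\mathcal T_\delta(H_1)-\mathcal T_\delta(H_2)\|\leq C_S C_1\delta\,\|H_1-H_2\|,
\]
and for $\|H\|\leq1$,
\[
\|\mathcal T_\delta(H)\|\leq C_S(C\delta+C_1\delta).
\]
Choosing $\delta$ so that $C_S(C+C_1)\delta<1$ makes $\mathcal T_\delta$ map the closed unit ball into itself, and likewise $C_SC_1\delta<1$ makes it a contraction. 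The ball is closed in the Banach space $\Gamma_N^d$, so the Banach fixed-point theorem yields a unique fixed point there.

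\emph{Main obstacle.} The delicate point is the uniformity in $\delta$ of the constants $C$ and $C_1$. Lemma \ref{lemma_affine_tail} provides $C_R$ for the fixed, rescaled $N_f^\delta$, and I must check that it does not blow up as $\delta\to0$. I would argue this through the bound $\sum_k\|B_k^\delta\|\,k(\|h^{<N}\|+1)^{k-1}$ with $\|B_k^\delta\|\leq\delta^{k-2}\|f\|_k$. This series is majorized by the derivative of an analytic majorant series and stays bounded uniformly for $\delta\leq\delta_0$. The bound requires $\|h^{<N}\|_{\Gamma^d}$ to be bounded uniformly in $\delta$, which is what the scaling remark in Step 1 supplies.
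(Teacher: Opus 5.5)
Your proposal is correct and follows essentially the same route as the paper. Both arguments apply Banach's fixed-point theorem on $B^1(\Gamma_N^d)$, using the $\delta$-independent bound \eqref{SA_inverse_bound} on $\mathcal S_A^{-1}$, the Lipschitz estimate $\delta C_1$ from Lemma~\ref{lemma_affine_tail} together with $\mathcal N_\delta(0)=0$, and uniformity in $\delta$ via $B_k^\delta=\delta^{k-2}B_k$ and the scaling $\delta^{k-1}\widehat h_k$ of the coefficients of $h^{<N}$. The only difference is that you bound the residual by $C\delta$ instead of the paper's sharper $C_\tau\delta^{N-1}$, which is all the self-mapping estimate needs; and your use of the factor $\delta^{k-2}$ to keep the majorant series convergent at radius $\|h^{<N}\|+1$ is, if anything, slightly more careful than the paper's bound $\|B_k^\delta\|\leq\|B_k\|$.
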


\begin{proof}
To avoid cluttering the notation, we write
\begin{equation}
        \|\cdot\|=\|\cdot\|_{\Gamma_N^d,\beta,\sigma}.
\end{equation}
By Lemma \ref{lemma_affine_tail}, $\mathcal N_\delta(H)\in\Gamma_N^d$ for
$H\in\Gamma_N^d$, and $\tau_\delta\in\Gamma_N^d$ because $h^{<N}$ solves the scaled
invariance equation up to order $N-1$. Thus $\mathcal T_\delta$ is well-defined as a map
from $\Gamma_N^d$ to itself.

We first prove that $\mathcal T_\delta$ is a contraction on $B^1(\Gamma_N^d)$. Let
$H,\widetilde H\in B^1(\Gamma_N^d)$. By Lemma \ref{lemma_affine_tail}, applied with
$R=1$, there exists a constant $C_1>0$ such that
\begin{equation}\label{contraction_revised}
\begin{split}
        \|\mathcal T_\delta(H,h^{<N})-\mathcal T_\delta(\widetilde H,h^{<N})\|
        &=
        \left\|
        \mathcal S_A^{-1}
        \left(
        \mathcal N_\delta(H)-\mathcal N_\delta(\widetilde H)
        \right)
        \right\| \\
        &\le
        \|\mathcal S_A^{-1}\|_{\rm op}
        \|\mathcal N_\delta(H)-\mathcal N_\delta(\widetilde H)\| \\
        &\le
        \delta
        \|\mathcal S_A^{-1}\|_{\rm op}
        C_1
        \|H-\widetilde H\|.
\end{split}
\end{equation}
Choosing $\delta>0$ sufficiently small so that
\begin{equation}
        \delta
        \|\mathcal S_A^{-1}\|_{\rm op}
        C_1
        <1,
\end{equation}
we obtain that $\mathcal T_\delta(\cdot,h^{<N})$ is a contraction on
$B^1(\Gamma_N^d)$.\\
It remains to show that $\mathcal T_\delta$ maps the closed unit ball into itself. For
$H\in B^1(\Gamma_N^d)$, we have
\begin{equation}\label{ball_revised}
        \mathcal T_\delta(H,h^{<N})
        =
        \mathcal S_A^{-1}\tau_\delta
        +
        \mathcal S_A^{-1}\mathcal N_\delta(H).
\end{equation}
The approximate solution $h^{<N}$ is constructed for the scaled
invariance equation through order $N-1$. Therefore the residual $\tau_\delta$ starts at order $N$, and hence
\begin{equation}
    \tau_\delta\in\Gamma_N^d.
\end{equation}
Moreover, by \eqref{N_f_expand}, the homogeneous term of order
$k$ in the scaled nonlinear part $\delta N_f^\delta$ carries the
factor $\delta^{k-1}$. For completeness, we make the dependence on the scaling parameter $\delta$ explicit. Writing again
\begin{equation}
N_f^\delta(z)
=
\sum_{k\geq 2} B_k^\delta(z,\ldots,z),
\qquad
B_k^\delta=\delta^{k-2}B_k,
\end{equation}
we have, for $0<\delta\leq 1$, the uniform bounds
\begin{equation}
\|B_k^\delta\|\leq \|B_k\|.
\end{equation}
Since $h^{<N}$ is a fixed logarithmic polynomial, multiplication by
$h^{<N}$ defines a bounded operator on $\Gamma_N^d$. Hence, after
restricting the domain if necessary, analyticity of $N_f$ implies that,
for $\|H_i\|_{\Gamma_N^d}\leq R$,
\begin{equation}
\bigl\|
N_f^\delta(h^{<N}+H_1)
-
N_f^\delta(h^{<N}+H_2)
\bigr\|_{\Gamma_N^d}
\leq
C_R\|H_1-H_2\|_{\Gamma_N^d},
\end{equation}
where $C_R$ can be chosen independently of sufficiently small $\delta$.
Multiplication by the prefactor $\delta$ in \eqref{N_delta} therefore gives
\eqref{bound_difference_N_delta}. Moreover, the order-by-order construction of $h^{<N}$ shows inductively that its homogeneous coefficient of order $k$ is of
the form
\begin{equation}
h_k^\delta=\delta^{k-1}\widehat h_k,
\end{equation}
with $\widehat h_k$ independent of $\delta$. Indeed, an order-$k$
contribution arising from an $m$-linear term of $N_f$, evaluated on
homogeneous terms of orders $j_1,\ldots,j_m$ with
$j_1+\cdots+j_m=k$, carries the factor
\begin{equation}
\delta^{m-1}\prod_{i=1}^m\delta^{j_i-1}
=
\delta^{k-1},
\end{equation}
and inversion of the corresponding cohomology operator does not alter
this factor. Since all terms of orders smaller than $N$ cancel by
construction, the residual $\tau_\delta$ therefore starts at order $N$,
and its coefficients of order $k\geq N$ carry a factor $\delta^{k-1}$. The same analytic majorant estimate as above then yields, uniformly for sufficiently small $\delta$,
\begin{equation}\label{bound_residual_revised}
\|S_A^{-1}\tau_\delta\|_{\Gamma_N^d}
\leq
C_\tau\delta^{N-1},
\end{equation}
for some constant $C_\tau>0$. For the nonlinear tail, Lemma \ref{lemma_affine_tail} gives, again with $R=1$,
\begin{equation}
        \|\mathcal N_\delta(H)-\mathcal N_\delta(0)\|
        \le
        \delta C_1\|H\|.
\end{equation}
Since $\mathcal N_\delta(0)=0$, this yields
\begin{equation}\label{tail_bound_revised}
        \|\mathcal S_A^{-1}\mathcal N_\delta(H)\|
        \le
        \delta
        \|\mathcal S_A^{-1}\|_{\rm op}
        C_1
        \|H\|.
\end{equation}
Combining \eqref{ball_revised}, \eqref{bound_residual_revised}, and
\eqref{tail_bound_revised}, we obtain
\begin{equation}
    \|T_\delta(H,h^{<N})\|
    \leq
    C_\tau\delta^{N-1}
    +
    \delta\|S_A^{-1}\|_{\rm op}C_1\|H\|.
\end{equation}
For $H\in B^1(\Gamma_N^d)$, this gives
\begin{equation}
        \|\mathcal T_\delta(H,h^{<N})\|
        \le
        C_{\tau}\delta^{N-1}
        +
        \delta
        \|\mathcal S_A^{-1}\|_{\rm op}
        C_1 .
\end{equation}
By decreasing $\delta>0$ further, if necessary, we can ensure that
\begin{equation}
        C_{\tau}\delta^{N-1}
        +
        \delta
        \|\mathcal S_A^{-1}\|_{\rm op}
        C_1
        \leq 1.
\end{equation}
Thus $\mathcal T_\delta(\cdot,h^{<N})$ maps the closed unit ball
$B^1(\Gamma_N^d)$ into itself.\\
The Banach fixed point theorem now gives a unique fixed point
$H\in B^1(\Gamma_N^d)$. Consequently,
\begin{equation}
        h=h^{<N}+H
\end{equation}
solves the scaled invariance equation. This proves the claim.
\end{proof}

\begin{remark}
Taking $d=s$ in Theorem \ref{mainthm} guarantees the existence of a local change of coordinates around zero that conjugates the full flow map to its linear part in the presence of resonances, provided the specified regularity and spectral properties are satisfied. We further note that, thanks to the scaling in $\delta$ in the fixed-point argument, it is enough that $N\geq 2$, but for the resolution of all the resonances, i.e., the invertibility of $\mathcal{S}_A$, we have to go to higher orders in general. Of course, the order-by-order solution of the invariance equation allows us to go to arbitrarily large $N$. 
\end{remark}

We need another little lemma for the proof of the main theorem in the following section.

\begin{lemma}\label{lemma_coefficient_estimate}
Let $P(y)=\sum_{|q|\leq K}a_qy^q$ be a polynomial on $\mathbb C^M$. For every $\beta,\sigma>0$ there is a constant $C_0>1$, depending only on $M,\beta,$ and $\sigma$, such that
\begin{equation}
 |a_q|\leq C_0^K\|P\|_{K\beta,\sigma},\qquad |q|\leq K.
\end{equation}
\end{lemma}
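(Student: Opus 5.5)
The plan is a Cauchy-type estimate on a polydisc placed inside the strip $\mathfrak{T}_\sigma$, with the center of the polydisc chosen on the real axis so as to balance the exponential weight against the disc radius. Since $P$ is a polynomial, it is entire. For any real point $y_0\in\mathbb R^M$ the closed polydisc $D(y_0,\sigma)=\{y:|y_j-y_{0,j}|\leq\sigma\}$ lies in $\mathfrak{T}_\sigma$. The simplest choice is $y_0=0$. Then Cauchy's inequality on $D(0,\sigma)$ gives
\begin{equation}
|a_q|\leq \sigma^{-|q|}\sup_{D(0,\sigma)}|P|.
\end{equation}

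Next we convert the supremum into the weighted norm. For $y\in D(0,\sigma)$ we have $|\Re y|\leq \sqrt{M}\,\sigma$ (or $M\sigma$, depending on the norm used for $|\Re y|$). By definition of $\|\cdot\|_{K\beta,\sigma}$, this gives
\begin{equation}
|P(y)|\leq e^{K\beta|\Re y|}\|P\|_{K\beta,\sigma}\leq e^{K\beta M\sigma}\|P\|_{K\beta,\sigma}.
\end{equation}
Combining the two estimates yields
\begin{equation}
|a_q|\leq \sigma^{-|q|}e^{K\beta M\sigma}\|P\|_{K\beta,\sigma}.
\end{equation}

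It remains to bound $\sigma^{-|q|}$ by an exponential in $K$. Since $|q|\leq K$, we have $\sigma^{-|q|}\leq \max(1,\sigma^{-1})^K$. So the claim holds with
\begin{equation}
C_0=\max\{2,\sigma^{-1}\}\,e^{\beta M\sigma}>1,
\end{equation}
which depends only on $M,\beta,\sigma$. If the disc radius $\sigma$ makes the constant awkward, one can instead use radius $\min(\sigma,1)$; this only changes $C_0$.

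There is no real obstacle. The only points to verify are that the polydisc is contained in the strip, which holds because its imaginary parts are at most $\sigma$ in each coordinate, and that the weight factor contributes an exponential in $K$ rather than in $|q|$. The latter is why the norm is indexed by $K\beta$: the growth is exactly $C_0^K$. The norm convention for $|\Re y|$ (Euclidean or $\ell^1$) only affects $C_0$ through a factor depending on $M$.
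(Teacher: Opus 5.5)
Your proof is correct and is essentially the paper's argument: a Cauchy estimate on a polydisc centered at the origin inside $\mathfrak{T}_\sigma$, the bound $|P(y)|\leq e^{K\beta|\Re y|}\|P\|_{K\beta,\sigma}$ on that polydisc, and $r^{-|q|}\leq\max\{1,r^{-1}\}^K$ for $|q|\leq K$. The only differences are cosmetic. The paper takes a radius $r_0<\sigma$ and the Euclidean bound $|\Re y|\leq\sqrt{M}\,r_0$. You take radius $\sigma$, which is legitimate because the strip is closed, and the cruder bound $M\sigma$.
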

\begin{proof}
Choose $0<r_0<\sigma$. The closed polydisc $\{|y_j|\leq r_0\}_{j=1}^M$ is contained in $\mathfrak T_\sigma$. On this polydisc, $|\Re y|\leq \sqrt M\,r_0$, and hence
\begin{equation}
 |P(y)|\leq e^{K\beta\sqrt M r_0}\|P\|_{K\beta,\sigma}.
\end{equation}
Cauchy's coefficient estimate on the polydisc gives
\begin{equation}
 |a_q|\leq r_0^{-|q|}e^{K\beta\sqrt M r_0}\|P\|_{K\beta,\sigma}
 \leq \bigl(e^{\beta\sqrt M r_0}\max\{1,r_0^{-1}\}\bigr)^K\|P\|_{K\beta,\sigma}.
\end{equation}
Taking $C_0=e^{\beta\sqrt M r_0}\max\{1,r_0^{-1}\}$ and enlarging it slightly if necessary proves the claim.
\end{proof}

\subsection{Proof of the Main Theorem as well as the stable Hartman conjecture}

In this section, we collect and organize the results from the previous section to complete the proof of the main theorem.  \\

\begin{proof}[Proof of Theorem~\ref{mainthm}]
Choose $N$ sufficiently large so that the logarithmic polynomial
$h^{<N}$ constructed in Proposition~\ref{approx}
contains all resonant indices and such that Proposition~\ref{Lemma_SM}
applies on the tail space $\Gamma_N^d$.
Here, as in Proposition~\ref{propfixedpoint}, $h^{<N}$ denotes the $\delta$-dependent approximate solution of the scaled equation and its homogeneous coefficient of order $k$ is $\delta^{k-1}\widehat h_k$. By Proposition~\ref{propfixedpoint}, after choosing the scaling parameter
$\delta>0$ sufficiently small, there exists a unique
$H\in\Gamma_N^d$ such that $h=h^{<N}+H$ solves the scaled invariance equation. It remains to establish the regularity of the correction $H$. By
construction, all resonant contributions of order smaller than $N$
are contained in $h^{<N}$. Hence, by
Lemma~\ref{lemma_regularity},
$h^{<N}\in C^{r,1-\varepsilon}$ for every
$\varepsilon\in(0,1)$, where $r=r_*$ is the uniform regularity index in Theorem~\ref{mainthm}. We claim that the tail $H$ has the same
regularity.\\
We first note that the fixed-point construction preserves the
polynomial-logarithmic structure used in Proposition~\ref{approx}.
Indeed, start the iteration by defining the fixed point in
Proposition~\ref{propfixedpoint} with $H_0=0$ and write
\begin{equation}
    H_{k+1}
    =
    T_\delta(H_k,h^{<N}).
\end{equation}
Since $h^{<N}$ is a logarithmic polynomial, analyticity of
$N_f^\delta$ implies that, at each fixed order $n$, the coefficient
of $x^n$ in $N_f^\delta(h^{<N}+H_k)$ is a polynomial in the
logarithmic variables and depends only on finitely many coefficients
of $h^{<N}$ and $H_k$. Moreover, the support condition of
Lemma~\ref{lemma_regularity} is preserved under products and sums.
Proposition~\ref{Lemma_SM} acts coefficient-wise and does not
introduce any additional logarithmic variables. Consequently, each
$H_k$ has an expansion
\begin{equation}
    H_k(x)
    =
    \sum_{|n|\geq N} H_{k,n}(\eta(x))x^n,
    \label{polynomial-tail-iterates}
\end{equation}
where the $H_{k,n}$ are polynomials satisfying the same support
condition as in Proposition~\ref{approx}.\\ 
More precisely, write
\begin{equation}
    H_{k,n}(\eta)
    =
    \sum_q a_{k,n,q}\eta^q .
\end{equation}
If $\eta_\ell$ was introduced at the resonant multi-index
$m^{(\ell)}$, then
\begin{equation}
    a_{k,n,q}\neq0
    \quad\Longrightarrow\quad
    n_j\geq
    \sum_{\ell=1}^M q_\ell m_j^{(\ell)},
    \qquad 1\leq j\leq d.
    \label{log-support-multiplicity}
\end{equation}
Indeed, this property holds for the approximate solution
$h^{<N}$ and is preserved under multiplication and addition.
Moreover, Proposition~\ref{Lemma_SM} only applies shifts in the
logarithmic variables and therefore can lower, but cannot increase,
the powers of the logarithmic variables. Hence
\eqref{log-support-multiplicity} is preserved by the fixed-point
iteration. Since $r\le r(m^{(\ell)})$ for every $\ell$, and the exponents $\nu_j$ were chosen in Proposition~\ref{approx} to satisfy the support and largeness conditions used in Lemma~\ref{lemma_regularity}, that lemma applies with regularity index $r$ to each factor $x^{m^{(\ell)}}\eta_\ell(x)$.
The order-by-order argument in the proof of
Proposition~\ref{approx} also gives
\begin{equation}
    \deg H_{k,n}\leq |n|-1.
    \label{tail-degree-bound}
\end{equation}
Since $H_k\to H$ in $\Gamma_N^d$, and for every fixed $n$ the polynomials satisfying \eqref{tail-degree-bound} form a finite-dimensional closed subspace of the corresponding coefficient space, the limiting coefficient $H_n$ is again a polynomial.
Therefore
\begin{equation}
    H(x)
    =
    \sum_{|n|\geq N} H_n(\eta(x))x^n,
    \qquad
    \deg H_n\leq |n|-1,
    \label{polynomial-tail}
\end{equation}
and the coefficients of $H_n$ satisfy
\eqref{log-support-multiplicity}.\\
We now prove that this series converges in $C^{r,\alpha}$ after possibly
restricting the domain. Fix $\alpha\in(0,1)$ and write
\begin{equation}
    H_n(\eta)
    =
    \sum_{|q|\leq |n|-1} a_{n,q}\eta^q.
\end{equation}
Applying Lemma~\ref{lemma_coefficient_estimate} with $K=|n|$, and noting that $\deg H_n\leq |n|-1$, gives a constant $C_0>1$, independent of $n$ and $q$, such that
\begin{equation}
    |a_{n,q}|
    \leq
    C_0^{|n|}
    \|H_n\|_{|n|\beta,\sigma}.
    \label{coefficient-bound}
\end{equation} For a coefficient $a_{n,q}\neq0$, set
\begin{equation}
    p_{n,q}
    :=
    n-\sum_{\ell=1}^M q_\ell m^{(\ell)}
    \in\mathbb{N}^d,
\end{equation}
which is well-defined by \eqref{log-support-multiplicity}. We may
then factor
\begin{equation}
    x^n\eta(x)^q
    =
    x^{p_{n,q}}
    \prod_{\ell=1}^M
    \left(
        x^{m^{(\ell)}}\eta_\ell(x)
    \right)^{q_\ell}.
    \label{tail-factorization}
\end{equation}
By Lemma~\ref{lemma_regularity},
\begin{equation}
    x^{m^{(\ell)}}\eta_\ell(x)
    \in C^{r,\alpha}
\end{equation}
for every $\ell$. Moreover, the estimates in the proof of
Lemma~\ref{lemma_regularity} show that
\begin{equation}
    \bigl\|
        x^{m^{(\ell)}}\eta_\ell(x)
    \bigr\|_{C^{r,\alpha}(B_\rho^d)}
    \longrightarrow0,
    \qquad
    \rho\longrightarrow0.
    \label{log-factor-small}
\end{equation}
Since $C^{r,\alpha}(B_\rho^d)$ is a Banach algebra, the
factorization \eqref{tail-factorization}, together with the
elementary estimates for the polynomial factor $x^{p_{n,q}}$,
implies that, after choosing $\rho>0$ sufficiently small, there
exist constants $C>0$ and $\vartheta\in(0,1)$, independent of
$n$ and $q$, such that
\begin{equation}
    \bigl\|
        x^n\eta(x)^q
    \bigr\|_{C^{r,\alpha}(B_\rho^d)}
    \leq
    C\vartheta^{|n|}.
    \label{monomial-tail-estimate}
\end{equation}
Indeed, taking absolute values in \eqref{tail-factorization} gives
\begin{equation}
    |n|
    =
    |p_{n,q}|
    +
    \sum_{\ell=1}^M q_\ell |m^{(\ell)}|.
\end{equation}
Let
\begin{equation}
    m_*:=\max_{1\leq \ell\leq M}|m^{(\ell)}|.
\end{equation}
Hence, for every pair $(n,q)$ with $a_{n,q}\neq0$, either
\begin{equation}
    |p_{n,q}|\geq \frac{|n|}{2},
\end{equation}
or
\begin{equation}
    \sum_{\ell=1}^M q_\ell |m^{(\ell)}|
    \geq \frac{|n|}{2},
\end{equation}
which implies
\begin{equation}
    |q|
    =
    \sum_{\ell=1}^M q_\ell
    \geq
    \frac{|n|}{2m_*}.
\end{equation}
In the first case, the polynomial factor $x^{p_{n,q}}$ yields
exponential decay in $|n|$ on a sufficiently small ball
$B_\rho^d$, while the polynomial losses in $|p_{n,q}|$ arising from
taking at most $r$ derivatives are absorbed by this exponential
decay after decreasing $\rho$ if necessary. In the second case,
\eqref{log-factor-small} and the Banach-algebra property of
$C^{r,\alpha}(B_\rho^d)$ give exponential decay through the factor
\begin{equation}
    \prod_{\ell=1}^M
    \bigl(x^{m^{(\ell)}}\eta_\ell(x)\bigr)^{q_\ell}.
\end{equation}
Consequently, after restricting $\rho>0$ further if necessary,
there exist constants $C>0$ and $\vartheta\in(0,1)$, independent
of $n$ and $q$, such that
\begin{equation}
    \bigl\|
        x^n\eta(x)^q
    \bigr\|_{C^{r,\alpha}(B_\rho^d)}
    \leq
    C\vartheta^{|n|}.
\end{equation}
Combining \eqref{coefficient-bound} and
\eqref{monomial-tail-estimate}, and decreasing $\rho$ further if
necessary so that the decay in \eqref{monomial-tail-estimate}
dominates the factor $C_0^{|n|}$ as well as the number of
monomials of degree at most $|n|-1$, we obtain a constant
$C_\rho>0$ such that
\begin{equation}
    \bigl\|
        H_n(\eta(\,\cdot\,))(\,\cdot\,)^n
    \bigr\|_{C^{r,\alpha}(B_\rho^d)}
    \leq
    C_\rho
    \|H_n\|_{|n|\beta,\sigma},
    \qquad |n|\geq N.
    \label{tail-regularity-estimate}
\end{equation}
Consequently,
\begin{align}
    \|H\|_{C^{r,\alpha}(B_\rho^d)}
    &\leq
    C_\rho
    \sum_{|n|\geq N}
        \|H_n\|_{|n|\beta,\sigma}
    \notag\\
    &=
    C_\rho\|H\|_{\Gamma_N^d}
    <\infty.
    \label{tail-holder-bound}
\end{align}
Hence $H\in C^{r,\alpha}(B_\rho^d)$. Since
$\alpha\in(0,1)$ was arbitrary, we conclude that
\begin{equation}
    H\in C^{r,1-},
    \qquad
    h=h^{<N}+H\in C^{r,1-}.
\end{equation}
Finally, $Dh(0)=\iota_{X_1}$, so the constant-rank theorem implies,
after restricting the domain if necessary, that $h$ is an embedding
and its image is an $f$-invariant manifold tangent to $X_1$ at the
origin. In the full-dimensional case $d=s$, $Dh(0)=I$, and the
inverse-function theorem yields a local conjugacy to the linearized
dynamics.
\end{proof}

Corollary~\ref{cor_Hartman} follows by taking $d=s$ in Theorem~\ref{mainthm} and applying the inverse-function theorem, as in the final paragraph of the proof.

%\textcolor{red}{At the first resonant index number $n_{\rm min}$, defined in \eqref{defNresnures}, Lemma \ref{shifted} applies and for each resonant index and we add $\Delta_{d-1,1} = d$ dimensions for each resonant index with index number $N_{\rm res}$.  This gives both property $(2)$ and $(3)$ in Theorem \ref{mainthm}. It follows that even if the map $f$ is analytic, the family of invariant manifolds guaranteed to exist by Theorem \ref{mainthm} will only be finitely-differentiable in the presence of resonances. }

\section{Examples and Counterexamples}\label{examples}

In this section, we present some examples and counterexamples that illustrate the assumptions and conclusions of our main theorems.

%\begin{example}	\begin{equation}	\begin{cases}	\dot{x}=-ax,\\	\dot{y}=-by,	\end{cases}	\end{equation}	for $a,b>0$, $x,y:\mathbb{R}\to\mathbb{R}$, with solution $\Big(x(t),y(t)\Big)=(x_0e^{-at},y_0e^{-bt})$. For $x_0\neq 0$, any trajectory runs on one of the invariant manifolds	\begin{equation}	y_q(x_0)=y_0\left(\frac{x}{x_0}\right)^q,	\end{equation}	depending only on the quotient $q=\frac{b}{a}$. If we are seeking a Taylor expansion of $y_q$ in $x_0$, i.e., assume that $y(t)=h(x(t))$, with $h(x)=h_2x^2+h_3x^3+\mathcal{O}(x^4)$, we obtain the equation	\begin{equation}\label{hn}	(na-b)h_n=0,	\end{equation}	for the $n^{th}$ coefficient $h_n$. In particular, in the absence of resonances, $h_n=0$ for all $n\geq 0$.\\	If, however, $q=N\in\mathbb{N}$, then equation \eqref{hn} has an additional solution and we can find a one-parameter family of solutions to the invariant equation of the form	\begin{equation}\label{yN}	y_N(x_0)=y_0x_0^{-N}x^N.	\end{equation}	Note that the invariant manifold $y_N$ in \eqref{yN} is also analytic and tangent to the $x$-axis.\\	The same reasoning can be applied if $q\in\mathbb{Q}$ and $y$ is expanded in a Puiseux series $y=h(x)=\sum_{n\geq n_0}h_nx^{\frac{n}{p}}$.\end{example}

\begin{example}
This example illustrates the simplest resonant system exhibiting logarithmic
terms. Consider the planar differential equation
\begin{equation}\label{explres}
\begin{cases}
\dot x=-x,\\
\dot y=-2y+x^2,
\end{cases}
\end{equation}
whose eigenvalues satisfy a $(1:2)$ resonance relation. The system can be solved explicitly. For the initial condition
$(x(0),y(0))=(x_0,y_0)$ one obtains
\begin{equation}\label{sol1}
\begin{split}
x(t)&=x_0e^{-t},\\
y(t)&=\bigl(y_0+x_0^2t\bigr)e^{-2t}.
\end{split}
\end{equation}
Eliminating the time variable via
\begin{equation}
t=-\log\!\left(\frac{x}{x_0}\right)
\end{equation}
gives the family of solution curves
\begin{equation}\label{soly1}
y=
\left[
y_0
-
x_0^2
\log\!\left(\frac{x}{x_0}\right)
\right]
\left(\frac{x}{x_0}\right)^2,
\end{equation}
or equivalently,
\begin{equation}
y
=
\frac{y_0}{x_0^2}x^2
-
x^2\log\!\left(\frac{x}{x_0}\right).
\end{equation}
Thus every trajectory with $x_0\neq0$ approaching the equilibrium, when represented as a graph over $x$, contains the resonant term
$x^2\log x$. The coefficient of the quadratic polynomial part depends on the
initial condition, whereas the logarithmic coefficient is fixed by the resonance. Differentiating twice shows that any non-trivial solution curve is necessarily of class
$C^{1,\alpha}$ for every $\alpha<1$, but not $C^2$ at the
equilibrium, in complete agreement with the regularity predicted by the main theorem.
\end{example}

\begin{example}
Let us illustrate the non-uniqueness of the resonant invariant manifolds in a little more detail. Consider the following two-dimensional dynamical system:
\begin{equation}\label{systexpl1}
\begin{cases}
\dot{x}=-x+y^2,\\
\dot{y}=-3y + a x^2+ b x^3,
\end{cases}
\end{equation}
for some parameters $a,b \in\mathbb{R}$, whose linear part has a $1:3$ resonance.\\
For $b\neq 0$, the Taylor coefficient of $x^3$ in the $y$-dynamics is non-zero and we can capture any local resonant invariant manifold only by expansion in logarithmic polynomials. For $b=0$, we will see that the logarithmic approximation reduces to an ordinary polynomial approximation. Since the eigenspace associated to the slow $x$-direction is just the $x$-axis, we can assume that the invariant manifold is a graph for $x$ small enough and we can thus expand $y$ as a logarithmic polynomial of the form
\begin{equation}\label{hexpl1}
\begin{split}
y(t)&=:h(x(t))\\
&=\sum_{n=1}^N h_n p_n(\log(x)) x^n(t)+\mathcal{O}(x(t)^{N+1}),
\end{split}
\end{equation}
for any $N\in \mathbb{N}$, $h_n\in \mathbb{R}$ and some real-valued polynomials $p_n$. Note that we only seek an expansion in the non-negative $x$-direction in \eqref{hexpl1}.\\
 Comparison of powers shows that
\begin{equation}\label{yexpl}
\begin{split}
h(x)&=a x^2  +  (-b \log(x) + \sigma)x^3  + a^3 x^5\\
&\quad+ \left(\frac{1}{9} a^2 (21 \sigma+4 b)-\frac{7}{3} a^2 b \log (x)\right) x^6\\
&\quad +\frac{a}{8}  \left(b^2+4 b \sigma +4 b \log (x) (4 b \log (x)-b-8 \sigma )+16
\sigma ^2\right)x^7\\
&\quad +\mathcal{O}(x^8),
\end{split}
\end{equation}
for any $\sigma\in\mathbb{R}$, defines an approximation to an invariant manifold. Note that the fifth-order term in the expansion \eqref{yexpl} does not contain any logarithmic terms, even though the third order term does. 
%In fact, the logarithmic terms only enter at multiplies of three. 
At zero, all the approximate invariant manifolds obtained above have the same tangency. If $b=0$, we see that, indeed, all logarithmic terms disappear. For $a=0$, we see that the higher-order terms, i.e., larger than order three, disappear. Figure \ref{plot1} illustrates the approximations \eqref{yexpl} for different values of $\sigma$ as well as the polynomial approximation up to order two.
\begin{figure}
	\includegraphics[scale=1.2]{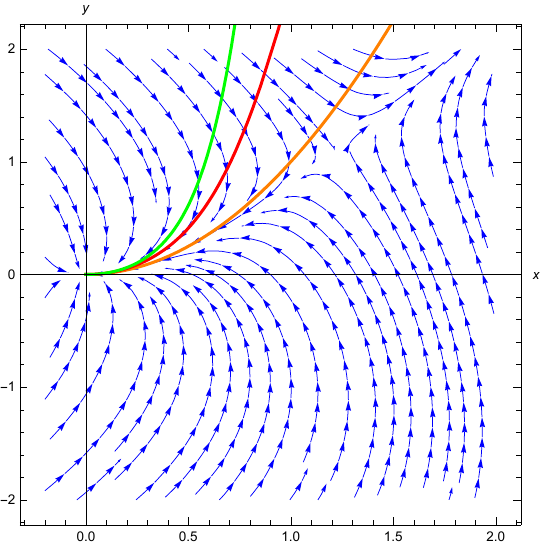}\caption{Phase portrait of system \eqref{systexpl1} for parameter values $a=b=1$ together with the approximate invariant manifolds \eqref{yexpl} for $\sigma=0$ (red) and for $\sigma=1$ (green). The orange line shows the polynomial approximation $h(x)=x^2$ $(N=2)$. }
	\label{plot1}
\end{figure}
\end{example}

\begin{example}
    The following example illustrates that the $\alpha$-H\"{older} regularity guaranteed by the main theorem can, in general, not be improved. Consider the following three-dimensional differential equation,
    \begin{equation}\label{system_example}
        \begin{cases}
            \dot{x} & = - x,\\
            \dot{y} & = -2y,\\
            \dot{z} & = -3z + xy,
         \end{cases}
    \end{equation}
with explicit solution
\begin{equation}\label{solution}
\begin{split}
    x(t) & = x_0 e^{-t},\\
    y(t) & = y_0 e^{-2t},\\
    z(t) & = e^{-3t}(t x_0 y_0 + z_0). 
\end{split}
\end{equation}
Solving the first two equations in \eqref{solution} for the time gives 
\begin{equation}
    t = -\log\left(\frac{x}{x_0}\right) = -\frac{1}{2}\log\left(\frac{y}{y_0}\right),
\end{equation}
which implies the representation of time as one of the basis functions 
\begin{equation}\label{rep_time}
    -\frac{1}{2}\log(|x|^2+|y|)
    =t-\frac{1}{2}\log(|x_0|^2+|y_0|),
\end{equation}
The general form of an invariant manifold using the representation of time \eqref{rep_time}, conjugated to the $(x,y)$-dynamics, which are already linear in this example, is thus given by\begin{equation}
\label{equationz}
z = xy \Big(A -\frac{1}{2} \log(|x|^2+|y|) \Big),
\end{equation}
for any $A\in \mathbb{R}$. Figure \ref{plotz} shows the function \eqref{equationz} along with its derivatives. \\
Similarly, the flow map \eqref{solution} can be written as 
\begin{equation}
    (x,y,z)(t;x_0,y_0,z_0) = h(e^{-t}x_0,e^{-2t}y_0,e^{-3t}z_0),
\end{equation}
for the conjugacy map
\begin{equation}\label{h}
    h(x,y,z) = \left(\begin{array}{c}
        x\\ 
        y\\
        -\frac{1}{2}\log(|x^2|+|y|)xy + z
    \end{array}\right).
\end{equation}
Indeed, the functional form of \eqref{h} is guaranteed by our main theorem and the successive approximations in terms of logarithmic polynomials. In particular, we can read off the $C^{1,\alpha}$-regularity for any $\alpha\in(0,1)$ from \eqref{h} directly.\\
Let us show that the $C^{1,\alpha}$-regularity cannot be improved. More generally, a conjugacy map for system \eqref{system_example} of the form relevant to the above construction is given by
\begin{equation}
    h(x,y,z) = \left(\begin{array}{c}
        x\\ 
        y\\
        T(x,y) xy + z
    \end{array}\right),
\end{equation}
for a function $T:U\to\mathbb{R}$, defined on some open neighborhood $U\subset\mathbb{R}^2$ of the origin such that
 \begin{equation}
    T(x_0e^{-t},y_0e^{-2t}) = T(x_0,y_0)+t,\quad t\geq 0. 
\end{equation}
In particular, any such $T$ must be unbounded at the origin and thus the mixed $(x,y)$-derivative of the function $g(x,y) = xy T(x,y)$ cannot be bounded at the origin. Indeed, 
\begin{equation}
    g(e^{-t}x,e^{-2t}y)
=
e^{-3t}\bigl(g(x,y)+txy\bigr).
\end{equation}
Assume, for contradiction, that $g$ extends to a $C^2$ function in a neighborhood of the origin. Differentiating the preceding identity once with respect to $x$ and once with respect to $y$ gives
\begin{equation}
    e^{-3t}g_{xy}(e^{-t}x,e^{-2t}y)
=
e^{-3t}\bigl(g_{xy}(x,y)+t\bigr),
\end{equation}
and hence
\begin{equation}
g_{xy}(e^{-t}x,e^{-2t}y)
=
g_{xy}(x,y)+t.
\end{equation}
For any fixed $(x,y)$ with $xy\neq0$, the point $(e^{-t}x,e^{-2t}y)$ converges to the origin as $t\to\infty$, whereas the right-hand side diverges linearly in $t$.
 This contradicts the boundedness of $g_{xy}$ near the origin. Therefore, for every solution $T$ of the cohomological equation, the function $xyT(x,y)$ fails to extend as a $C^2$ function at the origin.

%Clearly, for any $(x_0,y_0,z_0)\in\mathbb{R}^3$, we can find an $A\in\mathbb{R}$ (choosing $B=0$) such that \eqref{equationz} holds, thus showing that the parametrization \eqref{equationz} indeed exhausts the whole phase space. 
\begin{figure}
    \centering
    \includegraphics[width=0.45\linewidth]{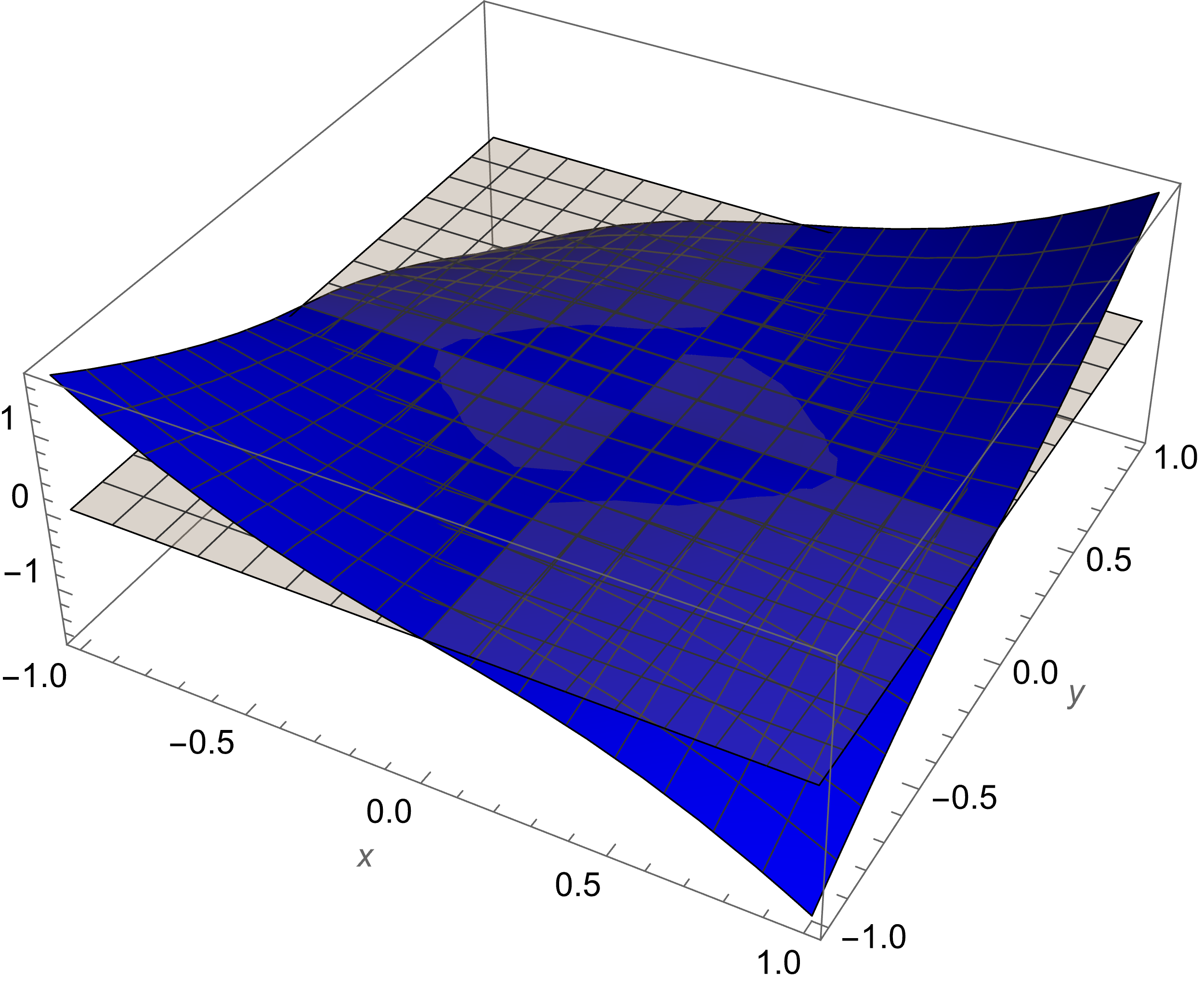}
    \includegraphics[width=0.45\linewidth]{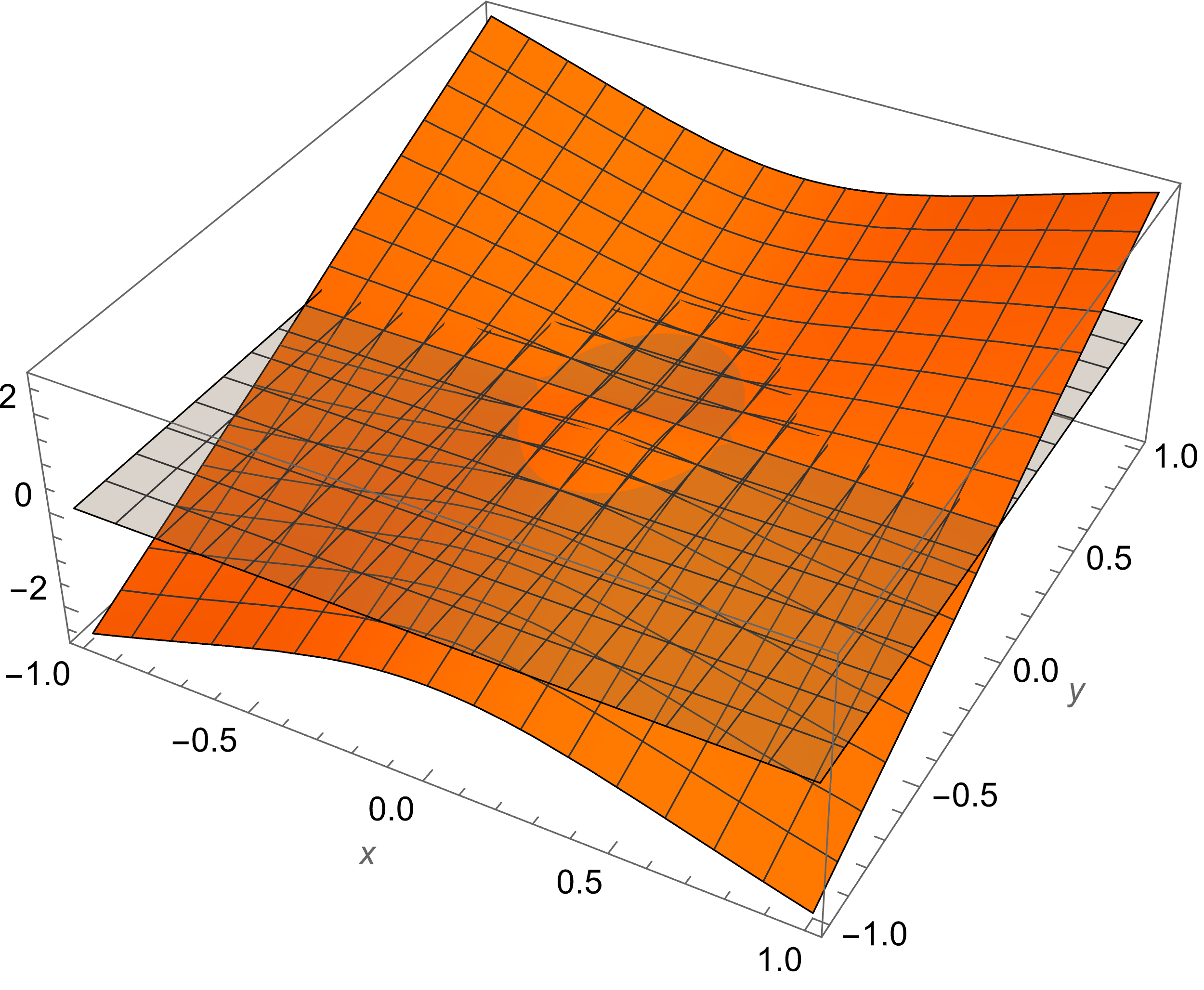}
    \includegraphics[width=0.45\linewidth]{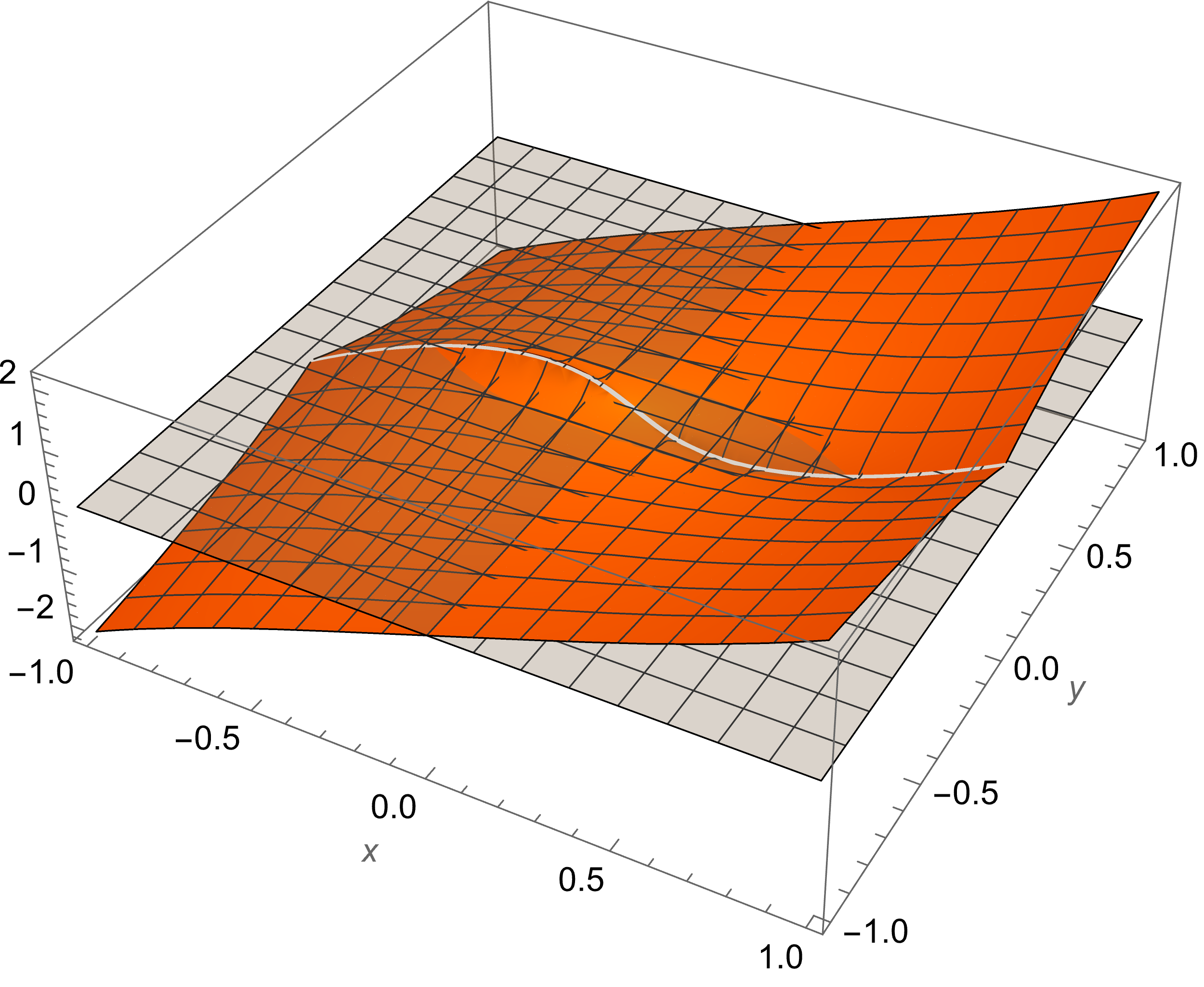}
    \includegraphics[width=0.45\linewidth]{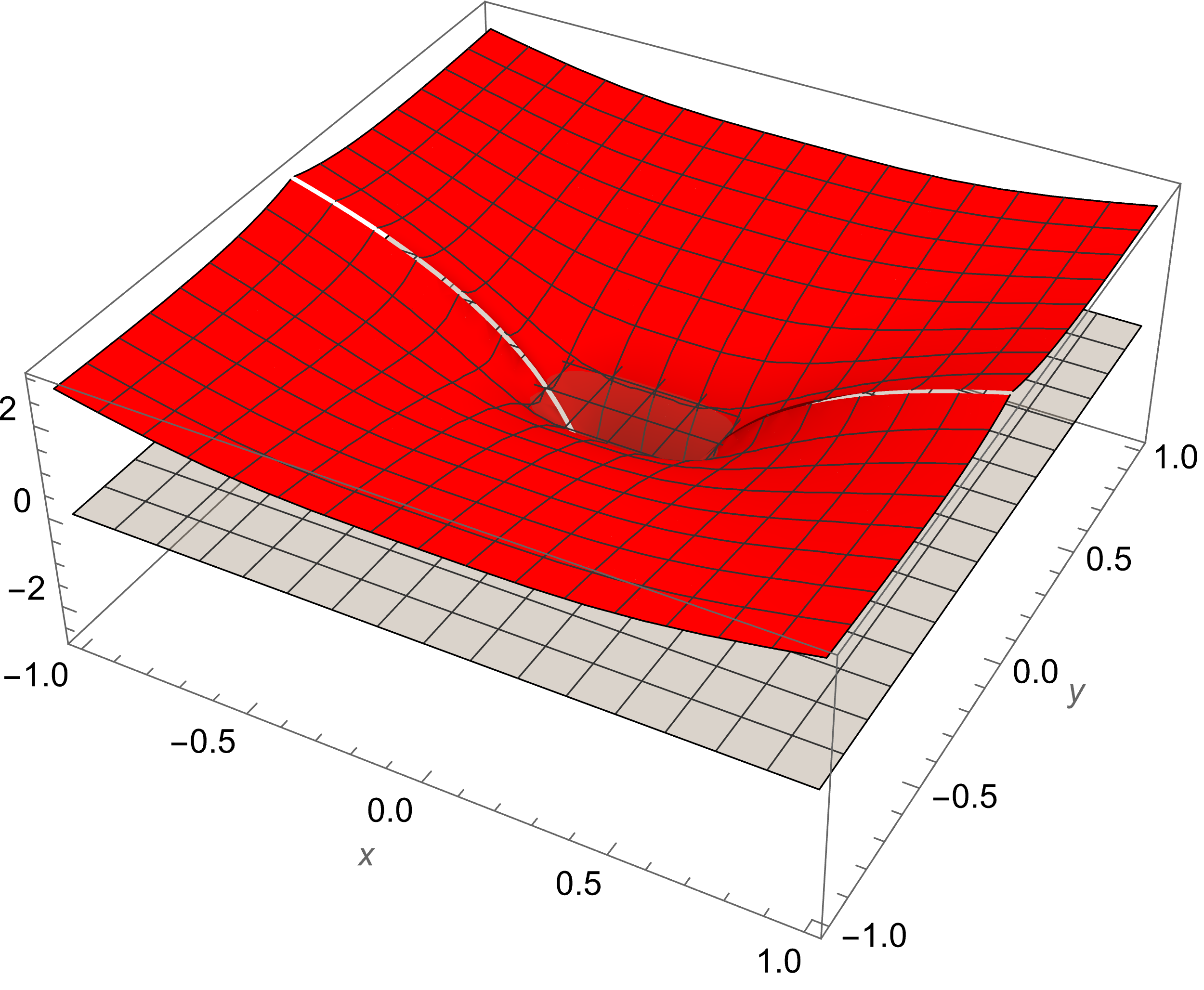}
    \caption{The invariant manifold \eqref{equationz} (blue) with its first derivatives (orange) and its $xy$-derivative (red), showing that the resonant manifold is, indeed, $C^{1,\alpha}$, but not $C^2$. }
    \label{plotz}
\end{figure}
\end{example}

%\begin{example} We have a $1:3$ resonance, but the right-hand side is only $C^2$.\end{example}

\section{Conclusion and Further Perspectives}\label{conclusion}

We have proved the existence and regularity of stable invariant manifolds and linearization maps for analytic dynamical systems near a hyperbolic fixed point in the presence of resonances, including external resonances.  The main conclusion is that external resonances need not be viewed only as obstructions to the invariance equation.  Rather, they indicate that the usual Taylor class is too restrictive.  Once ordinary polynomial expansions are replaced by logarithmic-polynomial expansions, the resonant cohomological equations can be solved recursively to arbitrary finite order and the resulting approximate solution can be completed by a fixed-point argument.\\

This gives a precise description of the regularity loss caused by resonance.  In the non-resonant case one recovers uniqueness and analyticity of the invariant manifold \cite{CABRE2005444}.  In the externally resonant case one generally obtains non-unique invariant manifolds and conjugacy maps of finite differentiability, even for analytic maps. The resonant indices determine a uniform spectrum-based regularity guarantee, yielding $C^{r,1-\varepsilon}$ regularity for every $\varepsilon\in(0,1)$. When the corresponding resonant forcing coefficient is nonzero, the examples show that the resulting logarithmic loss can be intrinsic, while if that coefficient vanishes, higher regularity may occur.  Logarithmic terms are therefore the natural local signature of the external resonance mechanism.\\

The proof separates the algebraic and analytic aspects of the problem.  The algebraic step consists of solving the resonant cohomology equations order by order in a logarithmic-polynomial algebra.  The analytic step uses the resulting approximate solution as the starting point for a contraction argument in a suitable H\"{o}lder space.  This separation makes the method explicit and, in principle, algorithmic.  It suggests computational procedures for resonant invariant manifolds, resonant spectral submanifolds, and local conjugacy maps beyond the non-resonant regime.\\

Several extensions are natural.  First, one should remove some simplifying assumptions of the present work.  Although we have formulated the results in the analytic category, the resonance mechanism itself is not essentially analytic, and analogous statements should hold under sufficiently high finite smoothness.  Similarly, the assumption that the linear part is semisimple should be relaxed.  For hyperbolic matrices with Jordan blocks, the nilpotent part of the linear dynamics is expected to generate additional polynomial factors, which should interact with the logarithmic terms produced by resonances.  A complete treatment of this case would give a more general local normal form for resonant invariant manifolds.\\

Another direction is the extension from the stable setting to mixed hyperbolic spectra and to the full Hartman conjecture as well as the Van Strien conjecture \cite{vanstrien1990smooth}.  Stable and unstable spectra are formally related by inversion, but a simultaneous treatment of stable and unstable directions is more delicate \cite{hartman1960lemma,hartman1960local,newhouse2017hartman,pugh1969theorem}.  In particular, internal and external resonances may interact with both forward and backward dynamics, and some resonant terms may be absorbed into the reduced dynamics while others remain transverse obstructions.  A logarithmic-polynomial version of local conjugacy theory could provide a new approach to optimal regularity questions for Hartman--Grobman conjugacies and invariant foliations \cite{bates2000invariant,hasselblatt1994periodic,pugh1997holder,luZhangZhang2017differentiability}.\\

The construction also suggests applications in areas where resonant invariant manifolds play an organizing role.  In celestial mechanics, stable and unstable manifolds of libration-point dynamics are central in the design of low-energy transfers and in the study of resonance transitions in the restricted three-body problem \cite{koon2000heteroclinic,koon2001low,koon2011dynamical,gomez2004connecting}.  Logarithmic-polynomial parameterizations may provide more accurate local descriptions near resonant libration-point orbits and normally hyperbolic structures.  In chemical reaction dynamics, normally hyperbolic invariant manifolds and their stable and unstable manifolds organize phase-space transport and transition-state theory \cite{wiggins1994normally,komatsuzaki2001dynamical,waalkens2004direct,ezra2009microcanonical}. Since resonances affect the local normal forms near saddle-type equilibria, the present framework may help describe the asymptotic structure of resonant transition-state manifolds.\\

Another important application is nonlinear model reduction for mechanical systems.  Spectral submanifolds provide a rigorous invariant-manifold interpretation of nonlinear normal modes and an exact reduced dynamics near equilibria or periodic orbits \cite{shaw1991normal,shaw1993normal,haller2016nonlinear}.  Existing SSM methods rely heavily on polynomial parameterizations and on non-resonance or spectral-quotient conditions \cite{szalai2017nonlinear,breunung2018explicit,jain2018exact,ponsioen2020model,cenedese2022data}.  The present results suggest a resonant extension of this framework in which external resonances are handled by enlarging the parameterization class to include logarithmic terms.  This could lead to effective reduced-order models in regimes where classical polynomial SSM expansions fail or lose differentiability.\\

Finally, it would be interesting to extend the theory to functional differential equations and other infinite-dimensional systems.  Delay equations possess finite-dimensional invariant manifolds associated with spectral subspaces of the linearized semi-flow, and resonances already play a central role in their normal-form theory \cite{hale1993introduction,dieckmann1995delay,faria1995normal}.  At the same time, the Banach-space setting contains genuinely infinite-dimensional obstructions: even for contractions, $C^1$ linearization can fail without additional assumptions \cite{rodrigues2004linearization,rodrigues2005invertible,rodrigues2012known}.  A useful long-term goal is therefore to distinguish finite-dimensional resonance effects, which may be resolved by logarithmic-polynomial expansions, from obstructions caused by the spectrum and geometry of the ambient infinite-dimensional space.\\

Overall, the results show that external resonances identify the asymptotic structure that replaces analyticity.  Once this structure is incorporated into the functional setting, existence and optimal regularity become accessible, opening a path toward a broader theory of resonant invariant manifolds with applications to local linearization, celestial mechanics, chemical dynamics, functional differential equations, and nonlinear model reduction.\\

\section{Acknowledgments}
The authors declare no conflict of interest.

\bibliographystyle{abbrv}
\bibliography{DynamicalSystems}

\appendix

\section{Relation To Flow Maps of Differential Equations}\label{sec_appendix}
As we prefer to formulate our results in terms of maps, we will briefly recall the relation to differential systems in this section, which corresponds to studying the flow map for a conveniently chosen time $T$. 
Consider the dynamical system
\begin{equation}\label{dynsyst}
\dot{x}=\mathcal{X}(x), 
\end{equation}
defined on an open subset $U\subseteq\mathbb{R}^s$ containing the origin for an analytic vector field $\mathcal{X}$ such that $\mathcal{X}(0)=0$. We write
\begin{equation}\label{dyn}
\mathcal{X}(x)=\mathcal{B}x+\mathcal{N}(x),
\end{equation}
where $\mathcal{B}=D\mathcal{X}(0)$ and $\mathcal{N}(x)=\mathcal{O}(|x|^2)$. For each $t\geq 0$, equation \eqref{dyn} defines an analytic flow map $F^t:U\mapsto \mathbb{R}^s$ such that
\begin{equation}\label{time1} 
 F^t(0) = 0; \quad DF^t(0)=e^{\mathcal{B}t},
\end{equation}
i.e., here we have that $A=e^{\mathcal{B}t}$ in the notation of \eqref{defA}.\\
We will show in the following that the results on maps imply the results for flows under the same hypotheses. We note, however, that maps appear in other applications besides the time-one maps of a differential equation as well, e.g., as return maps to a surface of section. Because of \eqref{time1}, a map with $\det DF(0) < 0$ cannot appear as a time-one map of a differential equation, which implies that the results for maps are, in principle, more general.\\
The main hypotheses in the theorem are twofold: regularity and resonances. Clearly, the regularity of the flows implies the regularity of the time-$T$-maps. As for the resonances, we recall that if  the eigenvalues of $\mathcal{B}$ are $\mu_1, \ldots, \mu_D$, the eigenvalues of $\exp( T \mathcal{B})$ are $\lambda_1 = \exp(T \mu_1), \ldots, \lambda_D = \exp(T\mu_D)$. Denoting $\mu=(\mu_1,..,\mu_D)$, a resonance for the vector field occurs whenever
\begin{equation} \label{flowresonance} 
\mu \cdot n  = \tilde \mu,  
\end{equation}
for some $n \in \mathbb{N}^D$ with $|n|\geq2$ and some $\tilde \mu\in \{\mu_1,...\mu_D\}$. On the other hand, a resonance for the time $T$ map occurs when 
\begin{equation} \label{mapresonance} 
\lambda^n = \tilde \lambda,
\end{equation}
for some $n \in \mathbb{N}^D$ with $|n|\geq2$ and some $\tilde \lambda \in \{\lambda_1,...,\lambda_D\}$.\\
Since $\lambda^n =  \exp(T \mu \cdot n)$, a resonance for the flow implies a resonance for the time-$T$-map for any $T\geq 0$.  As for the converse, we note that a resonance for the time-$T$-map does not necessarily imply the existence of a resonance for the flow \eqref{flowresonance}. Only the weaker equation $ T \mu \cdot n  =  T\tilde \mu + 2 \pi \ri k$ for some $k \in \mathbb{Z}$ holds. 
Hence, for a fixed $T$, the time-$T$ map may contain additional resonances that are not resonances of the vector field. These additional relations arise only when
\begin{equation}
T(\mu\cdot n-\widetilde\mu)\in 2\pi\ri\mathbb{Z}\setminus\{0\}.
\end{equation}
For a fixed finite collection of multi-indices, the exceptional values of $T$ satisfying one of these relations form a discrete set. We may therefore choose $T>0$ outside this set, so that, up to the finite order relevant for the construction, the resonances of the time-$T$ map are exactly those inherited from the flow. \\

\begin{remark}
Let $F^t$ denote the local flow generated by an analytic vector field with linearization $\mathcal{B}$ at the origin, so that $DF^t(0)=e^{t\mathcal{B}}$. Suppose that, for some $T>0$, a local $C^{r,\alpha}$-diffeomorphism $h$, normalized to $Dh(0)=I$, conjugates the time-$T$ map to its linearization,
\begin{equation}
h\circ F^T=e^{T\mathcal B}\circ h.
\end{equation}
Then this conjugacy can be promoted to a conjugacy of the full local flow. Indeed, define
\begin{equation}
\widetilde h(x)
:=
\frac{1}{T}\int_0^T e^{-t\mathcal{B}}h(F^t(x))\,dt.
\end{equation}
The map
\begin{equation}
t\mapsto e^{-t\mathcal B}h(F^t(x))
\end{equation}
is $T$-periodic by the time-$T$ conjugacy. Hence, for all $s$ for which the local flow is defined,
\begin{equation}
\widetilde h(F^s(x))
=
\frac{e^{s\mathcal{B}}}{T}
\int_s^{T+s} e^{-t\mathcal{B}}h(F^t(x))\,dt
=
e^{s\mathcal{B}}\widetilde h(x).
\end{equation}
Moreover,
\begin{equation}
D\widetilde h(0)
=
\frac{1}{T}\int_0^T
e^{-t\mathcal{B}}Dh(0)DF^t(0)\,dt
=
I.
\end{equation}
Therefore, after restricting the domain if necessary, $\widetilde h$ is a local $C^{r,\alpha}$-diffeomorphism. Thus a $C^{r,\alpha}$ conjugacy of a time-$T$ map yields a $C^{r,\alpha}$ conjugacy of the full local flow.
\end{remark}

\medskip
\noindent
%Let us also note that if there exists a \emph{unique} invariant manifold $M$ for the flow map $f^T$ for a specific time $T$ under some assumptions, the manifold $M$ is actually invariant for all times. Indeed, thanks to the flow property, we have that\begin{equation}f^T(f^t(M))=f^t(f^T(M))\end{equation}which shows that $f^t(M)$ is an invariant manifold of $f^T$ as well. Uniqueness of $M$ then implies that $f^t(M)\subseteq M$ for all $t$ sufficiently small. 

\end{document}